\definecolor{note}{rgb}{0,0,1}  
\newtheorem{theorem}{Theorem}
\newtheorem{definition}[theorem]{Definition}
\newtheorem{proposition}[theorem]{Proposition}
\newtheorem{lemma}[theorem]{Lemma}
\newtheorem{corollary}[theorem]{Corollary}
\newtheorem{remark}[theorem]{Remark}
\newtheorem{conjecture}[theorem]{Conjecture}
\newtheorem{condition}[theorem]{Condition}
\newtheorem{convention}[theorem]{Convention}
\numberwithin{equation}{section}
\numberwithin{theorem}{section}
\newcommand{\op}{\operatorname}
\newcommand{\s}{\vskip.15in}
\newcommand{\n}{\noindent}
\newcommand{\bdry}{\partial}
\newcommand{\R}{\mathbb{R}}
\newcommand{\C}{\mathbb{C}}
\newcommand{\Z}{\mathbb{Z}}
\newcommand{\be}{\begin{enumerate}}
\newcommand{\ee}{\end{enumerate}}
\title{Higher-dimensional Heegaard Floer homology and spectral networks}
\author{Ko Honda}
\address{University of California, Los Angeles, Los Angeles, CA 90095}
\email{honda@math.ucla.edu} \urladdr{http://www.math.ucla.edu/\char126 honda}
\author{Yin Tian}
\address{School of Mathematical Sciences, Beijing Normal University; 
Laboratory of Mathematics and Complex Systems, Ministry of Education, Beijing 100875, China}
\email{yintian@bnu.edu.cn} \urladdr{}
\author{Tianyu Yuan}
\address{School of Mathematical Sciences, Eastern Institute of Technology, Ningbo, Zhejiang, 315200, China}
\email{tyyuan@eitech.edu.cn} \urladdr{}
\date{\today}
\subjclass[2020]{Primary 53D40; Secondary 57K20.}
\begin{document}

\begin{abstract}
    Given a closed surface $C$ and a real exact Lagrangian $\Sigma \subset T^*C$ associated to a spectral curve, we construct a homomorphism $\op{BSk}_\kappa(C)\to\op{Mat}(N^{\kappa},\op{BSk}_\kappa(\Sigma))$ from the braid skein algebra of $C$ to the matrix-valued braid skein algebra of $\Sigma$ using Floer theory and in particular higher-dimensional Heegaard Floer homology (HDHF).  We sketch a proof that this map coincides with a hybrid Floer-Morse approach which counts HDHF-type holomorphic curves coupled with certain Morse gradient graphs --- called fold\-ed Morse trees --- using a variant of the adiabatic limit theorems of Fukaya-Oh and Ekholm, which compares holomorphic curves and Morse flow trees.
\end{abstract}

\maketitle

\tableofcontents

\section{Introduction}

Let $C$ be a closed Riemann surface and $\omega_C$ be the canonical line bundle of $C$. 
Fixing an integer $N \ge 1$, let $\phi_i$ be a meromorphic section of $\omega_C^{\otimes i}$ for $1 \le i \le N$. The data $\phi=(\phi_1,\dots, \phi_N)$ defines a {\em spectral curve} 
\begin{equation}
\label{eq-spectral curve}
    \Sigma_{\phi}=\left\{(z,p_z)~\Bigg|~ z\in C, p_z\in T^*_{\op{hol},z} C,\, P_\phi(z,p_z)\coloneqq p_z^N+\sum_{i=1}^{N}\phi_i(z) p_z^{N-i}=0\right \},
\end{equation}
where $T^*_{\op{hol}}C$ denotes the holomorphic cotangent bundle of $C$.

In the seminal paper \cite{gaiotto2013}, Gaiotto, Moore, and Neitzke introduced the notion of a {\em spectral network} in the study of four-dimensional $\mathcal{N}=2$ supersymmetric gauge theories coupled to surface defects. Roughly speaking, a spectral network is a network of trajectories on $C$ associated to the meromorphic differentials $\phi$ and which obey certain local rules.
A spectral network provides a way of lifting a path on $C$ to that on $\Sigma_\phi$, and hence induces a ``nonabelianization map'' from the moduli space of $\mathfrak{gl}(1)$-flat bundles on $\Sigma_\phi$ to that of $\mathfrak{gl}(N)$-flat bundles on $C$.
This nonabelianization map admits a quantization $\op{Sk}(C,\mathfrak{gl}(N)) \to \op{Sk}(\Sigma_\phi,\mathfrak{gl}(1))$ between the skein algebras. It is called {\em $q$-nonabelianization} in the work of Neitzke-Yan \cite{neitzke2020q}.  

\begin{remark} 
    In this paper we will not define what we mean by a ``spectral network'', referring the reader to \cite{Neitzke2021notes}.  Instead we define a certain class of graphs called {\em folded Morse flow trees} that are ``tangent to'' the spectral network associated to $\pi:\Sigma_\phi\to C$.
\end{remark}

This paper presents another approach to quantizing the nonabelianization map. Instead of skein algebras, we consider {\em braid skein algebras}. The {\em braid skein algebra} of a real surface, studied by Morton-Samuelson~\cite{morton2021dahas}, is generated by $\kappa$-strand braids, with $\kappa$ a positive integer, in a thickening of the surface modulo certain skein relations.  One of the aims of this paper is to use Floer theory to give an algebra map $\mathcal{F}:\op{BSk}_{\kappa}(C) \to \op{Mat}(N^{\kappa},\op{BSk}_\kappa(\Sigma_\phi))$ from the braid skein algebra of $C$ to the matrix-valued braid skein algebra on $\Sigma_\phi$; see Section~\ref{section: skein algebras} for definitions of $\op{BSk}_{\kappa}(C)$ and $\op{Mat}(N^{\kappa},\op{BSk}_\kappa(\Sigma_\phi))$.

The braid skein algebra reduces to the group ring of the fundamental group when $\kappa=1$ and hence our map $\mathcal{F}$ basically recovers the Gaiotto-Moore-Neitzke (GMN) map.  The relationship between the work of GMN and Floer theory was studied by Nho \cite{nho2023family}. A new ingredient of our map is to interpret the skein relation in terms of Floer theory and higher-dimensional Heegaard Floer homology (HDHF) in particular. The relationship between skein theory and holomorphic curves was first studied by Ekholm-Shende~\cite{ekholm2021skeins}, giving a curve counting interpretation of the HOMFLY-PT polynomial. 
Building on this work, we exhibited an isomorphism between the braid skein algebra of a surface and the wrapped HDHF ($A_\infty$-)algebra of cotangent fibers of the surface in \cite{honda2024jems}. We extend the isomorphism to incorporate spectral networks here. 
$$\begin{tikzcd}[row sep=large, column sep = large]
    \mathfrak{gl}(N)\text{-local systems on }C   & \mathfrak{gl}(1)\text{-local systems on }\Sigma_\phi\arrow{l}{\text{GMN/Nho}} \\
    \op{BSk}_\kappa(C) \arrow{u}{\kappa=1} \arrow{r}{\text{this paper}} & \op{Mat}(N^{\kappa},\op{BSk}_\kappa(\Sigma_\phi))\arrow{u}{\kappa=1}
\end{tikzcd}$$

We are interested in the connection between real symplectic geometry and spectral networks via Floer theory. 
A few works have appeared recently along the same lines. 
Casals and Nho study Betti Lagrangians with Legendrian asymptotics in spectral networks \cite{casals2025spectralnetworksbettilagrangians}. 
In recent work, Ekholm, Longhi, Park, and Shende \cite{ekholm2025skeintracescurvecounting} discuss the $q$-nonabeliani\-zation map via open Gromov-Witten theory.

From now on, we assume the following:
\begin{condition}\label{cond: nonsingularity}
    $P_\phi=0$, $\frac{\partial P_\phi}{\partial z}=0$, and $\frac{\partial P_\phi}{\partial p_z}=0$ do not hold simultaneously for $P_\phi$ in  \eqref{eq-spectral curve}.
\end{condition}

Then $\Sigma_\phi$ is smooth by the implicit function theorem. Since Condition~\ref{cond: nonsingularity} is a real codimension-$2$ condition on $\phi$, $\Sigma_\phi$ is smooth for a generic choice of $\phi$.

The projection $\pi:\Sigma_{\phi}\to C$ is an $N$-fold branched cover over $C$, where the ramification points are exactly $\Sigma_{\phi}\cap \{\frac{\partial P_\phi}{\partial p_z}=0\}$. 

\begin{remark}\label{remark: punctures}
    Since $P_\phi$ may admit poles, $\Sigma_\phi$ is actually a punctured Riemann surface, and ``$\pi: \Sigma_\phi\to C$ is a branched cover over $C$'' actually means that $\pi$ extends to a branched cover after putting back the set of punctures $\widetilde P$ to $\Sigma_\phi$.
\end{remark}

We further require:

\begin{condition}
    All the ramification points are simple.
\end{condition} 

At this point we transition from the holomorphic cotangent bundle $(T^*_{\op{hol}}C,\Omega=dz\wedge dp_z)$ to the real cotangent bundle $(T^*C, \omega=\op{Re}\Omega)$ of $C$; see Section~\ref{section: holomorphic symplectic to real symplectic} for more details. Under this transition, the spectral curve $\Sigma_{\phi}$ becomes a real Lagrangian submanifold in $T^*C$, denoted $\Sigma$ for simplicity.

We further require the following: 

\begin{condition} \label{condition: real exact}
    $\Sigma$ is a {\em real exact} Lagrangian in $T^*C$, i.e., $\Sigma$ is exact with respect to $\lambda=\op{Re}\Lambda$, where $\Lambda=-p_z dz$ is the holomorphic Liouville form such that $d\Lambda=\Omega=dz\wedge dp_z$. 
\end{condition} 

See \cite[Section 4.5.1]{nho2023family} for a criterion/discussion of real exactness.

Let $HW(\sqcup_{i=1}^\kappa T^*_{q_i}C)$ be the wrapped HDHF algebra of the disjoint union $\sqcup_{i=1}^\kappa T^*_{q_i}C$ of $\kappa$ cotangent fibers over ${\bf q}=\{q_1,\dots,q_\kappa\}$.
Our main result is the following:  

\begin{theorem} \label{thm: map of algebras holomorphic version}
    There is a map of algebras
    $$\mathcal{F}_{\op{hol}}\colon HW(\sqcup_{i=1}^\kappa T^*_{q_i}C)\to \op{Mat}(N^{\kappa},\op{BSk}_\kappa(\Sigma)),$$
    which is given by a holomorphic curve count.
\end{theorem}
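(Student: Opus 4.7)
The plan is to define $\mathcal{F}_{\op{hol}}$ by counting HDHF holomorphic curves in $T^*C$ whose boundary is split between the Lagrangians $\sqcup_{i=1}^{\kappa} T^*_{q_i}C$ and $\Sigma$. The matrix structure on the target is forced by the branched cover $\pi\colon \Sigma\to C$: provided $\mathbf{q}=\{q_1,\dots,q_\kappa\}$ is disjoint from the branch locus, each $q_i$ has exactly $N$ preimages $\{\tilde q_i^{\,j}\}_{j=1}^N\subset\Sigma$, so the pair of endpoints of a boundary arc on $\Sigma$ is recorded by functions $\mathbf{t},\mathbf{s}\colon\{1,\dots,\kappa\}\to\{1,\dots,N\}$, producing $N^\kappa\times N^\kappa$ matrix indices.

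Step 1 (moduli and matrix entries). Fix a wrapping Hamiltonian $H$ on $T^*C$ whose time-$1$ Hamiltonian chords on $\sqcup T^*_{q_i}C$ generate $HW(\sqcup T^*_{q_i}C)$. For a generator $\gamma$ (a $\kappa$-tuple of chords representing a braid in $C\times[0,1]$) and a pair $(\mathbf{t},\mathbf{s})$, consider the moduli space of HDHF maps $u\colon (F,\partial F)\to (T^*C,\Sigma)$ with $\kappa$ positive HDHF ends asymptotic to $\gamma$ on $\sqcup T^*_{q_i}C$, boundary mapped to $\Sigma$, and boundary-punctures labeled so that the arcs of $\partial F$ pair the points $\tilde q_i^{\,t_i}$ with the points $\tilde q_i^{\,s_i}$. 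The trace of $\partial u$ in $\Sigma$ is a $\kappa$-strand braid in $\Sigma\times[0,1]$, and we set $\mathcal{F}_{\op{hol}}(\gamma)_{\mathbf{t},\mathbf{s}}$ equal to the signed sum, over rigid components of this moduli space, of the isotopy class of that braid, viewed in $\op{BSk}_\kappa(\Sigma)$.

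Step 2 (well-definedness, chain map, algebra map). Regularity is obtained by a generic domain-dependent almost complex structure agreeing with $\pi$-compatible data outside a neighborhood of ramification. Real exactness of $\Sigma$ (Condition~\ref{condition: real exact}) combined with admissibility of $H$ yields the energy/action bounds needed for SFT-type compactness. Counting signed boundary points of one-dimensional moduli spaces gives the chain map identity, so $\mathcal{F}_{\op{hol}}$ descends to cohomology. Multiplicativity follows from a neck-stretching argument on the input ends: the resulting two-level broken configurations are in bijection with pairs $(u_1,u_2)$ whose source braids compose to the input product, and matching of intermediate endpoints $\tilde q_i^{\,r_i}$ realizes precisely the sum $\sum_{\mathbf r} (\cdot)_{\mathbf t,\mathbf r}\,(\cdot)_{\mathbf r,\mathbf s}$ defining matrix multiplication in $\op{Mat}(N^\kappa,\op{BSk}_\kappa(\Sigma))$. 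The braid skein relations on the target are imposed by the codimension-one disk bubbles on $\Sigma$ of the Ekholm--Shende type, which encode the HOMFLY-PT-like relations used to construct $\op{BSk}_\kappa$.

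The principal analytic obstacle is the behavior of the moduli spaces near the ramification points of $\pi$. At a simple branch point $r\in\Sigma$ the two sheets collide, so boundary arcs can degenerate onto the branch locus and disks can bubble in the local Lefschetz/branched-cover model; these degenerations are precisely where the spectral network trajectories emanating from $\pi(r)$ enter the story. Showing that such bubbles appear only in positive codimension for a generic almost complex structure, and hence do not spoil the rigid counts defining $\mathcal{F}_{\op{hol}}$, requires a careful local index and action computation in the branched cover chart, together with an adiabatic/neck-stretching comparison. This is the heaviest analytic input in the theorem, and it also sets up the hybrid Floer--Morse description announced in the abstract, in which the bubble contributions are reinterpreted through folded Morse trees tangent to the spectral network associated to $\pi$.
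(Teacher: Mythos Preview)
Your overall setup is broadly correct and aligned with the paper's: holomorphic curves with boundary partially on the cotangent fibers and partially on $\Sigma$, the matrix indices recording which lifts $\mathbf{p}_\alpha,\mathbf{p}_\beta$ of $\mathbf{q}$ are hit, and the evaluation map reading off the $\Sigma$-portion of the boundary as a braid. The multiplicativity argument via a one-dimensional moduli space whose two ends produce $\mathcal{F}_{\op{hol}}(\mu^2(\mathbf{y},\mathbf{y}'))$ and $\mathcal{F}_{\op{hol}}(\mathbf{y})\mathcal{F}_{\op{hol}}(\mathbf{y}')$, together with Ekholm--Shende nodal degenerations along $\Sigma$ accounting for the HOMFLY relation~\eqref{eq-skein'}, is essentially the paper's (T1)--(T3). (A minor point: since $CW(\sqcup_i T^*_{q_i}C)$ is supported in degree $0$, there is no chain-map identity to check.)

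The genuine gap is your treatment of the ramification points. You frame their role as an analytic obstacle --- bubbles in a local branched-cover model to be confined to positive codimension via index and action estimates --- and call this ``the heaviest analytic input.'' That is not what happens, and getting this wrong means you never see why the target algebra carries the extra relation~\eqref{eq-skein-branch}. In the paper the one-dimensional moduli space has a fourth boundary type (T4): the evaluation $\mathcal{E}(u)$ transversely crosses a ramification point in $\widetilde Z$. The orientation of the moduli space is defined using the pullback spin structure $\pi^*\mathfrak{s}$ on $\Sigma\setminus\widetilde Z$, which does \emph{not} extend across $\widetilde Z$; consequently the orientation flips as one crosses such a wall (Remark~\ref{remark: explanation of eq-skein-branch}). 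This sign flip is precisely relation~\eqref{eq-skein-branch}, and is what makes the count well-defined in $\op{Mat}(N^\kappa,\op{BSk}_\kappa(\Sigma))$. No bubble analysis near ramification is required, and no adiabatic comparison enters: the proof of Theorem~\ref{thm: map of algebras holomorphic version} is standard Floer theory once (T4) is identified. The adiabatic and spectral-network material you invoke belongs to the separate (and conjectural) comparison $\mathcal{F}_{\op{hol}}=\mathcal{F}_{\op{Mor}}$, not to this theorem.
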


We also discuss enhancements of $\mathcal{F}_{\op{hol}}$ by introducing some homological variables in Section~\ref{section: homological variables}, bringing us closer to the works \cite{gaiotto2013} and \cite{neitzke2020q}. 

The isomorphism $HW(\sqcup_{i=1}^\kappa T^*_{q_i}C)\cong \op{BSk}_{\kappa}(C)$ from \cite{honda2024jems} then implies the following:

\begin{corollary}
    There is a map of algebras
    \begin{equation*}
        \mathcal{F}\colon \op{BSk}_{\kappa}(C) \to \op{Mat}(N^{\kappa},\op{BSk}_\kappa(\Sigma)).
    \end{equation*}
\end{corollary}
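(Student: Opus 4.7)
The plan is to derive the corollary as an immediate consequence of Theorem~\ref{thm: map of algebras holomorphic version} together with the main result of \cite{honda2024jems}. Concretely, \cite{honda2024jems} establishes an algebra isomorphism
$$\Phi\colon \op{BSk}_{\kappa}(C) \xrightarrow{\ \sim\ } HW(\sqcup_{i=1}^\kappa T^*_{q_i}C),$$
which sends a $\kappa$-strand braid on $C$ to the class of the corresponding HDHF generator obtained by realizing the braid as a chord between a tuple of cotangent fibers over the base-points ${\bf q}=\{q_1,\dots,q_\kappa\}$. Choosing the same collection of base-points ${\bf q}$ in the construction of Theorem~\ref{thm: map of algebras holomorphic version}, the natural definition is
$$\mathcal{F} \coloneqq \mathcal{F}_{\op{hol}} \circ \Phi.$$
Since the composition of algebra homomorphisms is an algebra homomorphism, $\mathcal{F}$ is an algebra map from $\op{BSk}_{\kappa}(C)$ to $\op{Mat}(N^{\kappa},\op{BSk}_\kappa(\Sigma))$, as required.

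The substantive content is thus carried entirely by Theorem~\ref{thm: map of algebras holomorphic version} (the holomorphic curve count) and by the geometric identification in \cite{honda2024jems}; the corollary itself is a formal consequence. The only compatibility to note is that both constructions must be performed with the same tuple of cotangent fibers $T^*_{q_i}C$ and with consistent choices of auxiliary data (almost complex structure on $T^*C$, Hamiltonian perturbations defining the wrapping, and positioning of ${\bf q}$ away from the branch locus of $\pi\colon\Sigma\to C$). Under the conventions of \cite{honda2024jems}, this compatibility is automatic, so no further verification is required and the corollary follows.
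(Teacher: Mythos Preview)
Your proposal is correct and matches the paper's approach exactly: the paper simply states that the isomorphism $HW(\sqcup_{i=1}^\kappa T^*_{q_i}C)\cong \op{BSk}_{\kappa}(C)$ from \cite{honda2024jems} implies the corollary, and you have spelled out the composition $\mathcal{F}=\mathcal{F}_{\op{hol}}\circ\Phi$ explicitly.
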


The main advantage of the map $\mathcal{F}$ is that the HOMFLY skein relations are naturally preserved under $\mathcal{F}$. On the other hand, the connection between the curve counting and the spectral network associated to the spectral curve $\Sigma_{\phi}$ is not evident from the definition.

In order to remedy this, we present a hybrid Floer-Morse version of the above map:

\begin{proposition}
    There is a map 
    $$\mathcal{F}_{\op{Mor}}\colon HW(\sqcup_{i=1}^\kappa T^*_{q_i}C)\to \op{Mat}(N^{\kappa},\op{BSk}_\kappa(\Sigma))$$
    which is given by counting holomorphic curves coupled with folded Morse flow trees.
\end{proposition}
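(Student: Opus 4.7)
The plan is to construct $\mathcal{F}_{\op{Mor}}$ by counting broken configurations consisting of HDHF-type holomorphic curves in $T^*C$ with boundary on $\sqcup_{i=1}^{\kappa} T^*_{q_i}C$, coupled along certain boundary marked points to folded Morse flow trees on $C$. First I would fix the combinatorial datum of a folded Morse tree $T$ adapted to $\pi:\Sigma\to C$: a planar tree whose edges are oriented gradient trajectories of the real parts of the local sheet-difference primitives $\op{Re}\int (p_z^{(j)}-p_z^{(k)})\,dz$, whose internal vertices are either trivalent (composing sheet labels $(j,k),(k,l)\mapsto(j,l)$) or lie on the ramification locus of $\pi$ (\emph{folding} the two colliding sheets), and whose external vertices are attached to the Reeb chords of the cotangent fibers. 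These are the combinatorial shadows of the Gaiotto--Moore--Neitzke spectral network trajectories.

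Next I would set up the hybrid moduli space. Given Hamiltonian chord generators $\gamma_1,\dots,\gamma_d$ of $HW(\sqcup_i T^*_{q_i}C)$, consider tuples $(u,\{T_a\})$ where $u$ is a rigid HDHF-type holomorphic curve in $T^*C$ with boundary on the cotangent fibers and $\{T_a\}$ is a collection of folded Morse trees attached to $u$ along matching asymptotics at the cotangent fibers; each such configuration determines a lifted braid $\widetilde{u\cup T}\subset\Sigma$ together with incoming/outgoing sheet tuples $J,K\in\{1,\dots,N\}^{\kappa}$. The $(J,K)$-entry of $\mathcal{F}_{\op{Mor}}(\gamma_1\cdots\gamma_d)$ is then the signed count of such configurations, weighted by the braid class of $\widetilde{u\cup T}$ in $\op{BSk}_\kappa(\Sigma)$. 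Standard HDHF transversality together with Fukaya--Oh/Ekholm-type transversality for flow trees makes the count well-defined and invariant under auxiliary choices.

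The principal obstacle — and the real content of the proposition — is showing that this count agrees with $\mathcal{F}_{\op{hol}}$, from which the map properties follow. The scheme is an adiabatic degeneration in which we rescale the fiber coordinate $p_z\mapsto \varepsilon\,p_z$, so that $\Sigma$ shrinks into a tubular neighborhood of $\pi(\Sigma)\subset C$ as $\varepsilon\to 0$. Following Fukaya--Oh (for Lagrangian Floer theory in $T^*X$) and Ekholm (for Legendrian contact homology of multi-sheeted $1$-jet Lagrangians, the framework closest to ours), I would show that for $\varepsilon$ small every rigid holomorphic curve contributing to $\mathcal{F}_{\op{hol}}$ decomposes into \emph{thick} pieces — honest holomorphic disks near the cotangent fibers and near the ramification points of $\pi$ — joined by \emph{thin} pieces that concentrate along $\Sigma$ and limit to gradient flow segments of the sheet-difference functions; conversely, every coupled configuration of $\mathcal{F}_{\op{Mor}}$ arises in this way via a gluing theorem. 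The crucial technical ingredients are (i) a uniform $C^0$-estimate forcing the thin parts to lie in an $O(\sqrt{\varepsilon})$-neighborhood of $\Sigma$, (ii) an index computation matching the Fredholm dimensions of the holomorphic and hybrid moduli spaces, with each folding at a ramification vertex contributing the expected virtual dimension correction, and (iii) a gluing argument reversing the degeneration with the correct sign. Once the bijective identification of rigid configurations is in hand, the fact that $\mathcal{F}_{\op{hol}}$ is a map transfers directly to $\mathcal{F}_{\op{Mor}}$.
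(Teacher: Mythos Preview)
Your proposal conflates two distinct statements. In the paper the Proposition is purely a \emph{construction}: one defines the hybrid moduli spaces, proves they are transversely cut out and finite (Lemma~\ref{lemma: finite in number}), and then writes down $\mathcal{F}_{\op{Mor}}$ by the formula~\eqref{eq-f-morse}. The equality $\mathcal{F}_{\op{Mor}}=\mathcal{F}_{\op{hol}}$ is a separate statement (Conjecture~\ref{conj-morse}), whose adiabatic-limit proof is only sketched in Section~\ref{section: equivalence of two approaches}. So your third paragraph, which you call ``the real content of the proposition,'' is in fact the content of the Conjecture, not of the Proposition; and the paper explicitly says that it does \emph{not} verify the algebra-map property of $\mathcal{F}_{\op{Mor}}$ directly but only as a byproduct of the (conjectural) equality with $\mathcal{F}_{\op{hol}}$.

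More substantively, the hybrid configurations you describe are not the ones the paper counts. In the paper the ``squid body'' is a curve $u\in\mathcal{T}^k_0(\mathbf{q},\mathbf{y},\mathbf{q})$ whose distinguished boundary arc over $\partial_2 Y_1$ lands on the \emph{zero section} $C$, not on $\Sigma$; the folded Morse trees (the ``squid legs'') are attached at interior points $t_i$ of that zero-section boundary arc via root vertices, not ``along matching asymptotics at the cotangent fibers.'' The non-root exterior vertices of the trees are not Reeb chords either: they are \emph{diagonal} vertices (where two of the $\kappa$ lifted strands merge) or \emph{cusp} vertices (where a gradient edge runs into a branch point of $\pi$); see Definitions~\ref{def-folded Morse tree} and~\ref{defn: folded squid maps}. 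The output braid in $\Sigma$ is then obtained by lifting $\mathcal{E}(u)$ sheet-by-sheet using the $i_T/i_B$ labels along the tree edges and thickening via $G^\sharp\to\Sigma$. Your picture --- trees joining holomorphic disks at Reeb chords with folding at ramification vertices --- is closer in spirit to a pearly/cluster model and does not match the paper's squid construction; in particular it is unclear in your setup what replaces the zero-section boundary that makes the hybrid count an honest index-$0$ problem before any $\epsilon\to0$ limit is taken.
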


\begin{conjecture}
\label{conj-morse}
    $\mathcal{F}_{\op{Mor}}=\mathcal{F}_{\op{hol}}$.
\end{conjecture}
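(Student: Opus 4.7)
The plan is to prove Conjecture \ref{conj-morse} via an adiabatic limit argument, in the spirit of Fukaya-Oh \cite{} and Ekholm \cite{}, adapted to the branched multi-section setting. Concretely, I would introduce a one-parameter family of real Lagrangians $\Sigma_\epsilon\subset T^*C$ obtained by scaling the cotangent fibers by $\epsilon>0$, so that $\Sigma_\epsilon$ collapses onto the zero section (with multiplicity $N$) as $\epsilon\to 0$. Correspondingly, one rescales the almost complex structure on $T^*C$ so that the projection $\pi\colon T^*C\to C$ becomes $J$-holomorphic in the limit. Both maps $\mathcal{F}_{\op{hol}}$ and $\mathcal{F}_{\op{Mor}}$ are defined by counts of moduli spaces $\mathcal{M}_\epsilon$ of HDHF-type holomorphic curves with boundary conditions involving $\Sigma_\epsilon$ and cotangent fibers $T^*_{q_i}C$; invariance under $\epsilon$ of the resulting count, together with identification of the two endpoints of the family, yields the equality.

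The key steps, in order, are: (i) set up a compactified moduli space $\overline{\mathcal{M}}_\epsilon$ of HDHF curves for each $\epsilon\in(0,1]$, with input cotangent fibers and boundary on $\Sigma_\epsilon$, working in a suitable stable Hamiltonian/SFT-type framework so that neck-stretching is well-behaved; (ii) show that for $\epsilon$ bounded away from $0$ one obtains $\mathcal{F}_{\op{hol}}$, by standard invariance of HDHF counts under deformations of $\Sigma$ through real exact Lagrangians (using Condition \ref{condition: real exact}); (iii) in the $\epsilon\to 0$ limit, establish a compactness theorem saying that sequences $u_\epsilon\in\mathcal{M}_\epsilon$ converge to pairs $(v,T)$ where $v$ is a holomorphic curve in $C$ (projected by $\pi$) with appropriate marked points and $T$ is a folded Morse flow tree in $\Sigma$, with edges mapping under $\pi$ onto gradient segments in $C$ of differences of local sheets of $\Sigma$, and folds/switches occurring precisely at projections of ramification points; (iv) prove the converse gluing result, producing a unique (modulo regularity) $u_\epsilon$ for each transversely cut out configuration $(v,T)$ and $\epsilon$ sufficiently small; (v) match signs/weights so that the signed count defines precisely $\mathcal{F}_{\op{Mor}}$.

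The main obstacle, and the reason this is stated as a conjecture rather than a theorem, is the analysis near the ramification locus. Fukaya-Oh's original theorem treats the adiabatic limit for Lagrangian sections, and Ekholm's extension handles the Legendrian contact homology setting with ends, but in both cases the Lagrangian is modeled on the graph of a one-form. Here $\Sigma$ is an $N$-to-$1$ branched cover of $C$, so at ramification points two sheets of $\Sigma$ meet tangentially and the local difference $p_z^{(i)}-p_z^{(j)}$ of sheets vanishes to order $1/2$ rather than being a smooth Morse function. Consequently the gradient equation defining the Morse tree edges degenerates, and the geometry of folded Morse trees is engineered precisely so that the resulting flows extend smoothly across folds. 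The analytical hard work is therefore: (a) a local model for HDHF curves in a neighborhood of a ramification point, replacing the usual strip-like ends of Fukaya-Oh with a square-root branch model; (b) Fredholm theory with appropriate weighted Sobolev spaces adapted to this branching; and (c) a corresponding gluing theorem across folds.

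A secondary obstacle is bubbling: holomorphic disks with boundary on $\Sigma_\epsilon$ could a priori collapse to constant points on the zero section together with nontrivial sphere or disk bubbles in the fiber. Real exactness of $\Sigma$ rules out disk bubbles on $\Sigma$ itself, and since $T^*C$ is exact there are no sphere bubbles, so one expects this to be controllable; but the bookkeeping of breaking at Reeb chords (between cotangent fibers and $\Sigma_\epsilon$ ends) near the braid skein generators must be done carefully so that the count in $\op{Mat}(N^{\kappa},\op{BSk}_\kappa(\Sigma))$ is produced with the correct matrix entries corresponding to the sheets of $\pi$ labeling endpoints of the folded trees.
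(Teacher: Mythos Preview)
Your overall architecture---scale $\Sigma$ by $\epsilon$, prove compactness as $\epsilon\to 0$ to folded Morse trees, then glue back---matches the paper's strategy closely, and you correctly identify the ramification locus as the central analytical difficulty. Where you diverge is in how you propose to handle that difficulty, and this is worth flagging.

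You propose to confront the branch points head-on: build a local square-root model for HDHF curves near a ramification point, set up weighted Sobolev spaces adapted to the half-integer vanishing of the sheet differences, and prove a bespoke gluing theorem across folds. The paper does \emph{not} do this. Instead it introduces an auxiliary Hamiltonian perturbation $\zeta_\nu$ of $\Sigma$ supported near the ramification points, which replaces each simple branch point by a triple of cusp folds meeting at three swallowtails (the generic front singularities). For the perturbed Lagrangian $\Sigma_\nu^\epsilon$, Ekholm's existing flow-tree/holomorphic-curve correspondence applies with only mild modification, and the adiabatic limit $\epsilon\to 0$ is run entirely in that setting (Steps~1--4 of Section~\ref{section: equivalence of two approaches}). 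The comparison back to the unperturbed $\Sigma$ is then handled by two separate, relatively soft lemmas: one (Lemma~\ref{lemma-limit-F}) says $\mathcal{F}_{\op{hol}}$ is invariant under $\zeta_\nu$ and scaling, and the other (Lemma~\ref{lemma-morse-compare}) says the folded-tree count for $\Sigma_\nu$ agrees with that for $\Sigma$ modulo $\hbar^{l+1}$ once $\nu$ is large, invoking a result of Casals--Nho. Taking $l\to\infty$ at the end finishes.

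What this buys the paper is that no genuinely new local analysis at branch points is needed: everything reduces to Ekholm's cusp-edge theory plus standard Floer-theoretic invariance. What your route would buy, if carried out, is a more direct and conceptually cleaner statement---no auxiliary perturbation, no double limit in $(\nu,\epsilon)$, no $\hbar$-adic bookkeeping. But your items (a)--(c) constitute a substantial new analytic package that, as far as I know, does not yet exist in the literature; the paper's detour exists precisely to avoid having to build it.
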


In this paper we sketch a proof of Conjecture~\ref{conj-morse}, using a variant of the adiabatic limit theorems of  Fukaya-Oh \cite{fukaya1997zero} and Ekholm  \cite{ekholm2007morse}, which relate holomorphic curves and Morse flow trees. The details are left to future work, but {\em all the steps of the proof have similar/analogous results that have already appeared in the literature.}

The folded Morse flow trees considered in the definition of $\mathcal{F}_{\op{Mor}}$ are very similar to the M2-branes interpretation in the $q$-nonabelianization \cite{neitzke2020q} for $N=2$. The main difference is that we consider braids here, whereas Neitzke and Yan consider arcs in general. For $N=2$ our folded Morse tree count gives what \cite{neitzke2020q} calls {\em detour moves} and {\em exchange moves}.
The curve counts should be different for $N=3$ (in general we count fewer curves) due to the presence of the cylindrical direction. It would be interesting to explicitly compare this difference. 

\begin{remark}
    Although both the skein algebra and the braid skein algebra concern curves in a thickened surface, a direct connection between them is not evident to the authors. One tricky aspect concerns the framing. There is no canonical framing for elements in the braid skein algebra; see \cite[Section 1.3 and the proof of Theorem 4.1]{morton2021dahas} for a more detailed discussion. 
\end{remark}

The paper is organized as follows: 
Sections~\ref{section: skein algebras} and \ref{section: holomorphic symplectic to real symplectic} are preparatory.
In Section~\ref{section: the Floer-theoretic approach}, we define the Floer-theoretic map $\mathcal{F}_{\op{hol}}$ from HDHF of cotangent fibers of $C$ to the matrix-valued braid skein algebra and prove Theorem~\ref{thm: map of algebras holomorphic version}.  We discuss enhancements of $\mathcal{F}_{\op{hol}}$ by introducing some homological variables in Section~\ref{section: homological variables}.
In Section~\ref{section: hybrid Floer-Morse}, we present the hybrid Floer-Morse approach to $\mathcal{F}_{\op{hol}}$ and, in Section~\ref{section: equivalence of two approaches}, we sketch the proof that the two approaches coincide.

\vskip0.1in
\noindent \textit{Acknowledgements}. 
We thank Vivek Shende for suggesting this project, and Tobias Ekholm, Yoon Jae Nho, Francis Bonahon, Helen Wong, Vijay Higgins, and Octav Cornea for helpful conversations.  We also thank Erkao Bao for extensive conversations and help on gradient graph gluing.

\section{The braid skein algebra and the matrix-valued braid skein algebra} \label{section: skein algebras}

In this section we briefly review the braid skein algebra and introduce the matrix-valued braid skein algebra.

Let $\mathbf{q}=\{q_1,\dots,q_{\kappa}\}\subset C$ be a $\kappa$-tuple of basepoints. Let $\mathrm{Br}_{\kappa}(C,\mathbf{q})$ be the braid group on $C\times[0,1]$ consisting of $\kappa$ strands from ${\bf q}\times\{0\}$ to ${\bf q}\times\{1\}$ that are transverse to the slices $C\times\{t\}$, $t\in[0,1]$; the product is given by the concatenation of braids.

\begin{definition}[Morton-Samuelson]
    \label{def-skein}
    The braid skein algebra $\mathrm{BSk}_\kappa(C,\mathbf{q})$ is the quotient of the group algebra $\mathbb{Z}[\hbar][\mathrm{Br}_{\kappa}(C,\mathbf{q})]$ by the braid isotopy relation and the local HOMFLY skein relation:
    \begin{equation}
        \label{eq-skein'}
        \includegraphics[width=1cm,valign=c]{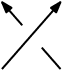}-\includegraphics[width=1cm,valign=c]{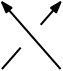}=\hbar \includegraphics[width=1cm,valign=c]{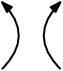}.
    \end{equation}
\end{definition}

See \cite{honda2024jems} for more details. 

Let $\pi: \Sigma \to C$ be an $N$-fold branched cover with simple ramification points, $Z\subset C$ be the set of branch points, $\widetilde Z\subset \Sigma$ be the set of ramification points sitting over $Z$, $P\subset C$ be the set of poles of $P_\phi$, and $\widetilde P$ be the set of punctures that complete $\Sigma$ to a closed Riemann surface (see Remark~\ref{remark: punctures}). Assume that ${\bf q}\cap (Z\cup P)=\varnothing$ and there is a single ramification point over each branch point.
Let $\pi^{-1}(q_i)=\{{p_{i1},\dots,p_{iN}}\}$ for $i=1,\dots,\kappa$. 
Consider an indexing set of maps
\begin{equation}\label{eqn: I}
    \mathfrak{I}=\{\alpha:\{1,\dots,\kappa\} \to \{1,\dots,N\}\}.
\end{equation}
We have $|\mathfrak{I}|=N^{\kappa}$. 
Given $\alpha \in \mathfrak{I}$, let $\mathbf{p_{\alpha}}=\{p_{1\alpha(1)},\dots,p_{\kappa\alpha(\kappa)}\}$ be a collection of $\kappa$ disjoint points on $\Sigma$ such that $\pi:\mathbf{p_{\alpha}}\to\mathbf{q}$ is a bijection. 

Define the braid skein algebra $\mathrm{BSk}_\kappa(\Sigma,\mathbf{p_{\alpha}},\mathbf{p_{\beta}})$ to be the quotient of the free $\mathbb{Z}[\hbar]$-module generated by braids in $\Sigma \times [0,1]$ from $\mathbf{p_{\alpha}}$ to $\mathbf{p_{\beta}}$, modulo the local HOMFLY skein relation (\ref{eq-skein'}) and an additional relation:
\begin{equation}
    \label{eq-skein-branch}
    \includegraphics[width=5cm,valign=c]{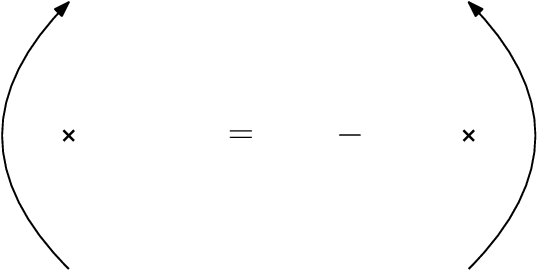}
\end{equation}
where the crossing denotes a simple ramification point on $\Sigma$. 

\begin{definition} \label{defn: matrix-valued braid skein algebra}
The {\em matrix-valued braid skein algebra on $\Sigma$ with respect to $\pi: \Sigma \to C$} is:
$$\op{Mat}(N^{\kappa},\op{BSk}_\kappa(\Sigma))=\{M=[M_{\alpha \beta}] ~|~ M_{\alpha \beta} \in \mathrm{BSk}_\kappa(\Sigma,\mathbf{p_{\alpha}},\mathbf{p_{\beta}}), \alpha,\beta \in \mathfrak{I}\},$$
where the product is given by $N^\kappa\times N^\kappa$-matrix multiplication and concatenation of braids.      
\end{definition}

\begin{remark} \label{remark: explanation of eq-skein-branch}
Relation \eqref{eq-skein-branch} arises as follows: Let $\mathfrak{s}$ be a spin structure on $C$ and let $\pi^*\mathfrak{s}$ be the pullback spin structure to $\Sigma\setminus \widetilde Z$. Note that $\pi^*\mathfrak{s}$ does not extend over the ramification points; this is evident from viewing a spin structure as a stable trivialization of $T\Sigma$ over the $1$-skeleton that extends over the $2$-skeleton. Suppose we orient the relevant moduli space of holomorphic curves $\mathcal{T}:={\mathcal{T}}^{\operatorname{ind}=1}(\mathbf{p}_\alpha,\mathbf{y}',\mathbf{y},\mathbf{p}_\beta)$ (see Section~\ref{subsection: proof of thm map of algebras holomorphic version}) using $\pi^*\mathfrak{s}$.  Consider a $1$-parameter family $u_\tau\in \mathcal{T}$, $\tau\in[-\epsilon,\epsilon]$, for $\epsilon>0$ small. There is an evaluation map $\mathcal{E}$ such that $\mathcal{E}(u_\tau)\in\op{Mat}(N^{\kappa},\op{BSk}_\kappa(\Sigma))$ (see Section~\ref{subsection: defn of F hol}).  If $\mathcal{E}(u_{-\epsilon})$ is locally given by the left-hand side of \eqref{eq-skein-branch}, $\mathcal{E}(u_{\epsilon})$ is locally given by the right-hand side, and the family $\mathcal{E}(u_\tau)$ crosses the ramification point transversely at $\tau=0$, then $u_{0}$ is not oriented and the orientation reverses as we cross $u_{0}$.  This gives \eqref{eq-skein-branch}.
\end{remark}

\begin{remark}
    One can also equally well choose a spin structure that is defined over all of $\Sigma$, in which case we switch $-$ to $+$ in \eqref{eq-skein-branch}.
\end{remark}

\section{From \texorpdfstring{$T^*_{\op{hol}}C$}{T*holC} to \texorpdfstring{$T^* C$}{T*C}} \label{section: holomorphic symplectic to real symplectic}

In this section we summarize how to pass from the holomorphic symplectic manifold $T^*_{\op{hol}}C$ to the real cotangent bundle $T^* C$.  The identification is locally given by $(z,p_z)\mapsto (x,y, p_x,-p_y)$.  
  
The canonical holomorphic symplectic form on $T^*_{\op{hol}}C$ is $\Omega=dz\wedge dp_z$ and the canonical symplectic form on $T^*C$ is 
\begin{equation*}
    \omega=\op{Re}\Omega=  \op{Re} dz\wedge dp_z=\op{Re} (dx+idy) (dp_x-i dp_y)= dx dp_x + dy dp_y.
\end{equation*}
We also have the holomorphic Liouville form $\Lambda=-p_z dz$ such that $d\Lambda=\Omega$ and the real Liouville form $\lambda =\op{Re}\Lambda=-(p_xdx+p_ydy)$.

Since $\Sigma:=\Sigma_\phi$ is a smooth algebraic curve in $T^*_{\op{hol}}C$, the holomorphic symplectic form $\Omega$ vanishes on $\Sigma$, that is, $\Sigma$ is a holomorphic Lagrangian submanifold of $T^*_{\op{hol}}C$.   Viewed as a surface in $T^*C$, the spectral curve $\Sigma$ becomes a real Lagrangian submanifold in $T^*C$ since $\omega=\op{Re}\Omega$ vanishes. [Verification: Letting $\{v, Jv\}$ be a basis for the tangent space $T\Sigma$ at a point, where $J$ is the complex structure for $T^*_{\op{hol}}C$, 
\begin{align*}
    dz dp_z(v,Jv)=dz(v) dp_z(Jv) - dz(Jv) dp_z(v)= i dz(v) dp_z(v) - i dz(v) dp_z(v)=0,
\end{align*}
since $dz$ and $dp_z$ are $(J,i)$-antilinear.]

Let $U$ be a contractible open subset of $C\setminus Z$, where $Z$ is the branch locus of $C$. Then $\pi^{-1}(U)$ can be written locally as the union of graphs $df_1,\dots, df_N$ of functions $U\to \R$. We explain the relationship between the locally defined functions $f_j$ and the defining polynomial $P_\phi(z,p_z)$ for $\Sigma$ from the introduction: On $\pi^{-1}(U)$, we can factor $P_\phi(z,p_z)= \prod_{j=1}^N(p_z-\psi_j(z))$.  Then $\Sigma$ is locally given by $p_z=\psi_j(z)=a_j(z) +i b_j(z)$. Writing $z=x+iy$ and $p_z=p_x-ip_y$, in real terms $\Sigma$ is the graph of $\alpha_j:=a_j dx-b_jdy=\op{Re}\psi_j(z)dz$.  Then, by the Cauchy-Riemann equations, $d\alpha_j=0$ and we can write $\alpha_j= df_j$. The foliation given by $\op{Re} \psi_j(z)$ is tangent to the level sets of $f_j$ and the foliation given by $\op{Im} \psi_j(z)dz$ is tangent to $\nabla f_j$.  The same holds for differences $\psi_i-\psi_j$ and $f_i-f_j$.

\section{The Floer-theoretic approach} \label{section: the Floer-theoretic approach}

Let $C$ be a closed surface. Let $CW(\sqcup_{i=1}^\kappa T^*_{q_i}C)$ be the wrapped HDHF $A_\infty$-algebra of $\kappa$ disjoint cotangent fibers of $T^*C$ and let $HW(\sqcup_{i=1}^\kappa T^*_{q_i}C)$ be its (co-)homology.   We refer to \cite{honda2024jems} and \cite{colin2020applications} for the definition of $CW(\sqcup_{i=1}^\kappa T^*_{q_i}C)$, which in our case is an ordinary algebra supported in degree $0$.

The goal of this section is to prove Theorem~\ref{thm: map of algebras holomorphic version}, i.e., define a Floer-theoretic evaluation map $\mathcal{F}_{\op{hol}}$ from $CW(\sqcup_{i=1}^\kappa T^*_{q_i}C)$ to the matrix-valued braid skein algebra $\op{Mat}(N^{\kappa},\op{BSk}_\kappa(\Sigma))$ on $\Sigma$, where $\pi:\Sigma\to C$ is an $N$-fold branched cover. Recalling the isomorphism 
\begin{equation} \label{eqn: isom with bsk}
    HW(\sqcup_{i=1}^\kappa T^*_{q_i}C)\cong \op{BSk}_{\kappa}(C,{\bf q})
\end{equation} 
from \cite{honda2024jems}, we then obtain a map $\mathcal{F}: \op{BSk}_{\kappa}(C,{\bf q})\to \op{Mat}(N^{\kappa},\op{BSk}_\kappa(\Sigma))$.  We also observe that the proof of Theorem~\ref{thm: map of algebras holomorphic version} follows the same outline as that of \eqref{eqn: isom with bsk}.

\subsection{The base direction} \label{subsection: base direction}

The key idea to defining HDHF is to count holomorphic maps of arbitrary genus to a target symplectic manifold that also branch cover over a base (i.e., the domain of our map is viewed as an element of a Hurwitz space). 

Let $D$ be the unit disk in $\mathbb{C}$ and $D_m=D\setminus\{p_0,\dots,p_m\}$, where $p_i\in\partial D$ are boundary marked points arranged in counterclockwise order. Let $\partial_i D_m$ be the boundary arc from $p_i$ to $p_{i+1}$ for $i\in\mathbb{Z}/(m+1)\mathbb{Z}$. 
Let $\mathcal{A}_m$ be the moduli space of $D_m$ modulo automorphisms; we choose representatives $D_m$ of equivalence classes of $\mathcal{A}_m$ in a smooth manner and abuse notation by writing $D_m\in \mathcal{A}_m$.

As usual, there is a smooth choice of strip-like ends $e_i$, $i=0,\dots, m$, for each $D_m$.
We view the strip-like ends $e_i$, $i=1,\dots, m$, of $D_m$ corresponding to $p_i$ as positive ends $[0,\infty)_{r_i}\times[0,1]_{t_i}$ and view the strip-like end $e_0$ corresponding to $p_0$ as the negative end $(-\infty,0]_{r_0}\times[0,1]_{t_0}$. 

Next, let $D_{m,l}$ be $D_m$ together with a finite (unordered) set $\{h_1,\dots,h_l\}$ of interior marked points. We denote the set of $D_{m,l}$ modulo automorphisms by $\mathcal{A}_{m,l}$.

\begin{convention}\label{conv: omitting subscripts}
    Sometimes we omit the subscript $l$ of $D_{m,l}$ and write $D_m$ to indicate the result of applying a forgetful functor.
\end{convention}

\begin{lemma}[Smooth choice of strip-like ends and necks]\label{lemma: strip-like ends and necks} $\mbox{}$ 
    \be
    \item There is a smooth choice of strip-like ends $e_i$, $i=0,\dots, m$, for each $D_{m,l}$, on which there are no marked points.
    \item For $D_{m,l}\in \mathcal{D}_{m,l}$ such that:
    \be
    \item $D_m$ is sufficiently close to the boundary $\mathcal{D}_{m}$ or 
    \item $D_{m,l}$ has marked points sufficiently far into some $e_i$,
    \ee
    there is a collection of smoothly chosen strip-like necks $n_i$ of type $[-L,L]_r\times[0,1]_t\subset D_{m,l}$ (here $L$ depends on $D_{m,l}$ and $n_i$ and $([-L,L]\times[0,1])\cap \bdry D_{m,l}=[-L,L]\times\{0,1\}$), on which there are no marked points.
    \item Each component of $D_{m,l}\setminus ((\cup_i n_i)\cup (\cup_j e_j))$ agrees with some $D_{m',l'}\setminus (\cup_j e_j)$, where $m+l> m'+l'$. 
    \ee
\end{lemma}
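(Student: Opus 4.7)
The plan is to build the strip-like ends and necks by induction on the complexity $m+l$, mirroring the classical construction of Seidel for $\mathcal{A}_m$ and upgrading it to accommodate interior marked points via the forgetful map $\mathcal{A}_{m,l}\to\mathcal{A}_m$. For the base case, where $\mathcal{A}_{m,l}$ has no boundary strata in play, I would fix a smooth section of the universal curve and choose holomorphic strip-like ends $e_i\colon [0,\infty)_{r_i}\times[0,1]_{t_i}\hookrightarrow D_{m,l}$ at each boundary puncture $p_i$ whose images avoid all interior marked points; smoothness in the moduli parameter is automatic from the smoothness of the section, giving (1) away from the neck regions.

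For the inductive step and part (2), I would treat the two sources of necks separately. In case (2)(i), as $D_m$ approaches a boundary stratum of $\mathcal{A}_m$, it degenerates into a nodal configuration, and the standard plumbing coordinate with gluing parameter $1/L$ produces a smoothly varying region biholomorphic to $[-L,L]_r\times[0,1]_t$ with $L\to\infty$ at the boundary; this region is declared to be the neck $n_i$. In case (2)(ii), if interior marked points sit at $r$-coordinate $r_j$ on the strip-like end $e_i$ with $r_j$ exceeding a threshold $R_0$, I would cut $e_i$ at some intermediate value $r=L(D_{m,l})$ chosen smoothly so that all sufficiently deep marked points lie on one side, and take the region $[-L,L]\times[0,1]$ straddling this cut as $n_i$. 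In both cases, the ends from (1) are simultaneously retracted (smoothly in moduli) to lie outside the newly created necks, preserving the no-marked-point condition on the ends.

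For part (3), after removing $\cup_i n_i$ and $\cup_j e_j$, each remaining connected component is biholomorphic to the complement of the strip-like ends in some $D_{m',l'}$, with the new boundary punctures along the cuts carrying the standard strip-like ends of the excised necks. The strict inequality $m+l>m'+l'$ holds because each neck corresponds to a nontrivial partition of $D_{m,l}$: a boundary-stratum node in case (2)(i) splits the disk into two pieces each of strictly smaller complexity, while a deep marked point in case (2)(ii) separates that marked point (together with any others in the same end) from the rest. The inductive hypothesis then supplies ends and necks on each component, and a standard gluing/neck-stretching argument shows that these lower-stratum choices extend smoothly into the interior of $\mathcal{A}_{m,l}$.

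The main obstacle will be ensuring \emph{simultaneous} compatibility of (1), (2), and (3) in regions of $\mathcal{A}_{m,l}$ where several necks of either type form at once, for instance when $D_m$ approaches a codimension-$k$ boundary stratum of $\overline{\mathcal{A}}_m$ while simultaneously acquiring a marked point deep in some $e_i$. This is handled by organizing the construction around the combinatorial dual tree encoding how $D_{m,l}$ decomposes along its necks and ends and inducting on the number of vertices; near a stratum of $\overline{\mathcal{A}}_{m,l}$ corresponding to a tree $T$, the neck/end data are forced to agree with the product of the data already chosen at each vertex. This is precisely the mechanism used by Seidel for $\mathcal{A}_m$ (see \emph{Fukaya Categories and Picard--Lefschetz Theory}) and in the HDHF setup of \cite{honda2024jems, colin2020applications}, so the present lemma is obtained by adapting that argument across the forgetful map $\mathcal{A}_{m,l}\to\mathcal{A}_m$.
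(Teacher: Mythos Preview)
The paper does not supply a proof of this lemma; it is stated and then the text moves on directly to the notation $Y_{m-1}:=D_m$. Your proposal is a correct and standard sketch of the construction, inducting on $m+l$ and adapting Seidel's consistent choices of strip-like ends to the setting with interior marked points via the forgetful map, which is almost certainly what the authors intend the reader to supply.
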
   

For convenience, in this paper we also write $Y_{m-1}:=D_m$, where $\partial_i Y_{m-1} = \partial_i D_m$ for $i=0,\dots, m$. 
Let $\mathcal{Y}_{m-1}$ be the moduli space of $Y_{m-1}$ modulo automorphisms; again we may choose representatives $Y_{m-1}$ of equivalence classes of $\mathcal{Y}_{m-1}$ in a smooth manner. We similarly define $Y_{m-1,l}$ and $\mathcal{Y}_{m-1,l}$. In this paper, we will mainly use $\mathcal{Y}_1$ and $\mathcal{Y}_2$.

\begin{figure}[ht]
    \centering
    \includegraphics[width=14cm]{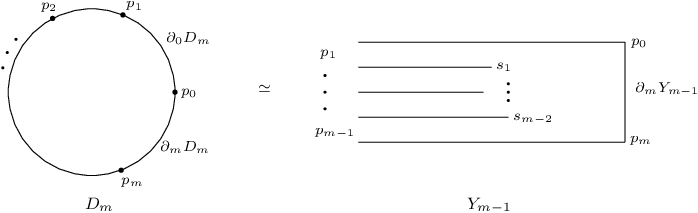}
    \caption{}   
    \label{fig: base}
\end{figure}

Consider the following representative of $Y_{m-1}$ for $m\geq 2$: 
\begin{equation} \label{eqn: model for Ym-1}
    (-\infty,0]_s\times[0,1]_\tau\subset\mathbb{C}
\end{equation}
with $m-2$ slits along $\tau=(m-2)/(m-1),\dots,\tau=1/(m-1)$; here $s=\op{Re}z$ and $\tau=\op{Im}z$.  See the right-hand side of Figure \ref{fig: base}. Since $Y_{m-1}$ admits no nontrivial automorphism group, we call this presentation {\em the standard coordinate} of $Y_{m-1}$. The coordinates of $\mathcal{Y}_{m-1}$ are given by the length of the slits.

\subsection{Consistent collection of perturbation data} \label{subsection: consistent collection 1}

In order to achieve the transversality of the moduli spaces of holomorphic maps used in the definition $CW(\sqcup_{i=1}^\kappa T^*_{q_i}C)$, it suffices to consider domain-dependent almost complex structures that are parametrized by $Y_1$ and $Y_2$.  However, in order to compare with the hybrid Floer-Morse approach in Section~\ref{section: equivalence of two approaches}, we will require a finer perturbation of almost complex structures which depends on $Y_{1,l}$ and $Y_{2,l}$, $l\geq0$, which we describe in this subsection.

Fix a generic Riemannian metric $g_0$ on the closed surface $C$ and let $|\cdot|$ be the induced norm on $T^*C$. 
Let $\mathcal{H}(T^*C)\subset C^\infty(T^*C,\mathbb{R})$ be the set of smooth functions $H$ such that 
\begin{equation*}
    H(q,p)=\frac{1}{2}|p|^2
\end{equation*}
outside some compact subset of $T^*C$. Fix $\epsilon>0$ and let $\mathcal{H}^\epsilon(T^*C)\subset\mathcal{H}(T^*C)$ be the subset such that $H(q,p)=0$ for $|p|<\epsilon$.
Also fix $H_0\in\mathcal{H}(T^*C)$ such that all time-$1$ Hamiltonian orbits between the cotangent fibers considered in this paper are nondegenerate.

Consider a smooth family of Hamiltonian functions parametrized by $Y_1$:
\begin{equation}
    H_{Y_1}\colon Y_1\to \mathcal{H}(T^*C),
\end{equation}
which takes values in $\mathcal{H}^\epsilon(T^*C)$ near $\partial_2 Y_1$. We require that $H_{Y_1}=H_0$ on the strip-like end $e_1$.
Write $X_{H_{Y_1}}$ for the Hamiltonian vector field of $H_{Y_1}$, i.e., $\omega(X_{H_{Y_1}},\cdot)=dH_{Y_1}$.

Denote by $\mathcal{J}(T^*C,\omega)$ the set of almost complex structures on $T^*C$ which are compatible with the canonical symplectic form $\omega$. 
Fix a smooth family $J_t\in\mathcal{J}(T^*C,\omega)$ for $t\in[0,1]$.
Let
\begin{equation}
   \mathcal{J}_{1,l}:=\{ J_{Y_{1,l}}\colon Y_{1,l}\to\mathcal{J}(T^*C,\omega)~|~ Y_{1,l}\in \mathcal{Y}_{1,l}\}
\end{equation}
be a family of almost complex structures that depends smoothly on $Y_{1,l}$ and such that $J_{Y_{1,l}}(r_1,t_1)=J_{t_1}$ on the strip-like end $e_1$. 

\begin{definition}\label{defn: perturbation data for Y1l}
    A pair $(H_{Y_1},\{J_{Y_{1,l}}\})$ satisfying the above is called a {\em perturbation data for $\mathcal{Y}_{1,l}$.}
\end{definition}

If $\mathcal{J}_{1,l}$ is consistent with the boundary strata of $\mathcal{Y}_{1,l}$ (by this we mean each $J_{Y_{1,l}}$ is $r$-invariant on the strip-like ends and necks given by Lemma~\ref{lemma: strip-like ends and necks} and respects the identifications given by Lemma~\ref{lemma: strip-like ends and necks}(3)), then $\{\mathcal{J}_{1,l}~|~l\geq0\}$ is called a {\em consistent collection of almost complex structures}.

\begin{lemma} \label{lemma: existence of consistent collection for J}
There exists a consistent collection $\{\mathcal{J}_{1,l}~|~l\geq0\}$ of almost complex structures. 
\end{lemma}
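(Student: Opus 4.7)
The plan is to build $\{J_{Y_{1,l}}\}_{l\geq 0}$ by induction on $l$, combining the two standard ingredients for consistent perturbation data: contractibility of the space $\mathcal{J}(T^*C,\omega)$ of $\omega$-compatible almost complex structures, and the stratified structure of the moduli spaces $\mathcal{Y}_{1,l}$ provided by Lemma~\ref{lemma: strip-like ends and necks}. This mirrors the standard construction of consistent perturbation data in $A_\infty$/wrapped Fukaya theory, as implemented for HDHF in \cite{colin2020applications,honda2024jems}.

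The base case $l=0$ is straightforward: $\mathcal{Y}_1$ is a single point, and it suffices to pick any $J_{Y_1}\colon Y_1\to\mathcal{J}(T^*C,\omega)$ whose restriction to each strip-like end $e_i$ is the fixed $J_{t_i}$; such a choice exists because $\mathcal{J}(T^*C,\omega)$ is path-connected.

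For the inductive step, suppose that $J_{Y_{1,l'}}$ has been chosen consistently for all $l'<l$. In the Deligne-Mumford-type compactification $\overline{\mathcal{Y}}_{1,l}$, the codimension-one boundary strata correspond to configurations in which either some interior marked points escape to infinity along a strip-like end, or a strip-like neck develops, exactly as in Lemma~\ref{lemma: strip-like ends and necks}(2). By Lemma~\ref{lemma: strip-like ends and necks}(3), each connected component of the cut domain is some $D_{m',l'}$ with $m'+l'<2+l$; hence on any such boundary stratum the values of $J_{Y_{1,l}}$ are already prescribed by the inductive data, glued using the $r$-invariance requirement on necks and ends. The space of smooth extensions of this prescribed boundary data to a map $Y_{1,l}\to\mathcal{J}(T^*C,\omega)$ is an affine space of sections of a bundle with contractible fibers, hence nonempty and contractible; pick any such extension to serve as $J_{Y_{1,l}}$.

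The main technical obstacle is coherence of the extension at corners of $\overline{\mathcal{Y}}_{1,l}$, where two or more degenerations occur simultaneously. As in the standard Fukaya-categorical construction, this is resolved by ordering the induction by the partial order on strata (not merely by the integer $l$), first fixing the extension in collar neighborhoods of the deepest strata and then propagating outward. Smoothness across strata of codimension $\geq 2$ then reduces to a parametric Whitney-type extension statement inside the contractible space $\mathcal{J}(T^*C,\omega)$, yielding a consistent collection $\{\mathcal{J}_{1,l}~|~l\geq 0\}$ as desired.
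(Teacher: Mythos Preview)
Your proposal is correct and follows the same approach as the paper, namely an inductive construction relying on the contractibility of $\mathcal{J}(T^*C,\omega)$; the paper's own proof is a one-line invocation of exactly these two ingredients, and you have simply spelled out the standard details.
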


\begin{proof}
    This is an easy inductive construction, which uses the fact that $\mathcal{J}(T^*C,\omega)$ is contractible.
\end{proof}

\begin{definition}
    A {\em consistent collection of perturbation data for $\{\mathcal{Y}_{1,l}\}$} consists of perturbation data for each $\mathcal{Y}_{1,l}$ such that $\{\mathcal{J}_{1,l}\}$ is a consistent collection of almost complex structures.
\end{definition}

\subsection{Moduli spaces and transversality}

Given $\mathbf{y}\in CW(\sqcup_{i=1}^\kappa T^*_{q_i}C)$ and $\alpha,\beta\in \mathfrak{I}$, where $\mathfrak{I}$ is given by \eqref{eqn: I}, let $\mathcal{T}^l(\mathbf{p}_\alpha,\mathbf{y},\mathbf{p}_\beta)$ be the set of $((F,j),u)$, where $(F,j)$ is a compact Riemann surface with boundary, $\mathbf{w}_0,\mathbf{w}_1,\mathbf{w}_2$ are disjoint $\kappa$-tuples of boundary marked points of $F$, $\dot F=F\setminus\cup_i \mathbf{w}_i$, and
\begin{equation*}
    u\colon \dot{F}\to Y_1\times T^*C
\end{equation*}
is a continuous map such that
\begin{equation}
    (du-X_{H_{Y_1}}\otimes d\tau)^{0,1}_{j,J_{Y_{1,l}}}=0,
\end{equation}
and the following hold:
\begin{enumerate}
    \item For $m=0,1$, $\pi_{T^*C}\circ u(z)\in\sqcup_{i}T_{q_i}^*C$ if $\pi_{Y_1}\circ u(z)\subset\partial_m Y_1$. Each component of $\partial\dot F$ that projects to $\partial_m Y_1$ maps to a distinct $T^*_{q_i}C$.
    \item $\pi_{T^*C}\circ u(z)\in\Sigma$ if $\pi_{Y_1}\circ u(z)\subset\partial_2 Y_1$. \label{sigma}
    \item $\pi_{T^*C}\circ u$ tends to $\mathbf{y}$, $\mathbf{p}_\beta$ as $s_1,s_2\to +\infty$ and tends to $\mathbf{p}_\alpha$ as $s_0\to-\infty$.
    \item $\pi_{Y_1}\circ u$ is a $\kappa$-fold branched cover of $Y_1$, where we assume the branch points are simple and correspond to the marked points on $Y_{1,l}$.
\end{enumerate}
Here $\pi_{Y_l}$ and $\pi_{T^*C}$ are projections of $Y_l\times T^*C$ to the first and second factors.

We then set
$$\mathcal{T}(\mathbf{p}_\alpha,\mathbf{y},\mathbf{p}_\beta)=\sqcup_{l\geq0}\mathcal{T}^l(\mathbf{p}_\alpha,\mathbf{y},\mathbf{p}_\beta).$$
Note that if $((F,j),u)\in\mathcal{T}^l(\mathbf{p}_\alpha,\mathbf{y},\mathbf{p}_\beta)$, then $\chi(F)=\kappa-l$ by the Riemann-Hurwitz formula.

\begin{figure}[ht]
    \centering
    \includegraphics[width=10cm]{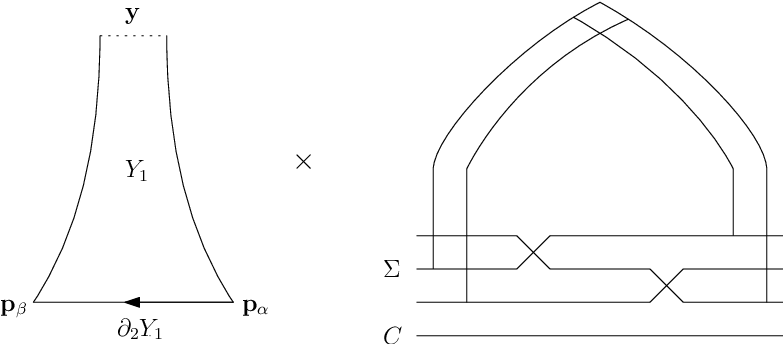}
    \caption{}
    \label{fig: F-hol}
\end{figure}

\begin{lemma} \label{lemma-dim}
    For a generic consistent collection of perturbation data for $\{\mathcal{Y}_{1,l}\}$, $\mathcal{T}(\mathbf{p}_\alpha,\mathbf{y},\mathbf{p}_\beta)$ is of dimension $0$ and consists of discrete regular curves for all $\mathbf{p}_\alpha$, $\mathbf{y}$ and $\mathbf{p}_\beta$.
\end{lemma}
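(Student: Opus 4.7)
The strategy follows the standard Floer-theoretic pattern: combine an index calculation with a Sard--Smale transversality argument on the universal moduli space. This parallels the proof of the corresponding regularity statement in the construction of wrapped HDHF in \cite{honda2024jems}, the only new ingredient being the Lagrangian boundary condition on $\Sigma$ along $\partial_2 Y_1$ in place of a second cotangent fiber.

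\emph{Virtual dimension.} First, one computes the expected dimension of $\mathcal{T}^l(\mathbf{p}_\alpha,\mathbf{y},\mathbf{p}_\beta)$ via Riemann--Roch. The Fredholm index receives contributions from the asymptotic chords at $\mathbf{y},\mathbf{p}_\alpha,\mathbf{p}_\beta$, all of which are degree-$0$ generators of wrapped HDHF of cotangent fibers by \cite{honda2024jems}, and from the Maslov class of $\Sigma$, which vanishes by Condition~\ref{condition: real exact}. Since $\pi_{Y_1}\circ u\colon F\to Y_1$ is a $\kappa$-fold branched cover with $l$ simple branch points, Riemann--Hurwitz gives $\chi(F)=\kappa-l$, and the simple branch points contribute a $-l$ correction to the Fredholm index. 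This is compensated by the $l$ additional parameters of $\mathcal{Y}_{1,l}$ controlling the positions of the interior marked points (and hence the images of the branch points under $\pi_{Y_1}\circ u$), yielding total virtual dimension $0$.

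\emph{Transversality.} Working on the universal moduli space over the space of consistent collections of perturbation data, one applies a standard Sard--Smale argument. Because each $J_{Y_{1,l}}$ may be varied freely through $\mathcal{J}(T^*C,\omega)$ on the interior of $Y_{1,l}$ away from the strip-like ends and necks where it is pinned by consistency, the universal linearized operator is surjective at every somewhere-injective curve; somewhere injectivity at a regular point of the branched cover follows from nondegeneracy of all chords and from distinct sheets having distinct asymptotes. The most delicate point, and the principal obstacle, is transversality at the simple branch points of $\pi_{Y_1}\circ u$ themselves, where the map fails to be immersed in the $Y_1$-factor. The remedy, as in \cite{honda2024jems}, is precisely the enlargement of the parameter space from $\mathcal{Y}_1$ to $\mathcal{Y}_{1,l}$ in Definition~\ref{defn: perturbation data for Y1l}: infinitesimal variations of the positions of the interior marked points, coupled with variations of $J_{Y_{1,l}}$ supported in small neighborhoods of those points, surject onto the cokernel of the linearized operator at the branch locus. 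Combining this with the index-$0$ count then gives a $0$-dimensional moduli space of isolated regular curves, which is the claim.
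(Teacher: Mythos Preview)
Your proposal is correct and follows the same approach as the paper's (very terse) proof: an index computation in the style of \cite[Theorem~2.3]{honda2024jems} together with standard transversality coming from the freedom in the domain-dependent $J_{Y_{1,l}}$. One minor inaccuracy worth fixing: real exactness (Condition~\ref{condition: real exact}) controls actions, not gradings, so it is not the reason the Maslov contribution from $\Sigma$ vanishes.
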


\begin{proof}
    The virtual dimension of $\mathcal{T}(\mathbf{p}_\alpha,\mathbf{y},\mathbf{p}_\beta)$ is $0$ by an index computation similar to that of \cite[Theorem 2.3]{honda2024jems}.
    The transversality of $\mathcal{T}(\mathbf{p}_\alpha,\mathbf{y},\mathbf{p}_\beta)$ is standard due to the flexible choice of almost complex structures.
\end{proof}

\begin{lemma}
    \label{lemma-cpt}
    For a generic consistent collection of perturbation data for $\{\mathcal{Y}_{1,l}\}$ and fixed $\mathbf{p}_\alpha$, $\mathbf{y}$, $\mathbf{p}_\beta$, and $l\geq0$, the moduli space $\mathcal{T}^l(\mathbf{p}_\alpha,\mathbf{y},\mathbf{p}_\beta)$ consists of finitely many curves.
\end{lemma}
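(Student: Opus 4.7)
The plan is to apply Gromov compactness to $\mathcal{T}^l(\mathbf{p}_\alpha, \mathbf{y}, \mathbf{p}_\beta)$ after establishing the standard a priori bounds and ruling out the usual degenerations. First, I would establish an a priori energy bound for curves $u\in\mathcal{T}^l(\mathbf{p}_\alpha,\mathbf{y},\mathbf{p}_\beta)$. Since $\mathbf{p}_\alpha$, $\mathbf{y}$, and $\mathbf{p}_\beta$ are fixed, the Hamiltonian action difference between the asymptotics at the positive and negative strip-like ends is fixed. Combined with the exactness of $\Sigma$ (Condition~\ref{condition: real exact}) and of the cotangent fibers $T^*_{q_i}C$, applying Stokes' theorem to $u^*(\lambda - H_{Y_1}\,d\tau)$ on $\dot F$ yields a uniform bound on the geometric energy of $u$ purely in terms of primitives of $\lambda$ on the boundary Lagrangians and the actions of the asymptotic orbits and intersection points.

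Next, I would establish a $C^0$ bound in the $T^*C$-factor. Since $H_{Y_1}(q,p)=\tfrac{1}{2}|p|^2$ outside a compact set and $J_{Y_{1,l}}$ may be chosen of contact type at infinity, the function $|p|^2\circ\pi_{T^*C}\circ u$ is subharmonic where $|p|$ is large, and the standard maximum principle for symplectizations (as used throughout wrapped Floer theory on $T^*C$) confines the image of $u$ to a compact subset of $T^*C$. In the base direction the image of $\pi_{Y_1}\circ u$ automatically lies in $Y_1$ by conditions (1)--(4) in the definition of $\mathcal{T}^l$.

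Third, I would rule out bubbling. Since $(T^*C,d\lambda)$ is exact and all three Lagrangian boundary conditions $T^*_{q_i}C$ and $\Sigma$ are exact (the last by Condition~\ref{condition: real exact}), no nonconstant $J_{Y_{1,l}}$-holomorphic sphere nor holomorphic disk with boundary on any of these Lagrangians can exist. Hence any Gromov limit has no nontrivial bubble components, and the only possible degenerations are breakings at the strip-like ends $e_0,e_1,e_2$ of the target or in the base direction of $Y_{1,l}$ (a branch point of $\pi_{Y_1}\circ u$ escaping into a strip-like end of $Y_1$, or two branch points colliding in the interior).

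The main obstacle will be controlling these remaining base-direction degenerations, since they involve the interplay between domain degenerations of $Y_{1,l}$ and the branched cover structure of $\pi_{Y_1}\circ u$. I would argue that for fixed asymptotics $(\mathbf{p}_\alpha,\mathbf{y},\mathbf{p}_\beta)$ and fixed $l$, any such broken limit consists of a principal component in some $\mathcal{T}^{l'}$ together with auxiliary Floer-type strips and, in the cases where a branch point escapes into a strip-like end, an HDHF breaking whose components have matching asymptotics, with the sum of their virtual dimensions equal to $\dim\mathcal{T}^l=0$ minus the codimension of the breaking stratum. Since $\mathcal{T}^l$ is already $0$-dimensional and transversely cut out by Lemma~\ref{lemma-dim} for generic consistent perturbation data, every such broken configuration has strictly negative virtual dimension and the corresponding strata are generically empty. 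Therefore $\mathcal{T}^l(\mathbf{p}_\alpha,\mathbf{y},\mathbf{p}_\beta)$ is already sequentially compact, and combined with the discreteness supplied by Lemma~\ref{lemma-dim} this yields finiteness.
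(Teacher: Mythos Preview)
Your proposal is correct and takes essentially the same approach as the paper, which dispatches the lemma in a single sentence: ``This is standard Gromov compactness since there is an energy bound for curves in $\mathcal{T}^l(\mathbf{p}_\alpha,\mathbf{y},\mathbf{p}_\beta)$.'' You have simply unpacked that standard argument (energy bound from exactness, $C^0$-bound via the maximum principle, exclusion of bubbles by exactness, and exclusion of breaking by the index-zero condition from Lemma~\ref{lemma-dim}) in detail.
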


\begin{proof} 
    This is standard Gromov compactness since there is an energy bound for curves in $\mathcal{T}^l(\mathbf{p}_\alpha,\mathbf{y},\mathbf{p}_\beta)$. 
\end{proof}

\subsection{Definition of the map \texorpdfstring{$\mathcal{F}_{\op{hol}}$}{Fhol}} \label{subsection: defn of F hol}

Given $u\in\mathcal{T}(\mathbf{p}_\alpha,\mathbf{y},\mathbf{p}_\beta)$, the restriction of $\pi_{T^*C} \circ u$ to $(\pi_{Y_1}\circ u)^{-1}(\partial_2 Y_1)$ gives an element $\mathcal{E}(u)\in\op{Mat}(N^{\kappa},\op{BSk}_\kappa(\Sigma))$ --- called the {\em evaluation map of $u$} --- which is supported in the $\alpha\beta$-entry of the $N^\kappa\times N^\kappa$-matrix.  Here we are fixing a parametrization $\phi_{Y_1}:[0,1]\to \bdry_2 Y_1$.

We then define
\begin{gather}\label{eqn: Fhol}
    \mathcal{F}_{\op{hol}}\colon CW(\sqcup_{i=1}^\kappa T^*_{q_i}C)\to \op{Mat}(N^{\kappa},\op{BSk}_\kappa(\Sigma)),\\
    \nonumber\mathbf{y}\mapsto\sum_{u\in\mathcal{T}(\mathbf{p}_\alpha,\mathbf{y},\mathbf{p}_\beta)} (-1)^{\sharp(u)}\cdot\hbar^{\kappa-\chi(F)}\cdot\mathcal{E}(u).
\end{gather}
Here $\sharp(u)\in\mathbb{Z}/2\mathbb{Z}$ denotes the orientation of $u$, which depends on the spin structure on $\Sigma$. 
A canonical choice of spin structure on $\Sigma$ is induced from the spin structure on $C$, which necessarily requires the additional relation (\ref{eq-skein-branch}). See \cite[Section 6.2.2]{nho2023family} for details.
Since $CW(\sqcup_{i=1}^\kappa T^*_{q_i}C)$ is supported in degree $0$, $\mathcal{F}_{\op{hol}}$ is defined on the cohomology level.

\subsection{Proof of Theorem~\ref{thm: map of algebras holomorphic version}} \label{subsection: proof of thm map of algebras holomorphic version}

In this subsection we prove Theorem~\ref{thm: map of algebras holomorphic version}. The proof is similar to that of \cite[Proposition 6.5]{honda2024jems} and we show that
\begin{equation}\label{eqn: algebra map}
    \mathcal{F}_{\op{hol}}(\mu^2(\mathbf{y},\mathbf{y}'))=\mathcal{F}_{\op{hol}}(\mathbf{y})\mathcal{F}_{\op{hol}}(\mathbf{y}'),
\end{equation}
for any $\mathbf{y},\mathbf{y}'\in CW(\sqcup_{i}T_{q_i}^*C)$, where $\mu^2$ is the product map of HDHF and the right-hand side of Equation~\eqref{eqn: algebra map} is a concatenation written in composition notation.

First observe the following: let $\psi^\rho$ be the time-$\op{log}\rho$ flow of the Liouville vector field of $pdq$. Since $(\psi^\rho)^*\omega=\rho\omega$, following \cite[Lemma 3.5]{abouzaid2012wrapped} we have:

\begin{lemma}
    There is an isomorphism
    \begin{equation*}
        CW^*(\boldsymbol{L_0},\boldsymbol{L_1};H, J_t)\simeq CW^*(\psi^\rho(\boldsymbol{L_0}),\psi^\rho(\boldsymbol{L_1});\tfrac{H}{\rho}\circ\psi^\rho, \psi^\rho_* J_t).
    \end{equation*}
\end{lemma}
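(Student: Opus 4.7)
The plan is to exhibit the isomorphism by transporting all Floer-theoretic data along $\psi^\rho$. Since $\psi^\rho$ is a diffeomorphism of $T^*C$ satisfying $(\psi^\rho)^*\omega=\rho\,\omega$, it is a conformal symplectomorphism and hence conjugates Hamiltonian dynamics and Cauchy--Riemann equations in a controlled manner. Concretely, under any conformal symplectomorphism $\Psi$ with $\Psi^*\omega = c\,\omega$, a direct computation using $\iota_{\Psi_*X_F}\omega = (\Psi^{-1})^*(c\,dF)$ gives the transformation law
\begin{equation*}
    \Psi_* X_F \;=\; c\, X_{F\circ\Psi^{-1}}.
\end{equation*}
Applying this with $\Psi=\psi^\rho$ (and matching the conventions of \cite{abouzaid2012wrapped}), one verifies that time-$1$ Hamiltonian chords of $X_H$ with endpoints on $\boldsymbol{L_0},\boldsymbol{L_1}$ correspond bijectively to time-$1$ Hamiltonian chords of $X_{\tfrac{H}{\rho}\circ\psi^\rho}$ with endpoints on $\psi^\rho(\boldsymbol{L_0}),\psi^\rho(\boldsymbol{L_1})$. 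Nondegeneracy is preserved because $\psi^\rho$ is a diffeomorphism, so the generator sets of the two chain complexes are identified.

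The next step is to check that Floer trajectories transfer analogously. The Floer equation $(du - X_H\otimes dt)^{0,1}_{J_t}=0$ on the left, post-composed with $\psi^\rho$, becomes the corresponding equation for the data $(\psi^\rho_* J_t,\, X_{\tfrac{H}{\rho}\circ\psi^\rho})$ on the right; this follows from the chain rule applied to the nonlinear Cauchy--Riemann operator together with the transformation law above. Consequently the moduli spaces of trajectories are in bijection, preserving indices and --- after transporting the spin structure used for orientations via $\psi^\rho$ --- signs. The Floer differential is thereby intertwined, yielding the chain-level isomorphism, and in particular the claimed isomorphism on cohomology.

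The main technical subtlety lies in the behavior at infinity. The Liouville rescaling alters the effective slope of a quadratic Hamiltonian: schematically, $\tfrac{1}{2}|p|^2$ is sent to a rescaled quadratic, which changes the wrapping rate in the Reeb direction. One must therefore verify that $\tfrac{H}{\rho}\circ\psi^\rho$ remains admissible in the wrapped setting and that the usual maximum principle continues to trap Floer trajectories in compact subsets of $T^*C$. This is the hardest point, and it is handled exactly as in Abouzaid--Seidel: both Hamiltonians are quadratic at infinity with admissible coefficients, and because $\psi^\rho$ is a global diffeomorphism of $T^*C$, a priori $C^0$-bounds on one side pull back to $C^0$-bounds on the other, so Gromov compactness and transversality transfer directly between the two sides. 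This completes the sketch.
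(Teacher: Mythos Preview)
The paper does not supply its own proof of this lemma; it is stated immediately after the sentence ``following \cite[Lemma 3.5]{abouzaid2012wrapped} we have'' and is simply quoted from Abouzaid. Your sketch --- transporting generators and Floer trajectories along the conformal symplectomorphism $\psi^\rho$ and checking admissibility at infinity --- is exactly the argument behind that cited result, so your approach agrees with the paper's (by reference).
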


In order to define the relevant moduli spaces $\mathcal{T}^l(\mathbf{p}_\alpha,\mathbf{y}',\mathbf{y},\mathbf{p}_\beta)$, we first construct a {\em consistent collection $(\{H_{Y_m}\},\{J_{Y_{m,l}}\}, \{\alpha_{Y_m}\})$ of perturbation data for $\{\mathcal{Y}_{m,l}\}_{m\leq 2}$}, following the procedure of \cite[Section 4]{abouzaid2012wrapped}.  We have already treated the $m=1$ case, as $\alpha_{Y_1}$ is not necessary.

Consider a family of Hamiltonian functions parametrized by $Y_2$:
\begin{equation}
    H_{Y_2}\colon Y_2\to \mathcal{H}(T^*C),
\end{equation}
which takes values in $\mathcal{H}^\epsilon(T^*C)$ near $\partial_3 Y_2$. We further require the following:
\begin{enumerate}
    \item $H_{Y_2}$ coincides with $H_{Y_1}$ on each of the two pieces of $Y_1$ in the right-hand degeneration of Figure \ref{fig: F-algebra};
    \item referring to the left-hand degeneration of Figure \ref{fig: F-algebra}, where $Y_2$ degenerates into A and B,  $H_{Y_2}=H_0$ on the strip-like ends $e_1,e_2$ of A,
    \begin{equation*}
        H_{Y_2}=({H_0}\circ\psi^2)/4
    \end{equation*}
    on the strip-like end $e_0$ of A, and $H_{Y_2}=(H_{Y_1}\circ\psi^2)/4$ on B.
\end{enumerate}

The choice of $J_{Y_{2,l}}$ is similar and coincides with $J_{t_1}$ and $J_{t_2}$ on the strip-like ends $e_1,e_2$. We further require that 
\begin{enumerate}
    \item $J_{Y_{2,l}}=J_{Y_1}$ on the two copies of $Y_1$ of the right-hand side degeneration of Figure \ref{fig: F-algebra};
    \item $J_{Y_{2,l}}=\psi^2_*J_{t_0}$ on the strip-like end $e_0$ of A and $J_{Y_{2,l}}=\psi^2_*J_{Y_{1,l'}}$ on $\mathrm{B}=Y_{1,l'}$.
\end{enumerate}

The last piece of perturbation data is a closed $1$-form $\alpha_{Y_2}$ on $Y_2$ such that
\begin{enumerate}
    \item $\alpha_{Y_2}=d\tau$ on the two copies of $Y_1$ of the right-hand side degeneration of Figure \ref{fig: F-algebra};
    \item $\alpha_{Y_2}=2d\tau$ near $\partial_3 Y_2$;
    \item $\alpha_{Y_2}=dt_i$ on $e_i$, $i=1,2$, and $\alpha_{Y_2}=2dt_0$ on the end $e_0$ of A and $\alpha_{Y_2}=2d\tau$ on B.
\end{enumerate}

The existence of $(H_{Y_2},\{J_{Y_{2,l}}\},\alpha_{Y_2}\})$ satisfying the above is straightforward. 

\s
Following the terminology of \cite[Section 3.5]{abouzaid2012wrapped}, we say the ends $e_1,e_2$ are of {\em weight} $1$ and $e_0$ is of {\em weight} $2$.

Now we define the moduli space $\mathcal{T}^l(\mathbf{p}_\alpha,\mathbf{y}',\mathbf{y},\mathbf{p}_\beta)$ as the set of $(F,u)$, where $\mathbf{y},\mathbf{y}'\in CW(\sqcup_{i=1}^\kappa T^*_{q_i}C)$, $(F,j)$ is a compact Riemann surface with boundary, $\mathbf{w}_0,\dots,\mathbf{w}_3$ are disjoint $\kappa$-tuples of boundary marked points of $F$, $\dot F=F\setminus\cup_i \mathbf{w}_i$, and
\begin{equation*}
    u\colon \dot{F}\to Y_2\times T^*C
\end{equation*}
is a continuous map such that
\begin{equation}
    (du-X_{H_{Y_2}}\otimes \alpha_{Y_2})^{0,1}_{j,J_{Y_{2,l}}}=0,
\end{equation}
and the following hold:
\begin{enumerate}
    \item For $m=0,1,2$, $\pi_{T^*C}\circ u(z)\in\sqcup_{i}T_{q_i}^*\Sigma$ if $\pi_{Y_2}\circ u(z)\subset\partial_m Y_2$. Each component of $\partial\dot F$ that projects to $\partial_m Y_2$ maps to a distinct $T^*_{q_i}C$.
    \item $\pi_{T^*C}\circ u(z)\in\Sigma$ if $\pi_{Y_2}\circ u(z)\subset\partial_3 Y_2$.
    \item $\pi_{T^*C}\circ u$ tends to $\mathbf{y}'$, $\mathbf{y}$, $\mathbf{p}_\beta$ as $s_1,s_2,s_3\to +\infty$ and tends to $\mathbf{p}_\alpha$ as $s_0\to-\infty$.
    \item $\pi_{Y_2}\circ u$ is a $\kappa$-fold branched cover of $Y_2$, where the simple branched values correspond to the marked points on $Y_{2,l}$.
\end{enumerate}
We define $\mathcal{T}(\mathbf{p}_\alpha,\mathbf{y}',\mathbf{y},\mathbf{p}_\beta)=\sqcup_{l\geq0}\mathcal{T}^l(\mathbf{p}_\alpha,\mathbf{y}',\mathbf{y},\mathbf{p}_\beta)$.

\begin{lemma} \label{lemma-dim-2}
    For a generic consistent collection of perturbation data for $\{\mathcal{Y}_{m,l}\}_{m\leq 2}$, $\mathcal{T}(\mathbf{p}_\alpha,\mathbf{y}',\mathbf{y},\mathbf{p}_\beta)$ is a smooth manifold of dimension $1$ and admits a compactification.
\end{lemma}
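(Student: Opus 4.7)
The strategy is to parallel the proof of Lemma~\ref{lemma-dim} in the one-higher-dimensional setting, upgrading the index computation, transversality, and compactness arguments to account for the extra modulus of $\mathcal{Y}_2$.

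\textbf{Dimension.} I would first verify that the virtual dimension is $1$. The expected dimension of $\mathcal{T}^l(\mathbf{p}_\alpha,\mathbf{y}',\mathbf{y},\mathbf{p}_\beta)$ is obtained from the Fredholm index of the linearized Cauchy--Riemann operator together with the dimension of the domain moduli. Relative to the zero-dimensional count in Lemma~\ref{lemma-dim}, there are three new contributions: the added positive input at $p_2$ (net $0$ to the index when one accounts for the removed factor of $\R$ in the dimension of $\mathcal{Y}_2$ versus $\mathcal{Y}_1$), the extra modulus of $\mathcal{Y}_2$ itself (contributing $1$), and the $l$ free interior marked points balanced against the $l$ simple branch points demanded of $\pi_{Y_2}\circ u$ (contributing $0$ once one imposes the codimension-$2$ simple-branching condition). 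The Riemann--Hurwitz relation $\chi(F)=\kappa-l$ is unchanged. Carrying out the computation exactly as in \cite[Theorem 2.3]{honda2024jems} and in the proof of Lemma~\ref{lemma-dim} then yields virtual dimension $1$.

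\textbf{Transversality.} The consistent collection of perturbation data $(\{H_{Y_m}\},\{J_{Y_{m,l}}\},\{\alpha_{Y_m}\})_{m\le 2}$ is constructed inductively, prescribed on the strip-like ends, necks, and previously-chosen boundary strata of $\mathcal{Y}_{2,l}$, and chosen generically in the interior. Because $\mathcal{J}(T^*C,\omega)$ is contractible and the constraints imposed on the prescribed pieces are relatively open and smooth, a standard Sard--Smale argument (as in \cite{honda2024jems} or \cite{abouzaid2012wrapped}) yields genericity of the universal moduli space, and hence smoothness of $\mathcal{T}(\mathbf{p}_\alpha,\mathbf{y}',\mathbf{y},\mathbf{p}_\beta)$ as a $1$-manifold. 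The simple-branching condition on $\pi_{Y_2}\circ u$ is cut out transversely for generic domain-dependent data.

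\textbf{Compactification.} For each fixed $l$, an energy bound analogous to Lemma~\ref{lemma-cpt} rules out infinite bubbling, and SFT-style Gromov compactness supplies a compactification by broken and nodal curves. The boundary strata come from three kinds of degeneration: (i) breaking along the two types of $\mathcal{Y}_2$ boundary depicted in Figure~\ref{fig: F-algebra} --- the left-hand degeneration producing a component on A with output $\mathbf{y}''$ feeding into a component on $\mathrm{B}=Y_{1,l'}$, matching $\mathcal{F}_{\op{hol}}(\mu^2(\mathbf{y},\mathbf{y}'))$, and the right-hand degeneration producing two $Y_1$-type curves glued along shared asymptotics on the cotangent fibers, matching $\mathcal{F}_{\op{hol}}(\mathbf{y})\mathcal{F}_{\op{hol}}(\mathbf{y}')$ via concatenation of braids in $\Sigma$; (ii) collision of interior marked points with each other or with the boundary, which by the consistency requirements of Lemma~\ref{lemma: strip-like ends and necks} reduces to strata already covered by (i); and (iii) Floer-style breaking along the strip-like ends, reproducing the HDHF differential, which vanishes on cohomology. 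Summing over $l$ poses no problem because, for fixed asymptotics, only finitely many $l$ contribute a nonempty moduli space by the action/energy bound.

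\textbf{Main obstacle.} The technical heart is (iii)--(i) --- verifying that the only noncompactness of $\sqcup_l \mathcal{T}^l$ comes from genuine codimension-$1$ strata of $\mathcal{Y}_{2,l}$ and from Floer breaking, and that no disk bubbles or sphere bubbles appear off the Lagrangian boundary conditions $\sqcup_i T^*_{q_i}C$ and $\Sigma$. Disk bubbling on $T^*_{q_i}C$ is excluded by exactness of the cotangent fibers and the choice of $H_{Y_2}$, and disk bubbling on $\Sigma$ is excluded by Condition~\ref{condition: real exact}; sphere bubbling is excluded by the exactness of $(T^*C,\lambda)$. The remaining compactness issue is the coherent matching of the asymptotic parametrizations under gluing across the two $Y_2$-degenerations, which is precisely what the weight-$2$ rescaling by $\psi^\rho$ in the prescription of $(H_{Y_2},J_{Y_{2,l}},\alpha_{Y_2})$ is designed to accommodate, following \cite[Section 4]{abouzaid2012wrapped}.
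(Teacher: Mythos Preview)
Your proposal is correct and follows essentially the same approach as the paper: the index computation is referred back to \cite[Theorem~2.3]{honda2024jems}, transversality is declared standard from the freedom in the domain-dependent $J$, and compactness comes from Gromov compactness with an energy bound. The paper's own proof is in fact much terser than yours---it simply asserts that transversality and compactness are ``standard'' and notes that one may work with any spin structure defined on all of $\Sigma$.

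One remark: your enumeration of boundary strata in the Compactification paragraph goes beyond what the lemma claims (the lemma only asserts that a compactification \emph{exists}), and as an enumeration it is incomplete. The paper, in the text immediately \emph{following} this lemma, lists four types (T1)--(T4) of codimension-$1$ degenerations; you have covered the analogues of (T1) and (T2), but you omit (T3), nodal degeneration along $\Sigma$ (which accounts for the HOMFLY skein relation \eqref{eq-skein'}), and (T4), the event where $\mathcal{E}(u)$ passes through a ramification point in $\widetilde Z$ (which accounts for relation \eqref{eq-skein-branch}). These are not needed for the lemma itself, but if you intend this paragraph to feed into the proof of \eqref{eqn: algebra map}, you will need to add them.
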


\begin{proof}
    The virtual dimension of $\mathcal{T}(\mathbf{p}_\alpha,\mathbf{y}',\mathbf{y},\mathbf{p}_\beta)$ is $1$ by an index formula similar to that of \cite[Theorem 2.3]{honda2024jems}.  The transversality and compactness of $\mathcal{T}(\mathbf{p}_\alpha,\mathbf{y}',\mathbf{y},\mathbf{p}_\beta)$ are standard. Note that for the purposes of this lemma we may use an arbitrary spin structure that is defined on all of $\Sigma$. 
\end{proof}

Next, using the pullback spin structure $\pi^*\mathfrak{s}$ on $\Sigma\setminus \widetilde Z$, we consider all possible codimen\-sion-$1$ boundary degenerations of $\mathcal{T}(\mathbf{p}_\alpha,\mathbf{y}',\mathbf{y},\mathbf{p}_\beta)$. 
There are four types of boundary degenerations:
    \begin{enumerate}
        \item[(T1)] $\coprod_{\mathbf{y}'',\chi'+\chi''-\kappa=\chi} \mathcal{M}^{\operatorname{ind}=0,\chi'}(\mathbf{y}',\mathbf{y},\mathbf{y}'') \times\mathcal{T}^{\operatorname{ind}=0,\chi''}(\mathbf{p}_\alpha,\mathbf{y}'',\mathbf{p}_\beta)$;
        \item[(T2)] $\coprod_{\mathbf{p}_\gamma,\chi'+\chi''-\kappa=\chi}\mathcal{T}^{\operatorname{ind}=0,\chi'}(\mathbf{p}_\gamma,\mathbf{y},\mathbf{p}_\beta)\times\mathcal{T}^{\operatorname{ind}=0,\chi''}(\mathbf{p}_\alpha,\mathbf{y}',\mathbf{p}_\gamma)$;
        \item[(T3)] the set $\partial_n \overline{\mathcal{T}}^{\operatorname{ind}=1, \chi}(\mathbf{p}_\alpha,\mathbf{y}',\mathbf{y},\mathbf{p}_\beta)$ with a nodal degeneration along $\Sigma$;
        \item[(T4)] at some point the $\alpha\beta$-entry of $\mathcal{E}(u)$ (recall $\mathcal{E}(u)$ is an $N^\kappa\times N^\kappa$ matrix and is supported only in the $\alpha\beta$-entry) intersects a point in $\widetilde{Z}$.
    \end{enumerate}
    (T1) is given on the left-hand side of Figure \ref{fig: F-algebra} 
    and contributes to $\mathcal{F}_{\op{hol}}(\mu^2(\mathbf{y},\mathbf{y}'))$, where $\mathcal{M}^{\operatorname{ind}=0,\chi'}(\mathbf{y}',\mathbf{y},\mathbf{y}'')$ is the moduli space computing $\mu^2(\mathbf{y},\mathbf{y}')$ in HDHF. 
    (T2) is given on the right-hand side of Figure \ref{fig: F-algebra} and contributes to $\mathcal{F}_{\op{hol}}(\mathbf{y})\mathcal{F}_{\op{hol}}(\mathbf{y}')$.
    A standard gluing argument shows that all contributions to $\mathcal{F}_{\op{hol}}(\mu^2(\mathbf{y},\mathbf{y}'))$ and $\mathcal{F}_{\op{hol}}(\mathbf{y})\mathcal{F}_{\op{hol}}(\mathbf{y}')$ come from such broken degenerations. 
    (T3) is given by Figure \ref{fig: F-skein} and corresponds to the HOMFLY skein relation (\ref{eq-skein'}) which is described in \cite{ekholm2021skeins} and \cite{honda2024jems}.
    In Case (T4), the orientations of the curves before and after touching the ramification point differ by $-1$ as explained in Remark~\ref{remark: explanation of eq-skein-branch}; this corresponds to Relation \eqref{eq-skein-branch}.

\begin{figure}[ht]
    \centering
    \includegraphics[width=13cm]{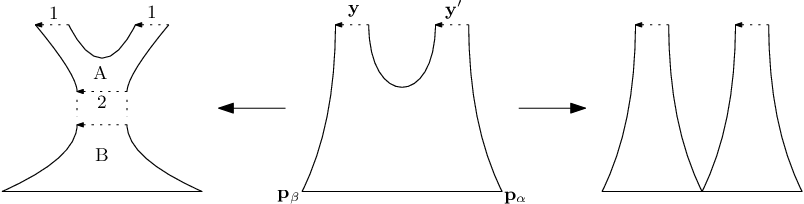}
    \caption{Two possible degenerations of ${\mathcal{T}}^{\operatorname{ind}=1, \chi}(\mathbf{p}_\alpha,\mathbf{y}',\mathbf{y},\mathbf{p}_\beta)$. The weights of ends of A are labeled.}
    \label{fig: F-algebra}
\end{figure}

\begin{figure}[ht]
    \centering
    \includegraphics[width=15cm]{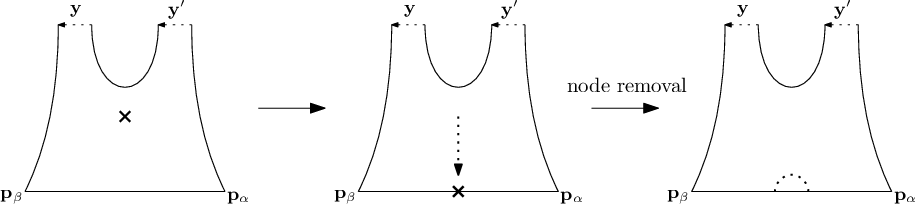}
    \caption{The nodal degeneration of ${\mathcal{T}}^{\operatorname{ind}=1, \chi}(\mathbf{p}_\alpha,\mathbf{y}',\mathbf{y},\mathbf{p}_\beta)$.}
    \label{fig: F-skein}
\end{figure}

\section{Modifications of \texorpdfstring{$\Sigma$}{Sigma}} \label{section: perturbing Sigma}

In order to pass from the Floer-theoretic approach to the hybrid Floer-Morse approach, we define two modifications of $\Sigma$.  

(1) Apply a $C^\infty$-small Hamiltonian perturbation near the ramification points on $\Sigma$ to convert branch points to cusp folds and swallowtails, which are more generic singularities; this is done in order to use Ekholm's adiabatic limit theorem \cite{ekholm2007morse} with minimal change. In the front projection of the Legendrian lift of $\Sigma$, a ramification point is modified to three cusp folds which intersect at three swallowtail singularities; see Figure \ref{fig: cusp}. 
\begin{figure}[ht]
    \centering
    \includegraphics[width=10cm]{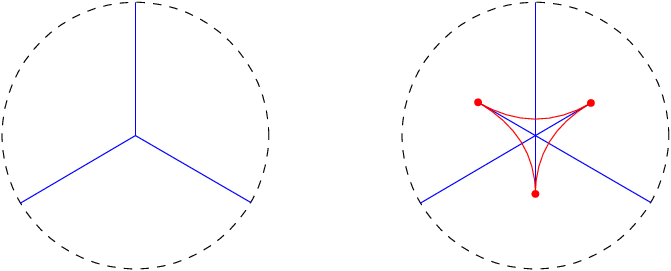}
    \caption{Left: the projection to $\mathbb{R}^2_{x,y}$ of the front of the Legendrian lift of $\Sigma$ near the ramification point. The blue rays denote the transversal intersection of two sheets.  Right: we apply a small Hamiltonian perturbation to $\Sigma$. The red edges are cusps and the red dots are swallowtail singularities.}
    \label{fig: cusp}
\end{figure}
We omit the details of this standard construction and instead refer the reader to \cite{casals22,casals2025spectralnetworksbettilagrangians}. Choose a sequence $\{\zeta_\nu\}_{\nu\in \Z_{>0}}$ of Hamiltonian perturbations of $\Sigma$ modeled on Figure~\ref{fig: cusp} such that the size of the ``cusp region'' (i.e., the region enclosed by the dashed circle on the right-hand side of Figure~\ref{fig: cusp}) tends to zero and denote $\Sigma_\nu\coloneqq\zeta_\nu(\Sigma)$.  More precisely, choose a decreasing sequence $\{R_\nu>0\}$ such that $R_1$ is small and $R_\nu\to0$ as $\nu\to\infty$. Fix a metric $g_0$ on $C$ and let $U_{R_\nu}$ be the union of all open $R_\nu$-disks in $C$ centered at points of $Z$.  We require the projection of the support of $\zeta_\nu$ and the folded region of $\Sigma_\nu$ to be inside $U_{R_\nu}$.

(2) Given a Lagrangian $\Sigma$ which is locally given by the graph $p_z=df(z)$ and $\epsilon>0$ small, we define $\Sigma^\epsilon$ as the Lagrangian locally given by $p_z=\epsilon df(z)$.  Also let $\pi_\epsilon:\Sigma^\epsilon\to \Sigma$ be the natural diffeomorphism, given by projecting along the fibers of $T^*C$.

\s
We define $\op{Mat}(N^{\kappa},\op{BSk}_\kappa(\Sigma_\nu^\epsilon))$ as in Definition~\ref{defn: matrix-valued braid skein algebra} by choosing the basepoints $\mathbf{q}$ away from $U_{R_1}$. Then there is a sequence of isomorphisms
\begin{equation*}
    (\zeta_\nu^{-1}\circ \pi_{\nu,\epsilon})_*:\op{Mat}(N^{\kappa},\op{BSk}_\kappa(\Sigma^\epsilon_\nu))\to \op{Mat}(N^{\kappa},\op{BSk}_\kappa(\Sigma)),
\end{equation*}
where $\pi_{\nu,\epsilon}$ is the natural diffeomorphism $\Sigma^\epsilon_\nu\to \Sigma_\nu$.

The spin structure on $\Sigma^\epsilon_\nu$ is induced from the one on $\Sigma$ via pushforward.

Let $\nu\in \Z_{>0}$ and $\epsilon>0$ small. We define $\mathcal{T}^l_{\nu,\epsilon}(\mathbf{p}_\alpha,\mathbf{y},\mathbf{p}_\beta)$ and $\mathcal{T}_{\nu,\epsilon}(\mathbf{p}_\alpha,\mathbf{y},\mathbf{p}_\beta)$ in the same way as $\mathcal{T}^l(\mathbf{p}_\alpha,\mathbf{y},\mathbf{p}_\beta)$ and $\mathcal{T}(\mathbf{p}_\alpha,\mathbf{y},\mathbf{p}_\beta)$, via the natural identifications of $\mathbf{p}_\alpha$, $\mathbf{p}_\beta$ on $\Sigma$ and $\mathbf{p}_\alpha$, $\mathbf{p}_\beta$ on $\Sigma^\epsilon_\nu$ and some abuse of notation. It is straightforward to verify that the analogs of Lemmas \ref{lemma-dim} and \ref{lemma-cpt} still hold for $\mathcal{T}^l_{\nu,\epsilon}(\mathbf{p}_\alpha,\mathbf{y},\mathbf{p}_\beta)$ and $\mathcal{T}_{\nu,\epsilon}(\mathbf{p}_\alpha,\mathbf{y},\mathbf{p}_\beta)$.

Then we define
\begin{gather}
    \mathcal{F}^{\nu,\epsilon}_{\op{hol}}\colon CW(\sqcup_{i=1}^\kappa T^*_{q_i}C)\to \op{Mat}(N^{\kappa},\op{BSk}_\kappa(\Sigma^\epsilon_\nu))
\end{gather}
in the same way as $\mathcal{F}_{\mathrm{hol}}$, by counting elements in $\mathcal{T}_{\nu,\epsilon}(\mathbf{p}_\alpha,\mathbf{y},\mathbf{p}_\beta)$.
All the lemmas from Section~\ref{section: the Floer-theoretic approach} also hold for $\mathcal{F}^{\nu,\epsilon}_{\op{hol}}$.

The proof of the following is similar to that of Lemma \ref{lemma-dim-2} and is omitted:

\begin{lemma}
    \label{lemma-limit-F}
    $\mathcal{F}_{\op{hol}}= (\zeta_\nu^{-1}\circ \pi_{\nu,\epsilon})_*\circ\mathcal{F}_{\op{hol}}^{\nu,\epsilon}$ for all $\nu\in \Z_{>0}$ and $\epsilon>0$.
\end{lemma}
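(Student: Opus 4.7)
The plan is to build a one-parameter interpolation between the boundary conditions $\Sigma$ and $\Sigma^\epsilon_\nu$ on $\partial_2 Y_1$ and run the same boundary-of-one-dimensional-moduli-space argument used in the proof of Lemma~\ref{lemma-dim-2}, only now with the parameter being an interpolating family of Lagrangians rather than a family of domains $Y_2$. Since $\Sigma_\nu=\zeta_\nu(\Sigma)$ is obtained from $\Sigma$ by a $C^\infty$-small Hamiltonian isotopy supported in $U_{R_\nu}$, and since the fiberwise rescaling $p_z\mapsto \epsilon p_z$ taking $\Sigma_\nu$ to $\Sigma^\epsilon_\nu$ is an exact Lagrangian isotopy, there is a compactly supported Hamiltonian isotopy $\{\Phi_r\}_{r\in\mathbb{R}}$ of $T^*C$ with $\Phi_r=\op{id}$ for $r\le 0$ and $\Phi_r=\zeta_\nu\circ\pi_{\nu,\epsilon}^{-1}$ for $r\ge 1$, carrying $\Sigma^\epsilon_\nu$ to $\Sigma$. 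Crucially, $\Phi_r$ can be chosen to fix $\mathbf{p}_\alpha$ and $\mathbf{p}_\beta$ and to send the ramification locus of $\Sigma^\epsilon_\nu$ onto the set of swallowtail/cusp singularities which (under $\zeta_\nu^{-1}$) correspond to the ramification points of $\Sigma$, so that the pullback spin structures match.

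Next I would enlarge the perturbation data of Section~\ref{subsection: consistent collection 1} to include, for each $Y_{1,l}$ and each $r\in\mathbb{R}$, a choice $(H_{Y_1,r},J_{Y_{1,l},r})$ that depends smoothly on $r$ and agrees with the original data for $\Sigma^\epsilon_\nu$ (resp.~$\Sigma$) for $r$ sufficiently negative (resp.~positive); the boundary condition on $\partial_2 Y_1$ for the $r$-slice is $\Phi_r(\Sigma^\epsilon_\nu)$. Define the parametrized moduli space
\[
    \mathcal{P}^l(\mathbf{p}_\alpha,\mathbf{y},\mathbf{p}_\beta)=\bigl\{(r,u)\bigm| r\in\mathbb{R},\ u\in \mathcal{T}^l_r(\mathbf{p}_\alpha,\mathbf{y},\mathbf{p}_\beta)\bigr\},
\]
where $\mathcal{T}^l_r$ is the $r$-dependent analog of $\mathcal{T}^l$. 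The Riemann-Roch index computation of Lemma~\ref{lemma-dim} picks up a single extra dimension from $r$, so $\mathcal{P}(\mathbf{p}_\alpha,\mathbf{y},\mathbf{p}_\beta)=\sqcup_l \mathcal{P}^l$ is a smooth one-dimensional manifold for a generic choice of interpolation, and a Gromov-type compactness argument (energy bounds are uniform in $r$ because $\Phi_r$ is compactly supported) produces a compactification $\overline{\mathcal{P}}$.

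I would then enumerate the codimension-one boundary strata of $\overline{\mathcal{P}}$, in exact analogy with the analysis after Lemma~\ref{lemma-dim-2}. There are four types: (i) ends at $r=-\infty$, contributing $(\zeta_\nu^{-1}\circ\pi_{\nu,\epsilon})_*\circ\mathcal{F}_{\op{hol}}^{\nu,\epsilon}(\mathbf{y})$ after applying the identification of braid skein algebras; (ii) ends at $r=+\infty$, contributing $\mathcal{F}_{\op{hol}}(\mathbf{y})$; (iii) strip breakings at an interior Hamiltonian chord, which produce pairs $(\mu^1\text{-differential})\times\mathcal{F}$, and vanish after passing to cohomology since $\mathbf{y}$ is a cycle; (iv) nodal boundary degenerations along the moving Lagrangian together with transversal crossings of ramification points, which by the arguments of Section~\ref{subsection: proof of thm map of algebras holomorphic version} (cases (T3) and (T4)) are exactly the HOMFLY skein relation \eqref{eq-skein'} and the branch relation \eqref{eq-skein-branch} in $\op{Mat}(N^\kappa,\op{BSk}_\kappa(\Sigma))$, hence vanish. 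Adding up, $\partial\overline{\mathcal{P}}=0$ yields
\[
    \mathcal{F}_{\op{hol}}(\mathbf{y}) \;=\; (\zeta_\nu^{-1}\circ\pi_{\nu,\epsilon})_*\circ\mathcal{F}_{\op{hol}}^{\nu,\epsilon}(\mathbf{y}),
\]
as desired.

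The main obstacle I anticipate is the book-keeping for the (T4)-type contributions: one must verify that under the Hamiltonian isotopy $\Phi_r$ the transversal crossings of evaluation curves with the ramification loci of the moving Lagrangian cancel coherently, so that the signs dictated by the pullback spin structure really do produce the branch relation \eqref{eq-skein-branch} on both $\Sigma$ and $\Sigma^\epsilon_\nu$ in a compatible way. This is a mild variant of the orientation analysis in Remark~\ref{remark: explanation of eq-skein-branch}, but must be carried out carefully since the ramification points move with $r$; once this is in place, transversality, compactness, and gluing are all routine.
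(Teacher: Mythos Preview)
Your proposal is correct and follows essentially the same cobordism strategy the paper gestures at by citing Lemma~\ref{lemma-dim-2}: interpolate the $\partial_2 Y_1$ boundary condition through an exact Lagrangian isotopy from $\Sigma^\epsilon_\nu$ to $\Sigma$, and read off the identity from the ends of the resulting one-dimensional parametrized moduli space, with interior degenerations absorbed by the skein relations \eqref{eq-skein'} and \eqref{eq-skein-branch}. Two cosmetic points: it is cleaner to take $r\in[0,1]$ rather than $r\in\mathbb{R}$ (your ``ends at $r=\pm\infty$'' are really just the boundary strata $r=0,1$), and $\Phi_r$ does not \emph{fix} $\mathbf{p}_\alpha,\mathbf{p}_\beta$ but rather carries them from their positions on $\Sigma^\epsilon_\nu$ to their positions on $\Sigma$---this is exactly what is needed.
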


\section{Homological variables}
\label{section: homological variables}

In this section we introduce the homological variable $c$ which extends the coefficient ring.

{\em Assume now that $C$ is a closed Riemann surface with at least one puncture.} We still have a branched cover $\pi: \Sigma \to C$ with $\Sigma\subset T_{\op{hol}}^*C$. Let $Z\subset C$ be the set of branch points, which are double branch points, and let $\widetilde Z\subset \Sigma$ be the set of ramification points. 

We define $\mathcal{B}\subset \Sigma$ to be a union of branch cuts given as follows: For each $p\in \widetilde Z$, choose two distinct arcs $b_p^+$ and $b_p^-$ that go from $p$ to punctures of $\Sigma$, so that:
\be
\item $\pi(b_p^+)=\pi(b_p^-)$ and goes from $\pi(p)\in Z$ to a puncture of $C$;
\item if $b_p^+$ (resp.\ $b_p^-$) lies on the graph of $df_i$ (resp.\ $df_j$) near $p$, then $\pi(b_p^+)$, pointing away from $\pi(p)$, is tangent to and directed by $\nabla(f_i-f_j)$ near $p$;
\item the arcs are oriented so that $b_p^+$ points away from $p$ and $b_p^-$ points towards $p$; 
\item the $b_p^+\cup b_p^-$ over all $p$ are disjoint; and
\item the $b_p^+\cup b_p^-$ are disjoint from $\pi^{-1}({\bf q})$.
\ee
Then we set 
$$\mathcal{B}:= \sqcup_{p\in \widetilde Z} (b_p^+\cup b_p^-).$$ 

We can refine the definition of $\mathcal{F}_{\op{hol}}$ in \eqref{eqn: Fhol} to take values in
$$\op{Mat}(N^{\kappa},\op{BSk}_\kappa(\Sigma))\otimes \Z[c^{\pm 1}],$$
so that $\mathcal{E}(u)$ is replaced by $\mathcal{E}(u) c^{\langle \mathcal{E}(u), \mathcal{B}\rangle}$, where $\langle \mathcal{E}(u), \mathcal{B}\rangle$ is the algebraic intersection number on $\Sigma$.  Referring to $\mathcal{T}:=\mathcal{T}^{\op{ind}=1}(\mathbf{p}_\alpha,\mathbf{y}',\mathbf{y},\mathbf{p}_\beta)$ from Section~\ref{subsection: proof of thm map of algebras holomorphic version}, if $(u_2,u_1)$ a boundary point of Type (T2) and $(w,u_3)$ is a boundary point of Type (T1) at opposite ends of a component of $\mathcal{T}$,  it is easy to verify that:
$$\langle \mathcal{E}(u_1)\mathcal{E}(u_2),\mathcal{B} \rangle= \langle \mathcal{E}(u_3),\mathcal{B} \rangle,$$
since $\mathcal{E}(u_1)\mathcal{E}(u_2)$ and $\mathcal{E}(u_3)$ are homotopic rel endpoints.

\begin{remark}
    Setting $c=q^{-1/2}$ gives factors analogous to those of \cite[Figure 18]{neitzke2020q}.
\end{remark}

\section{The hybrid Floer-Morse approach}
\label{section: hybrid Floer-Morse}

We discuss the hybrid Floer-Morse counterpart of the Floer-theoretic approach in the previous section. 

In \cite{yuan2023folded}, the third author defined a Morse theory on symmetric products of manifolds, where the $A_\infty$-structure is defined by counting {\em folded Morse flow trees}. We adapt the paper to our setting so that the modified version of folded Morse flow trees gives a quantization of the nonabelianization map.

\subsection{The base direction}

We will describe objects parametrizing the base direction, corresponding to the usual HDHF base directions $Y_{m,l}$ from Section~\ref{subsection: base direction}.

\begin{definition}
    A {\em folded metric ribbon tree} is a tree $T=(V(T),E(T))=(V,E)$ equipped with a length function
\begin{equation*}
    \ell\colon E\to \mathbb{R}_{\geq0}
\end{equation*}
such that $V=V_{\op{ext}}\sqcup V_{\op{int}}$ and the following hold:
\begin{enumerate}
    \item There exists a decomposition $V_{\op{ext}}=\{v_0\}\sqcup V_{\op{diag}}\sqcup V_{\op{cusp}}$, where $v_0$ is the unique root vertex. The vertices of $V_{\op{diag}}$ (resp.\ $V_{\op{cusp}}$) are called {\em diagonal} (resp.\ {\em cusp}) vertices. 
    \item Each $e\in E$ is directed by the direction pointing toward $v_0$.
    \item Each interior vertex $v\in V_{\op{int}}$ has valency $|v|\geq 3$ and there is a cyclic order $h_v\colon E_v\to\{0,1,\dots,|v|-1\}$, where $E_v$ is the set of edges adjacent to $v$, $h_v$ is a bijection, and $h_v=0$ for the outgoing edge.\footnote{The cyclic order is supposed to be the counterclockwise order around the vertex when the ribbon tree is embedded in the plane.}
\end{enumerate}
\end{definition}

Note that we are allowing $\ell(e)=0$.

Let $\mathcal{Y}_{m,k,r,s}$ be the space of pairs (informally called ``squids'')
\begin{equation*}
    Y_{m,k,r,s}\coloneqq (Y_{m,k}, \boldsymbol{T}:=(\{T_1,\dots,T_n\},\{t_1,\dots,t_n\})),
\end{equation*}
where $Y_{m,k}\in \mathcal{Y}_{m,k}$, $T_1,\dots,T_n$, $n\geq 0$, are folded metric ribbon trees, $0\leq t_1\leq\dots\leq t_n\leq1$,
$r=\sum_{i=1}^n|V(T_i)_{\op{diag}}|$, and $s=\sum_{i=1}^n |V(T_i)_{\op{cusp}}|$.
For each $T_i$, its root vertex is identified with $t_i\in [0,1]\simeq \partial_{m+1} Y_{m}$. Here we have fixed a smooth family of (orientation-reversing) parametrizations $\phi_{Y_m}\colon[0,1]\to\partial_{m+1} Y_{m}$ for all $Y_{m}\in\mathcal{Y}_m$.
See Figure \ref{fig: F-mix} for an example.  We will informally refer to $Y_{m,k}$ as the ``squid body'' and the $T_i$ as the ``squid legs''.\footnote{Strictly speaking, squids have eight ``arms'' and two ``tentacles''.}

\begin{convention}
    Sometimes we will omit certain subscripts of $Y_{m,k,r,s}$, e.g., write $Y_m$, $Y_{m,r}$, or $Y_{m,k,r}$, with the understanding that we are referring to the result of applying a forgetful functor. 
\end{convention} 

\begin{figure}[ht]
    \centering
    \includegraphics[width=8cm]{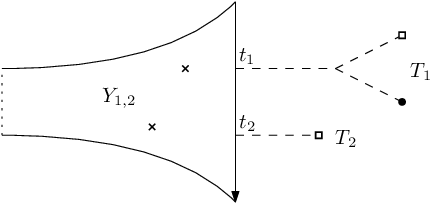}
    \caption{An example of $Y_{1,2,2,1}$. The crossings denote the marked points in $Y_{1,2}$. The boxes denote vertices in $V_{\op{diag}}$ and the black dot denotes a vertex in $V_{\op{cusp}}$.}
    \label{fig: F-mix}
\end{figure}

There is a naturally defined topology on $\mathcal{Y}_{m,k,r,s}$ admitting a compactification.

\subsection{Consistent collection of perturbation data}

We now describe a consistent collection of perturbation data associated to $\{\mathcal{Y}_{m,k,r,s}\}$.

Let $\mathfrak{M}(C)$ be the set of Riemannian metrics on $C$.
Given a Riemannian metric $g\in\mathfrak{M}(C)$, the induced {\em canonical almost complex structure} $J_g$ on $T^*C$ satisfies the following:
\begin{enumerate}
    \item $J_g$ is compatible with the canonical symplectic form $\omega$ on $T^*C$.
    \item $J_g$ maps vertical tangent vectors (i.e., those that are tangent to the fibers) to horizontal tangent vectors of $T^*C$ with respect to the Levi-Civita connection $\nabla_g$.
    \item On the zero section of $T^*C$, for $v\in T_q C\subset T_{(q,0)}(T^*C)$, let $J_g(v)(\cdot)=g(v,\cdot)\in T_q^*C\subset T_{(q,0)}(T^*C)$.
\end{enumerate}

Let $E_{\op{cusp}}\subset E$ be the subset of edges which are not on any directed path starting from $V_{\op{diag}}$ and ending on $v_0$. To each $Y_{m,k,r,s}\in\mathcal{Y}_{m,k,r,s}$, we can assign $Y_{m,k,r,0}\coloneqq \varphi(Y_{m,k,r,s})\in\mathcal{Y}_{m,k,r,0}$, obtained by collapsing all cusp edges $E_{\op{cusp}}$ of $Y_{m,k,r,s}$. We also denote the collapsing map by
\begin{equation*}
    \varphi\colon T_i\to \varphi(T_i)
\end{equation*}
for each $T_i$ of $Y_{m,k,r,s}$.

\begin{definition}
\label{def-perturb data}
    A {\em perturbation data for $\mathcal{Y}_{m,k,r,s}$} consists of:
    \begin{enumerate}
        \item a smooth family of Hamiltonian functions
        \begin{equation*}
            H_{Y_{m}}\colon Y_{m}\to \mathcal{H}(T^*C),
        \end{equation*}
        such that $H_{Y_m}$ takes values in $\mathcal{H}^\epsilon(T^*C)$ near $\partial_{m+1} Y_m$;
        \item a smooth family of almost complex structures
        \begin{equation*}
            J_{Y_{m}}\colon Y_{m}\to \mathcal{J}(T^*C,\omega);
        \end{equation*}
        \item for each $1\leq i\leq n$, a smooth family of Riemannian metrics
        \begin{equation*}
            g_{T_i}\colon T_i\to \mathfrak{M}(C);
        \end{equation*}
    \end{enumerate}
    subject to the following:
    \be
    \item[(4)] there exists $g\in\mathfrak{M}(C)$ such that $J_{Y_{m,k}}|_{\partial_{m+1}Y_m}=J_g$ and $g_{T_i}(v_{0})=g$ for each $1\leq i\leq n$;
    \item[(5)] writing $(E_i)_{\op{cusp}}$ for the edges of $T_i$ that do not admit an oriented path from a diagonal vertex, $g_{T_i}$ is constant on $(E_i)_{\op{cusp}}$;
    \item[(6)] $g_{T_i}$ is constant on a small neighborhood of each interior vertex and on each edge $e$ with $\ell(e)=0$.
    \ee
\end{definition}

\begin{remark} \label{rmk: take g to be almost constant on Ti}
    For all practical purposes, it suffices to take $g_{T_i}$ to be a small perturbation of the constant function $g$ from (4), where we take perturbations on small neighborhoods of a finite set of points of $T_i$, with one point from the interior of each edge. We want the first part of (6) to hold so that the Kirchhoff law from Definition~\ref{def-folded Morse tree}(FM4) is satisfied.
\end{remark}

\begin{definition}
\label{def-data with cusp}
    A {\em consistent collection of perturbation data} for $\{\mathcal{Y}_{m,k,r,s}\}$ consists of perturbation data for each $\mathcal{Y}_{m,k,r,s}$ which is compatible with compactification.  Moreover, we require that
    \begin{equation}
    \label{eq-collapse}
        g_{T_i}(y)=g_{\varphi(T_i)}(\varphi(y))
    \end{equation}
    for each $y\in T_i$ of $Y_{m,k,r,s}\in\mathcal{Y}_{m,k,r,s}$.
\end{definition}

\subsection{Hybrid moduli spaces}

The hybrid Floer-Morse approach involves defining a hybrid moduli space of pairs consisting of a holomorphic curve and a folded Morse tree (informally called a ``folded squid map'').

Given $\mathbf{y}\in CW(\sqcup_{i=1}^\kappa T^*_{q_i}C)$, let $\mathcal{T}^k_0(\mathbf{q},\mathbf{y},\mathbf{q})$ be the set of $((F,j),u)$, where $(F,j)$ is a compact Riemann surface with boundary, $\mathbf{w}_0,\mathbf{w}_1,\mathbf{w}_2$ are disjoint $\kappa$-tuples of boundary marked points of $F$, $\dot F=F\setminus\cup_i \mathbf{w}_i$, and
\begin{equation*}
    u\colon \dot{F}\to Y_1\times T^*C
\end{equation*}
is a continuous map such that
\begin{equation}
    (du-X_{H_{Y_1}}\otimes d\tau)^{0,1}_{j,J_{Y_{1,k}}}=0,
\end{equation}
and the following hold:
\begin{enumerate}
    \item For $m=0,1$, $\pi_{T^*C}\circ u(z)\in\sqcup_{i}T_{q_i}^*\Sigma$ if $\pi_{Y_1}\circ u(z)\subset\partial_m Y_1$. Each component of $\partial\dot F$ that projects to $\partial_m Y_1$ maps to a distinct $T^*_{q_i}C$.
    \item $\pi_{T^*C}\circ u(z)\in C$ if $\pi_{Y_1}\circ u(z)\subset\partial_2 Y_1$.
    \item $\pi_{T^*C}\circ u$ tends to $\mathbf{y}$, $\mathbf{q}$ as $s_1,s_2\to +\infty$ and tends to $\mathbf{q}$ as $s_0\to-\infty$.
    \item $\pi_{Y_1}\circ u$ is a $\kappa$-fold branched cover of $Y_1$, where we assume the branch points are simple and correspond to the marked points on $Y_{1,k}$.
\end{enumerate}

We then set
$$\mathcal{T}_0(\mathbf{q},\mathbf{y},\mathbf{q})=\sqcup_{k\geq0}\mathcal{T}^k_0(\mathbf{q},\mathbf{y},\mathbf{q}).$$

The following are analogs of Lemmas~\ref{lemma-dim} and \ref{lemma-cpt}, and are stated without proof.

\begin{lemma}
    \label{lemma-dim3}
    For a generic consistent collection of perturbation data, $\mathcal{T}_0(\mathbf{q},\mathbf{y},\mathbf{q})$ is of dimension $0$ and consists of discrete regular curves for all $\mathbf{y}$.
\end{lemma}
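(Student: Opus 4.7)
The plan is to mirror the proof of Lemma~\ref{lemma-dim} essentially line for line, exploiting the fact that the only structural change in $\mathcal{T}_0(\mathbf{q},\mathbf{y},\mathbf{q})$ is that the Lagrangian boundary condition along $\partial_2 Y_1$ is the zero section $C\subset T^*C$ rather than $\Sigma\subset T^*C$, with the corresponding asymptotic conditions at the ends $e_0$ and $e_2$ being the constant tuple $\mathbf{q}$ (the intersection of the cotangent fibers $\sqcup_i T^*_{q_i}C$ with the zero section) in place of $\mathbf{p}_\alpha$ and $\mathbf{p}_\beta$. Since the zero section and $\Sigma$ are both smooth embedded Lagrangians in $T^*C$ that intersect each cotangent fiber transversely, all analytic ingredients in the argument carry over verbatim.

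First I would verify the virtual dimension. The Riemann-Hurwitz contribution from the $\kappa$-fold branched cover $\pi_{Y_1}\circ u$ of $Y_1$ is unchanged, and the difference between the total Maslov contribution along the new $C$-boundary and the old $\Sigma$-boundary is balanced by the difference in the sum of Conley--Zehnder indices at the asymptotic ends once one uses the same perturbation framework from Section~\ref{subsection: consistent collection 1}. The index computation from \cite[Theorem 2.3]{honda2024jems} thus yields virtual dimension $0$ exactly as in Lemma~\ref{lemma-dim}.

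Next I would establish transversality by a standard Sard--Smale argument. The perturbation data afforded by Definition~\ref{defn: perturbation data for Y1l} --- the domain-dependent almost complex structures $J_{Y_{1,k}}$ and Hamiltonians $H_{Y_1}$ parametrized over the contractible space $\mathcal{J}(T^*C,\omega)$ --- is exactly the flexibility invoked in Lemma~\ref{lemma-dim}, and genericity produces a surjective linearized Cauchy--Riemann operator at each solution. Somewhere injectivity is ensured by the fact that $\pi_{Y_1}\circ u$ is a simple branched cover with simple branch points, and the asymptotic tuples together with the distinct cotangent fiber boundary components on $\partial_0 Y_1, \partial_1 Y_1$ prevent multiply covered components, just as before. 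Once transversality holds, $\mathcal{T}_0(\mathbf{q},\mathbf{y},\mathbf{q})$ is a $0$-dimensional manifold of regular curves.

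The only mildly new issue will be confirming that the auxiliary choices of metrics $g_{T_i}$ and of the boundary-compatible $J_{Y_{1,k}}|_{\partial_2 Y_1}=J_g$ coming from Definition~\ref{def-perturb data}(4) do not obstruct genericity. The main obstacle --- if one exists --- is that the compatibility constraint $J_{Y_{1,k}}|_{\partial_{2}Y_1}=J_g$ restricts the allowed almost complex structures along the $C$-boundary to a codimension-zero but non-open-dense subfamily. However, this only constrains $J_{Y_{1,k}}$ on the boundary, and one retains full freedom in the interior and on a neighborhood of the asymptotic ends, which is enough to achieve surjectivity of the linearization by the standard argument (see e.g.\ \cite{honda2024jems}). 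The subsequent compactness statement (the analog of Lemma~\ref{lemma-cpt}) then follows from Gromov compactness as usual, so that for generic perturbation data $\mathcal{T}_0(\mathbf{q},\mathbf{y},\mathbf{q})$ is a discrete set of regular curves.
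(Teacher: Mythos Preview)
Your proposal is correct and aligns with the paper's own treatment: the paper explicitly states this lemma without proof as an analog of Lemma~\ref{lemma-dim}, so mirroring that argument line by line is exactly the intended approach. Your additional remarks on the boundary constraint $J_{Y_{1,k}}|_{\partial_2 Y_1}=J_g$ from Definition~\ref{def-perturb data}(4) and on somewhere injectivity are reasonable elaborations, though the paper evidently considers them routine enough to omit.
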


\begin{lemma} \label{lemma-cpt3}
    For a generic consistent collection of perturbation data and fixed $\mathbf{y}$ and $k\geq0$, the moduli space $\mathcal{T}^k_0(\mathbf{q},\mathbf{y},\mathbf{q})$ consists of finitely many curves.
\end{lemma}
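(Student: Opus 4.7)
The plan is to run the same Gromov compactness scheme used for Lemma~\ref{lemma-cpt}, adapted to the new boundary condition that $\pi_{T^*C}\circ u$ maps $(\pi_{Y_1}\circ u)^{-1}(\partial_2 Y_1)$ into the zero section $C\subset T^*C$ rather than into $\Sigma$. Since $k$ is fixed, the Riemann-Hurwitz formula pins down $\chi(F)=\kappa-k$, so the topological type of the domain is constrained to finitely many possibilities; hence it suffices to show that the subset of $\mathcal{T}_0^k(\mathbf{q},\mathbf{y},\mathbf{q})$ corresponding to each fixed topological type is compact (up to breaking) and then rule out the unwanted degenerations.

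First I would establish an a priori energy bound. All Lagrangian boundary conditions in the definition of $\mathcal{T}_0^k(\mathbf{q},\mathbf{y},\mathbf{q})$ --- the cotangent fibers $T^*_{q_i}C$ and the zero section $C$ --- are exact in $(T^*C,\lambda)$, and the asymptotics $\mathbf{q}$ and $\mathbf{y}$ are fixed; combining this with the fact that the domain-dependent Hamiltonian $H_{Y_1}$ has compact-support perturbation away from the standard quadratic model $\tfrac12|p|^2$, the Floer-theoretic action yields a uniform energy bound depending only on $\mathbf{y}$, $\mathbf{q}$, and the fixed data in the consistent collection of perturbation data. A maximum principle argument of the kind used in wrapped Floer theory (see \cite{abouzaid2012wrapped} and the references in \cite{honda2024jems}) then confines the image of any $u\in\mathcal{T}_0^k(\mathbf{q},\mathbf{y},\mathbf{q})$ to a fixed compact subset of $T^*C$.

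Next I would invoke Gromov compactness on this bounded region. Because $(T^*C,\omega)$ is exact there are no closed $J$-holomorphic spheres, and because both families of boundary Lagrangians are exact there are no disk bubbles; for a generic choice within the consistent collection of perturbation data, the usual arguments rule out sphere and disk bubbling entirely, so the only possible degenerations are strip breaking at the strip-like ends $e_0$, $e_1$, $e_2$ and degenerations into elements of $\mathcal{Y}_{1,k'}$ for $k'\leq k$. Since $\mathbf{y}$ and the asymptotics $\mathbf{q}$ are fixed and only finitely many Hamiltonian chords can appear as intermediate asymptotics under the energy bound, only finitely many strata of broken configurations are possible.

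By Lemma~\ref{lemma-dim3}, $\mathcal{T}_0^k(\mathbf{q},\mathbf{y},\mathbf{q})$ is a $0$-dimensional manifold of regular curves; combining this with the compactness just established forces each stratum to be a finite set, proving the lemma. The main obstacle I anticipate is confirming the a priori $C^0$ estimate: because $\partial_2 Y_1$ is sent to the zero section while the other boundary arcs are sent to cotangent fibers, one must check that the standard maximum principle for the symplectic action of a contact-type Hamiltonian still applies uniformly across all the perturbation data parametrized by $Y_{1,k}$, but this follows from the conditions imposed on $H_{Y_1}$ and $J_{Y_{1,k}}$ in Section~\ref{subsection: consistent collection 1}.
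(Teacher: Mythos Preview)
Your proposal is correct and follows the same approach the paper intends: the lemma is stated without proof as a direct analog of Lemma~\ref{lemma-cpt}, whose proof is the one-line ``standard Gromov compactness since there is an energy bound.'' You have simply unpacked that one line---energy bound from exactness, $C^0$ bound via the maximum principle, absence of bubbling from exactness, and finiteness from dimension zero---which is exactly the standard argument being invoked.
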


\begin{definition}
    A {\em $\kappa$-folded lift} of a folded metric ribbon tree $T=(V,E)$ is a pair $(G,\pi_0)$ consisting of a metric ribbon graph $G=(\widetilde{V},\widetilde{E})$ and a map $\pi_0:G\to T$, constructed as follows: Let $\sqcup_\kappa T:=\sqcup_{i=1}^\kappa T^i$ be the disjoint union, where each $T^i=(V^i,E^i)$ is a copy of $T$. For each $v\in V$, denote the corresponding copies of vertices by $v^1,\dots,v^\kappa$.
    \be
    \item For each $v\in V_{\op{diag}}$, choose two vertices $v^i$ and $v^j$ and their adjacent edges $e^i$ and $e^j$; the other vertices are called {\em ordinary exterior vertices}. We then glue $e^i$ and $e^j$ as a ribbon graph by identifying $v^i$ and $v^j$. The resulting glued-up edge $e_{glue}$ has length $\ell(e_{glue})=\ell(e^i)+\ell(e^j)$. 
    \item For each $v\in V_{\op{cusp}}$, choose one vertex $v^i$ and call it a {\em cusp vertex}; the other vertices will also be called {\em ordinary exterior vertices}. 
    \ee
    The map $\pi_0$ is obtained from identifying each $T^i$ with $T$ and quotienting certain vertices of $\sqcup_\kappa T$ as described above; it restricts to a $\kappa$-fold covering map over $G\backslash \pi_0^{-1}(V_{\op{diag}})$.
\end{definition} 

\begin{definition} 
\label{def-folded Morse tree}
    A {\em folded Morse tree} is a pair $((G=(\widetilde{V},\widetilde{E}),\pi_0),\gamma)$, where $(G,\pi_0)$ is a $\kappa$-folded lift of a folded metric ribbon tree $T$ and $\gamma:G\to C$ is a continuous map satisfying the following:
    \begin{enumerate}
    \item[(FM1)]  For each contractible open subset $U\subset C\backslash Z$, there are $N$ smooth functions $f_1,\dots, f_N$ defined on $U$ such that the preimages $\pi^{-1}(U)$ are given by the graphs of $df_1,\dots,df_N$. (Recall the discussion from Section~\ref{section: holomorphic symplectic to real symplectic}.) For each $e\in\widetilde{E}$, when restricted to $(\gamma|_e)^{-1}(U)$,
    \begin{equation*}
        \gamma'(s)=\nabla_{g\circ \pi_0}(f_{i_{B}(e)}-f_{i_{T}(e)})(\gamma(s)),
    \end{equation*}
    for some $i_{B}(e), i_{T}(e)\in\{1,\dots,N\}$. (Here the subscripts $T$ and $B$ stand for ``top'' and ``bottom'', when the tree is drawn with the outgoing edge pointing to the left as in Figure~\ref{fig: F-mix} and the edges are oriented towards the root vertex.)

    \item[(FM2)] If $e\in \widetilde{E}$ is adjacent to an ordinary exterior vertex, then $i_{B}(e)=i_{T}(e)$ and $\gamma|_e$ is a constant map; otherwise $i_{B}(e)\not=i_{T}(e)$.

    \item[(FM3)] If $e\in \widetilde{E}$ is adjacent to a cusp vertex $v\in \widetilde{V}$, then the negative gradient flow of $f_{i_{B}(e)}-f_{i_{T}(e)}$ limits to a point in $Z$ when approaching $v$. 
    
    \item[(FM4)] The gradient flow preserves the ribbon structure, i.e., a Kirchhoff law: for each interior vertex $v\in \widetilde{V}$, denote the cyclically ordered edges adjacent to $v$ by $e_0,\dots,e_{|v|-1}$ (again in counterclockwise order), where $e_0$ is outgoing. Then
    \begin{align*}
        i_{B}(e_{i+1})&=i_{T}(e_i),\quad\text{for } 1\leq i\leq|v|-2,\\
        i_{T}(e_0)&=i_{T}(e_{|v|-1}),\quad i_{B}(e_0)=i_{B}(e_1).
    \end{align*}
\end{enumerate}
We will often omit $\pi_0$ from the notation for a folded Morse tree.
\end{definition}

Let $\mathbf{y}\in CW(\sqcup_{i=1}^\kappa T^*_{q_i}C)$ and $\alpha,\beta\in \mathfrak{I}$. 

\begin{definition} \label{defn: folded squid maps}
    A {\em folded squid map with base $Y_{1,k,r,s}$} is a pair $((F,u),(G,\gamma))$, where: 
\be
    \item[(FS1)] $Y_{1,k,r,s}\coloneqq (Y_{1,k}, \boldsymbol{T}:=(\{T_1,\dots,T_n\},\{t_1,\dots,t_n\}))\in \mathcal{Y}_{1,k,r,s}$;
    \item[(FS2)] $(F,u)\in\mathcal{T}^{k}_0(\mathbf{q},\mathbf{y},\mathbf{q})$ and $\dot F$ branch covers $Y_{1,k}$;
    \item[(FS3)] $G=\sqcup_{i=1}^n G_i$, $G_i$ is a $\kappa$-folded lift of $T_i$, and $(G_i,\gamma|_{G_i})$ is a folded Morse tree; 
    \item[(FS4)] for the root vertex $v_i\in T_i$, there is an identification 
    $$\eta_i: \pi_0^{-1}(v_i)\stackrel{\sim}\to(\pi_{Y_1}\circ u)^{-1}(\phi_{Y_1}(t_i))$$ 
    such that $\gamma(v)=\pi_{T^*C}\circ u(\eta_i(v))$ (recall that $\phi_{Y_1}$ is a map $[0,1]\to \bdry_2Y_1$);  
    \item[(FS5)] for consecutive $T_i$, $T_{i+1}$, there is an identification
    $$ \pi_0^{-1}(v_i)\stackrel{\eta_i}\to(\pi_{Y_1}\circ u)^{-1}(\phi_{Y_1}(t_i))\stackrel\sim\to (\pi_{Y_1}\circ u)^{-1}(\phi_{Y_1}(t_{i+1}))\stackrel{\eta_{i+1}^{-1}}\to \pi_0^{-1}(v_{i+1}),$$
    where the middle map is given by parallel transport along 
    $$(\pi_{Y_1}\circ u)^{-1}(\phi_{Y_1}([t_i,t_{i+1}])).$$
    If $v$ is taken to $v'$ under the identification and $e$ and $e'$ are edges adjacent to $v$ and $v'$, then the sheet corresponding to $i_{B}(e)$ agrees with the sheet corresponding to $i_{T}(e')$ under the parallel transport. 
\ee
\end{definition}

Let $\mathcal{P}_{k,r,s}(\mathbf{q},\mathbf{y},\mathbf{q})$ be the moduli space of folded squid maps $((F,u),(G,\gamma))$ with base $Y_{1,k,r,s}\in \mathcal{Y}_{1,k,r,s}$ and let
\begin{equation*}
    \mathcal{P}^{l}(\mathbf{q},\mathbf{y},\mathbf{q})\coloneqq\bigsqcup_{0\leq k\leq l, s\geq0}\mathcal{P}_{k, l-k, s}(\mathbf{q},\mathbf{y},\mathbf{q}).
\end{equation*}

One advantage of our setting compared to \cite{neitzke2020q} is that the related moduli space is of index $0$ for any number of loops in the graph:

\begin{lemma} \label{lemma: finite in number}
    For a generic consistent collection of perturbation data, $\mathcal{P}^{l}(\mathbf{q},\mathbf{y},\mathbf{q})$ consists of transversely cut out rigid elements such that:
    \be
    \item[(P1)] $Y_{1,k,l-k,s}$ satisfies $0 <t_1<\dots < t_n <1$;
    \item[(P2)] for each $T_i$, the length of the edge adjacent to a root vertex is positive and all the internal vertices are trivalent; 
    \item[(P3)] the marked points of $Y_{1,k}$ are disjoint from $\bdry_2 Y_1$; and
    \item[(P4)] the image of $\gamma$ is disjoint from the union $U_{R_\nu}$ of all $R_\nu$-disks in $C$ centered at points of $Z$ and $P$, with the exception of edges adjacent to cusp vertices that limit to cusp edges;
    \ee
    and which are finite in number for each $l\geq0$.
\end{lemma}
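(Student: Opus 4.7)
The plan is to prove Lemma~\ref{lemma: finite in number} by the standard package of (a) virtual dimension computation, (b) transversality via genericity of perturbation data, (c) generic avoidance of the codimension-$\ge 1$ degenerations ruled out by (P1)--(P4), and (d) compactness. The starting observation is that a folded squid map splits naturally into a holomorphic piece $(F,u)\in\mathcal{T}_0^{k}(\mathbf{q},\mathbf{y},\mathbf{q})$ over a squid body $Y_{1,k}$ and a Morse piece $(G,\gamma)$ consisting of folded Morse trees over squid legs $T_1,\dots,T_n$, coupled only through the finite-dimensional identifications (FS4)--(FS5) at the roots.

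First I would compute the virtual dimension and show it is $0$ for all $l\ge 0$. The contribution of $(F,u)$ is zero by Lemma~\ref{lemma-dim3}. For each squid leg $T_i$ with $p$ trivalent interior vertices, $r_i$ diagonal vertices and $s_i$ cusp vertices, the space of $(Y_{1,k,r,s}, g_{T_i})$-data together with the folded lift contributes $2p_i+1$ edge lengths plus the root position $t_i\in[0,1]$, and the folded Morse equation $\gamma'(s)=\nabla_{g\circ\pi_0}(f_{i_B(e)}-f_{i_T(e)})(\gamma(s))$ imposes matching constraints whose net count matches that of Ekholm~\cite{ekholm2007morse}, as adapted to the symmetric product setting in \cite{yuan2023folded}. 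Each diagonal vertex forces two sheets of $\pi$ to meet at a common point in $C$, a codimension-$2$ constraint that exactly cancels the two extra parameters carried by the corresponding fold; each cusp vertex is pinned to $Z$ by (FM3), removing a further pair of parameters. Summing over $i$ and adding the contribution of the $k=l-\sum r_i$ branch points of $Y_{1,k}$ gives virtual dimension $0$.

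Second, I would achieve transversality by the Sard--Smale scheme applied to the universal moduli space over the space of consistent perturbation data from Definition~\ref{def-perturb data}. The domain-dependent $J_{Y_{1,k}}$ can be perturbed freely away from the strip-like ends and necks, handling the holomorphic piece as in Lemma~\ref{lemma-dim3}. For the Morse piece, Remark~\ref{rmk: take g to be almost constant on Ti} allows independent perturbation of $g_{T_i}$ on each edge, and surjectivity of the linearized gradient-graph operator subject to the Kirchhoff law (FM4) follows from the argument of \cite{yuan2023folded}. The coupling via (FS4)--(FS5) is a finite-dimensional evaluation constraint along $\bdry_2 Y_1$, transverse for generic metric $g$ in Definition~\ref{def-perturb data}(4). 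Properties (P1)--(P4) are all generic: collisions among the $t_i$ and with $\{0,1\}$, and vanishing of root-edge lengths, are codimension $\ge 1$ strata in $\mathcal{Y}_{1,k,l-k,s}$; branch points hitting $\bdry_2 Y_1$ (violating (P3)) are also codimension $1$; higher-valency interior vertices are handled by the standard genericity-of-metrics argument of \cite{ekholm2007morse}; and (P4) holds for generic $g_{T_i}$ and small enough $R_\nu$ because, away from cusp edges, the folded Morse trajectories are integral curves of a smooth vector field on $C\setminus (Z\cup P)$ and can be made to avoid the prescribed disks $U_{R_\nu}$ by a $C^0$-small perturbation of $g$ on the compact complement of those disks.

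Finally, for finiteness I would combine Gromov compactness on the holomorphic side (Lemma~\ref{lemma-cpt3}, for each fixed $k$) with an a priori length bound on the Morse trees. Exactness of $\Sigma$ (Condition~\ref{condition: real exact}) and the matching (FS4) bound the total action $\sum\int_{e}|\nabla(f_{i_B}-f_{i_T})|^2\,ds$ of $\gamma$ in terms of $\mathbf{y}$ and the fixed endpoints $\mathbf{q}$, which in turn bounds the sum of edge lengths and hence the combinatorial complexity of $(Y_{1,k,l-k,s},\boldsymbol T)$ once $l$ is fixed. The moduli space $\mathcal{P}^{l}(\mathbf{q},\mathbf{y},\mathbf{q})$ is thus a finite disjoint union of compact $0$-manifolds, proving the lemma. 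The step I expect to be the main obstacle is the index-matching in paragraph two: one needs to check carefully that the ``fold'' contribution of a diagonal vertex cancels the correct number of moduli parameters, and that cusp vertices (absent from the Fukaya--Oh/Ekholm setup in its original form) behave as asserted. This is the point at which the analogy with Ekholm's adiabatic limit theorem must be made quantitatively precise, and where we rely most directly on the analysis of \cite{yuan2023folded}.
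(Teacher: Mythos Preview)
Your overall strategy---index computation, transversality via generic perturbation data, generic avoidance of (P1)--(P4), and compactness---matches the paper's.  However, two places in your argument, both concerning the cusp part of the tree, do not go through as written.

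First, for transversality of the Morse piece you assert that ``Remark~\ref{rmk: take g to be almost constant on Ti} allows independent perturbation of $g_{T_i}$ on each edge.''  This is false on the cusp edges: Definition~\ref{def-perturb data}(5) forces $g_{T_i}$ to be \emph{constant} on $(E_i)_{\mathrm{cusp}}$, so you have no domain-dependent freedom there.  The paper separates the two cases: on $E_i\setminus (E_i)_{\mathrm{cusp}}$ the domain-dependent perturbation of \cite{yuan2023folded} handles transversality of the diagonal intersections, while on $(E_i)_{\mathrm{cusp}}$ one must instead run the inductive argument for flow trees emanating from cusp edges given in \cite[Proof of Theorem~1(i)]{casals2025spectralnetworksbettilagrangians}.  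Your reference to Ekholm's genericity-of-metrics argument does not cover this, since the relevant metric is not being varied along those edges.

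Second, and more seriously, your finiteness argument does not bound the number $s$ of cusp vertices.  You claim that the action bound $\sum_e\int|\nabla(f_{i_B}-f_{i_T})|^2\,ds$ controls ``the combinatorial complexity of $(Y_{1,k,l-k,s},\boldsymbol{T})$ once $l$ is fixed,'' but $l=k+r$ does not see $s$, and near a branch point $|\nabla(f_{i_B}-f_{i_T})|\to 0$, so cusp subtrees can carry arbitrarily small action.  Thus neither the energy nor the fixed $l$ prevents trees with arbitrarily many cusp leaves.  The paper addresses this by invoking \cite[Proposition~3.18]{casals2025spectralnetworksbettilagrangians}, which shows $\mathcal{P}_{k,l-k,s}(\mathbf{q},\mathbf{y},\mathbf{q})=\varnothing$ for all but finitely many $s$; this is a nontrivial finiteness statement about flow trees in spectral-network geometry and is not a consequence of an action bound.
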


\begin{proof}
    Note that in Definition \ref{def-perturb data}, we allow $g_{T_i}$ to be domain-dependent on $E_i\backslash (E_i)_{\mathrm{cusp}}$ but constant on $(E_i)_{\mathrm{cusp}}$.
    By \cite[Lemma 2.6]{yuan2023folded}, the domain-dependent perturbation of $g_{T_i}$ on $E_i\backslash (E_i)_{\mathrm{cusp}}$ suffices to make the gradient trajectory along each edge adjacent to $V_{\mathrm{diag}}$ transversely intersect the big diagonal of $C^\kappa$.
    The perturbation of $g_{T_i}$ on the remaining part $(E_i)_{\mathrm{cusp}}$ is constant, but this is still sufficient for transversality using the inductive argument for flow trees starting from cusp vertices that is given in \cite[Proof of Theorem 1(i)]{casals2025spectralnetworksbettilagrangians}.
    Hence $\mathcal{P}^{l}(\mathbf{q},\mathbf{y},\mathbf{q})$ consists of transversely cut out rigid elements and the above discussion together with the transversality of the evaluation map 
    $$\mathcal{E}(u)=(\pi_{T^*C}\circ u) (\pi_{Y_1}\circ u)^{-1}(\bdry_2 Y_1)$$ 
    for $(u,\gamma)\in \mathcal{P}^{l}(\mathbf{q},\mathbf{y},\mathbf{q})$ implies (P1)--(P4). 
    
    By the combination of Gromov compactness and \cite[Lemma 2.7]{yuan2023folded}, $\mathcal{P}_{k, l-k, s}(\mathbf{q},\mathbf{y},\mathbf{q})$ is a finite set for each $l,k,s$. 
    Moreover, $\mathcal{P}_{k, l-k, s}(\mathbf{q},\mathbf{y},\mathbf{q})$ is empty for all but finitely many $s$ for each $l,k$ due to \cite[Proposition 3.18]{casals2025spectralnetworksbettilagrangians}. 
    Therefore, $\mathcal{P}^l(\mathbf{q},\mathbf{y},\mathbf{q})$ is a finite set for each $l\geq0$.
\end{proof}

\begin{figure}[ht]
    \centering
    \includegraphics[width=10cm]{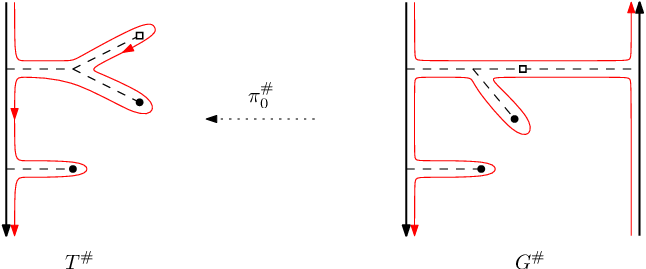}
    \caption{}
    \label{fig: folded tree}
\end{figure}

Now recall the parametrization $\phi_{Y_1}:[0,1]\to \bdry_2 Y_1$. Using this parametrization, the restriction of $\pi_{T^*C}\circ u$ to $(\pi_{Y_1}\circ u)^{-1}(\bdry_2 Y_1)$ gives a $\kappa$-strand braid $\mathcal{E}^\flat(u)$ on $C$ from ${\bf q}$ to ${\bf q}$; without loss of generality we may assume that $\mathcal{E}^\flat(u)$ is disjoint from the branch points of $C$.  

We define
$$Y_1^\sharp:= Y_1\sqcup N(T_1)\sqcup\dots \sqcup N(T_n)/\sim,$$ 
where $N(T_i)$, $i=1,\dots, n$, is an $\epsilon$-thick ribbon of $T_i$ for $\epsilon>0$ small, the root vertex $v_i$ of $T_i$ has an interval neighborhood $N(v_i)\simeq [-\epsilon,\epsilon]\times \{v_i\}\subset \bdry N(T_i)$, and $N(v_i)$ is glued to $\bdry_2 Y_1$ along $[t_i-\epsilon, t_i+\epsilon]$. Similarly, we define 
$$\dot F^\sharp:= \dot F\sqcup N(G_1)\sqcup \dots \sqcup N(G_n)/\sim,$$ 
where $N(G_i)$ is an $\epsilon$-thick ribbon of $G_i$ and $[-\epsilon,\epsilon]$ times the preimages of the root vertex of $T_i$ in $\bdry N(G_i)$ are glued to $\dot F$ using the identification $\eta_i$.

Let $T^\sharp$ be the boundary component of $Y_1^\sharp$ which nontrivially intersects $\bdry_2 Y_1$ and let $G^\sharp$ be the union of boundary components of $\dot F^\sharp$ that nontrivially intersect $(\pi_{Y_1}\circ u)^{-1}(\bdry_2 Y_1)$; $T^\sharp$ (resp.\ $G^\sharp$) is given by a red arc (resp.\ red arcs) in Figure \ref{fig: folded tree}. There is also a covering map $\pi_0^\sharp: G^\sharp\to T^\sharp$.

We parametrize $\phi_{Y_1}^\sharp: [0,1]\stackrel\sim\to T^\sharp$ (and hence each component of $G^\sharp$) so that $\phi_{Y_1}= \phi_{Y_1}^\sharp$ on the portion where $\bdry_2 Y_1$ and $T^\sharp$ agree. 
In view of (FS4) and (FS5), $\pi_{T^*C}\circ u|_{\bdry Y_1}$ and $\gamma$ can be consistently lifted to $\Sigma$, and gluing them after a small perturbation yields a $\kappa$-strand braid $G^\sharp\to \Sigma$, which contributes an entry $\mathcal{E}^\sharp(u,\gamma)$ of $\op{Mat}(N^{\kappa},\op{BSk}_\kappa(\Sigma))$.
We define
\begin{gather}
\label{eq-f-morse}
    \mathcal{F}_{\op{Mor}}\colon CW(\sqcup_{i=1}^\kappa T^*_{q_i}C)\to \op{Mat}(N^{\kappa},\op{BSk}_\kappa(\Sigma)),\\
    \nonumber\mathbf{y}\mapsto\sum_{l\geq0}\sum_{(u,\gamma)\in\mathcal{P}^l(\mathbf{q},\mathbf{y},\mathbf{q})} (-1)^{\sharp(u,\gamma)}\cdot\hbar^{\kappa-l}\cdot\mathcal{E}^\sharp(u,\gamma).
\end{gather}

To show that $\mathcal{F}_{\op{Mor}}$ is an algebra map, we need to analyze moduli spaces of dimension $1$.  However, a $1$-parameter family of spectral networks for $N>2$ may undergo a nonlocal topological change, and hence it is difficult to verify the invariance of $\mathcal{F}_{\op{Mor}}$ in this manner. Instead, we sketch the proof of $\mathcal{F}_{\op{Mor}}=\mathcal{F}_{\op{hol}}$ in the next section by comparing related moduli spaces, and the algebra map property is a byproduct of this equivalence.

\subsection{Modifications for $\Sigma^\epsilon_{\nu}$}

We describe the modifications when we replace $\Sigma$ by $\Sigma_{\nu}^{\epsilon}$. There exists a map
\begin{equation*}
    \mathcal{F}^{\nu,\epsilon}_{\op{Mor}}\colon CW(\sqcup_{i=1}^\kappa T^*_{q_i}C)\to \op{Mat}(N^{\kappa},\op{BSk}_\kappa(\Sigma^\epsilon_\nu)),
\end{equation*}
defined in the same way as \eqref{eq-f-morse}, except that the summation is over a slightly modified moduli space $\mathcal{P}_{\nu,\epsilon}^l(\mathbf{q},\mathbf{y},\mathbf{q})$ of folded squid maps with respect to $\Sigma^\epsilon_\nu$, where (FM3) of Definition \ref{def-folded Morse tree} is replaced by
\begin{itemize}
    \item[(FM3')] If $e\in \widetilde{E}$ is adjacent to a cusp vertex $v\in \widetilde{V}$, then the negative gradient flow of $f_{i_{B}(e)}-f_{i_{T}(e)}$ limits normally to a cusp edge in an $R_\nu$-disk centered at some point in $Z$ (see the red cusp edge in Figure~\ref{fig: cusp}). 
\end{itemize}

\begin{lemma}
\label{lemma-morse-compare}
    For each $l\geq0$, there exists $\nu_l>0$ such that
    $$(\zeta_\nu^{-1}\circ \pi_{\nu,\epsilon})_*\circ\mathcal{F}^{\nu,\epsilon}_{\op{Mor}}=\mathcal{F}_{\op{Mor}}\,\, \op{ mod } \hbar^{l+1},$$
    whenever $\nu> \nu_l$.
\end{lemma}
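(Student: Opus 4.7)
The plan is to establish, for each $l'$ that can contribute modulo $\hbar^{l+1}$ and for $\nu$ sufficiently large, a bijection
$$\Phi_\nu\colon \mathcal{P}^{l'}(\mathbf{q},\mathbf{y},\mathbf{q}) \stackrel{\sim}{\longrightarrow} \mathcal{P}^{l'}_{\nu,\epsilon}(\mathbf{q},\mathbf{y},\mathbf{q})$$
that preserves orientations and satisfies $(\zeta_\nu^{-1}\circ \pi_{\nu,\epsilon})_*(\mathcal{E}^\sharp(\Phi_\nu(u,\gamma))) = \mathcal{E}^\sharp(u,\gamma)$. Since the power of $\hbar$ in the definition of $\mathcal{F}_{\op{Mor}}$ is tied to the combinatorics of $\mathcal{P}^{l'}$, only finitely many $l'$ contribute modulo $\hbar^{l+1}$, and Lemma~\ref{lemma: finite in number} gives finiteness of each such $\mathcal{P}^{l'}$; the lemma will then follow by choosing $\nu_l$ large enough to handle all of these finitely many bijections simultaneously.

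First I would carry out the forward gluing $\Phi_\nu$. Given $(u,\gamma)\in \mathcal{P}^{l'}$, each cusp vertex $v$ of the folded Morse tree $G$ maps under $\gamma$ to a branch point in $Z$. For $\nu$ large enough that $R_\nu$ is small, I would deform $(u,\gamma)$ to $(u_\nu,\gamma_\nu)$ as follows: leave the data essentially intact away from neighborhoods of cusp vertices (composing with $\zeta_\nu^{-1}\circ \pi_{\nu,\epsilon}$ where needed); and near each cusp vertex, replace $v$ by a short cusp edge ending normally on the cusp edge of $\Sigma^\epsilon_\nu$ inside the $R_\nu$-disk around $\gamma(v)$, in accordance with (FM3'). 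The local model of the cusp region from Section~\ref{section: perturbing Sigma} makes this construction explicit, and an implicit-function argument built on the transversality of $\mathcal{P}^{l'}$ in Lemma~\ref{lemma: finite in number} should produce a unique such $(u_\nu,\gamma_\nu)$ in an appropriate Banach neighborhood of $(u,\gamma)$.

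For surjectivity I would argue by compactness. Suppose, for a sequence $\nu_k\to \infty$, there are elements $(u_k,\gamma_k)\in \mathcal{P}^{l'}_{\nu_k,\epsilon}$ not in the image of $\Phi_{\nu_k}$. Uniform energy bounds on the $u_k$ together with the gradient-flow-tree compactness of \cite[Lemma 2.7]{yuan2023folded} and \cite[Proposition 3.18]{casals2025spectralnetworksbettilagrangians} should extract a subsequential limit $(u_\infty,\gamma_\infty)\in \mathcal{P}^{l'}(\mathbf{q},\mathbf{y},\mathbf{q})$ in which each cusp edge of $\gamma_k$ collapses to a cusp vertex mapping into $Z$ as $R_{\nu_k}\to 0$. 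Transversality then forces $(u_k,\gamma_k)$ to lie in the Banach neighborhood produced by $\Phi_{\nu_k}$ for $k$ sufficiently large, a contradiction. Orientations agree because the spin structure on $\Sigma^\epsilon_\nu$ is pushed forward from that on $\Sigma$, and the evaluation maps correspond under $(\zeta_\nu^{-1}\circ \pi_{\nu,\epsilon})_*$ because outside the cusp region the braids differ only by a $C^\infty$-small Hamiltonian isotopy, while inside the cusp region the identification is canonical via the parallel transport used to define $\mathcal{E}^\sharp$.

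The hard part will be the cusp-region gluing step, specifically showing that the linearized gluing operator has a right inverse whose norm is bounded uniformly as $R_\nu\to 0$. This will require a careful choice of weighted Sobolev norms adapted to the degenerating local model, extending Ekholm's adiabatic limit analysis \cite{ekholm2007morse} for gradient flow lines limiting to cusp edges, together with its folded-tree refinement in \cite{yuan2023folded}. Once this uniform regularity is in hand, both the forward gluing and the compactness argument for surjectivity should proceed essentially as in those references, so that the proof is essentially a uniformity enhancement of existing techniques.
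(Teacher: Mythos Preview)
Your plan is correct and matches the paper's approach: both arguments reduce the lemma to establishing, for each $l'\leq l$, a bijection between $\mathcal{P}^{l'}(\mathbf{q},\mathbf{y},\mathbf{q})$ and $\mathcal{P}^{l'}_{\nu,\epsilon}(\mathbf{q},\mathbf{y},\mathbf{q})$ that preserves orientations and evaluation-map contributions, after invoking the finiteness from Lemma~\ref{lemma: finite in number}. The only difference is that the paper does not construct this bijection itself but simply cites \cite[Lemma 3.23]{casals2025spectralnetworksbettilagrangians}, whereas you outline the gluing-and-compactness argument directly; your sketch is consistent with the content of that reference.
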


\begin{proof}
    For each $l\geq0$, $\mathcal{P}^l(\mathbf{q},\mathbf{y},\mathbf{q})$ is a finite set by Lemma~\ref{lemma: finite in number}. 
    By \cite[Lemma 3.23]{casals2025spectralnetworksbettilagrangians}, for $\nu_l>0$ large, there is a bijection between $\mathcal{P}^{l'}(\mathbf{q},\mathbf{y},\mathbf{q})$ and $\mathcal{P}_{\nu,\epsilon}^{l'}(\mathbf{q},\mathbf{y},\mathbf{q})$ for all $\nu>\nu_l$ and $l'\leq l$. 
    Moreover, the contribution of each element in $\mathcal{P}_{\nu,\epsilon}^{l'}(\mathbf{q},\mathbf{y},\mathbf{q})$ to $(\zeta_\nu^{-1}\circ \pi_{\nu,\epsilon})_*\circ\mathcal{F}^{\nu,\epsilon}_{\op{Mor}}$ is the same as the contribution of its corresponding element in $\mathcal{P}^{l'}(\mathbf{q},\mathbf{y},\mathbf{q})$ to $\mathcal{F}_{\op{Mor}}$. Hence $(\zeta_\nu^{-1}\circ \pi_{\nu,\epsilon})_*\circ\mathcal{F}^{\nu,\epsilon}_{\op{Mor}}=\mathcal{F}_{\op{Mor}}$ mod $\hbar^{l+1}$ for $\nu>\nu_l$.
\end{proof}

\section{The equivalence of the two approaches}
\label{section: equivalence of two approaches}

In this section we sketch the proof of the required lemmas which imply Conjecture \ref{conj-morse}. 

\subsection{From squids to Riemann surfaces}

In this subsection we define a gluing map $\Theta_\lambda$, for $\lambda>0$ small, from the space of squids to the space of holomorphic disks with interior marked points and present its basic properties. This is a generalization of Theorem 10.4 and Section 14 of \cite{fukaya1997zero} to the case of disks with interior marked points. 

\subsubsection{Thickening}  \label{subsubsection: thickening}

Let  $T=(V,E)$  be a folded metric ribbon tree.  For each edge $e\in E$ we define
\begin{gather*}
    \Theta_\lambda(e)\coloneqq \{z\in \mathbb{C}\,|\,0\leq\op{Re}z\leq \ell(e),-\lambda/2\leq\op{Im}z\leq \lambda/2\}\backslash \{0,\ell(e)\},
\end{gather*}
which is a closed strip with two boundary punctures. We view $\Theta_\lambda(e)$ as a thickening of $\op{int}(e)= \{z\in \mathbb{C}\,|\,0\leq\op{Re}z\leq \ell(e),\op{Im}z=0\}$, where $e$ is oriented in the {\em negative} $\op{Re}(z)$-direction. The two boundary punctures split the two vertical boundary components into four pieces.

Next we define the {\em thickening $\Theta_\lambda(T)$} of $T$ as follows:
For an interior vertex $v\in V_{\op{int}}$, denote the cyclically ordered edges adjacent to $v$ by $e_0,\dots,e_{|v|-1}$ such that $h_v(e_i)=i$, where $e_0$ is the outgoing edge. We then identify the bottom left vertical boundary of $\Theta_\lambda(e_i)$ with the top left vertical boundary of $\Theta_\lambda(e_{i+1})$ for $i=1,\dots,|v|-1$; identify the bottom left vertical boundary of $\Theta_\lambda(e_{|v|-1})$ with the bottom right vertical boundary of $\Theta_\lambda(e_0)$; identify the top left boundary of $\Theta_\lambda(e_{1})$ with the top right vertical boundary of $\Theta_\lambda(e_0)$.
We then fill the interior removable puncture.
See Figure \ref{fig: inner glue}.
\begin{figure}[ht]
    \centering
    \includegraphics[width=11cm]{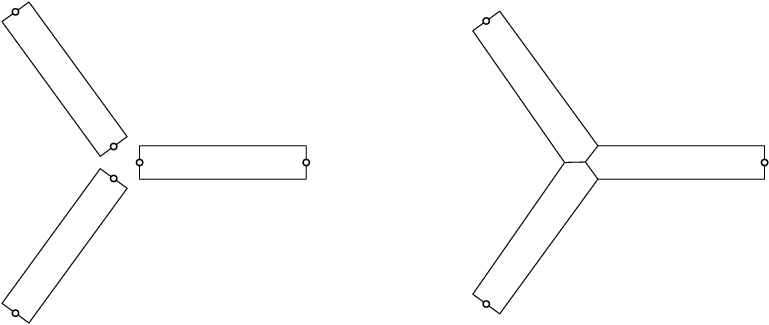}
    \caption{}
    \label{fig: inner glue}
\end{figure}

If $e$ is adjacent to a diagonal vertex $v\in V_{\op{diag}}$, we identify the top right and bottom right vertical boundaries of $\Theta_\lambda(e)$. 
We then fill the removable puncture and convert it into an interior marked point.
See Figure \ref{fig: box glue}.
\begin{figure}[ht]
    \centering
    \includegraphics[width=10cm]{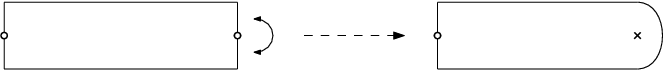}
    \caption{}
    \label{fig: box glue}
\end{figure}

If $e$ is adjacent to a cusp vertex $v\in V_{\op{cusp}}$, we identify the top right and bottom right vertical boundaries of $\Theta_\lambda(e)$, fill the removable puncture but do not convert it into a marked point.

The result of the above operations will be denoted $\Theta_\lambda(T)$.

\begin{remark}\label{rmk: when ell is zero}
    Observe that if $\ell(e)=0$ for an edge $e$ adjacent to a diagonal vertex, then $\Theta_\lambda(T)$ is equivalent to $\Theta_\lambda(T')$, where $T'$ is obtained by removing the edge $e$ and concatenating the adjacent edges $e',e''$.  There will also be an interior marked point at $(\ell(e'),0)\in \Theta_\lambda(e'\cup e'')$.
\end{remark}

\subsubsection{Gluing squid legs} \label{subsubsection: gluing squid legs}

Fix $\lambda>0$ small. Let $l\geq k\geq 0$. Let $\mathcal{Y}^\lambda_{m,k,l-k,s}$ be the subset of $\mathcal{Y}_{m,k,l-k,s}$ consisting of 
$$Y_{m,k,l-k,s}=(Y_{m,k}, (\{T_1,\dots, T_n\}, \{t_1,\dots,t_n\})),$$ 
where the following spacing condition holds:
\begin{itemize}
    \item[(Sp)] $t_0:=0< t_1<\dots <t_n< 1=:t_{n+1}$ and each $t_{i+1}-t_i> 2\lambda$.
\end{itemize}
(Sp) is a more precise version of (P1) from Lemma~\ref{lemma: finite in number}. Also, for $C\gg 0$ fixed, let $\mathcal{Y}^{\lambda,C}_{m,k,l-k,s}$ be the subset of $\mathcal{Y}^\lambda_{m,k,l-k,s}$ such that none of the trees $T_i$ have edges adjacent to a diagonal vertex with $\ell< C\lambda$.

For the purposes of analyzing $\mathcal{F}^{\nu,\epsilon}_{\op{Mor}}$ it suffices to consider $m=1$. 

Given $Y_{1,k,l-k,s}\in \mathcal{Y}^\lambda_{1,k,l-k,s}$, we glue the thickening $\Theta_\lambda(T_i)$ of each tree $T_i$ to $Y_{1,k}$ as follows: Let $e_0$ be the edge adjacent to the root vertex of $T_i$. Then we glue $\{\op{Re}z=0\}\cap \Theta_\lambda(e_0)\subset \Theta_\lambda(T_i)$ to $\partial_{2}Y_{1}$ via the map
\begin{gather*}
    z\mapsto \phi_{Y_1}(t_i-\op{Im}z),
\end{gather*}
and then fill the removable puncture. 
See Figure \ref{fig: root glue}.
\begin{figure}[ht]
    \centering
    \includegraphics[width=13cm]{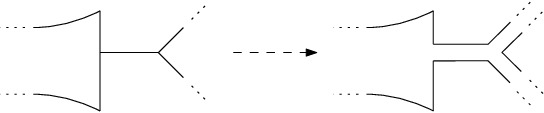}
    \caption{}
    \label{fig: root glue}
\end{figure}

For each $l\geq0$, we have defined the map
\begin{equation}
    \Theta_\lambda\colon \bigsqcup_{0\leq k\leq l,\,s\geq0}\mathcal{Y}^\lambda_{1,k,l-k,s}\to \mathcal{Y}_{1,l}.
\end{equation}

The following lemma is a generalization of \cite[Lemma 14.5]{fukaya1997zero} and is stated without proof: 

\begin{lemma}
\label{lemma-local diff}
    Suppose $C>0$ is a fixed large constant, $\lambda_0>0$ is sufficiently small, and $\lambda\in(0,\lambda_0]$.  Then
    $\Theta_\lambda|_{s=0}$ is a local diffeomorphism, when restricted to the subset 
    $$\bigsqcup_{0\leq k\leq l} \mathcal{Y}^{\lambda,C}_{1,k,l-k,0} \subset \bigsqcup_{0\leq k\leq l} \mathcal{Y}^\lambda_{1,k,l-k,0}.$$ 
\end{lemma}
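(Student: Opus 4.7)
The plan is to mirror Fukaya–Oh's proof of \cite{fukaya1997zero}[Lemma 14.5], upgraded to accommodate the interior marked points that arise from (a) the branch marked points of $Y_{1,k}$ and (b) the diagonal-vertex fold construction of Section~\ref{subsubsection: thickening}. Since source and target are manifolds of the same dimension, it suffices to verify that $d\Theta_\lambda$ is injective at every point, and then invoke the inverse function theorem.

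First I would carry out the dimension count. Stratifying by the combinatorial types of the ribbon trees $T_1,\dots,T_n$, each $T_i$ is trivalent with one root and $r_i$ diagonal vertices (and no cusp vertices, since $s=0$), so an Euler-characteristic computation yields $|E(T_i)|=2r_i-1$. Adding $2k$ parameters for the marked points of $Y_{1,k}$, $n$ parameters for the attaching heights $t_1,\dots,t_n$, and $\sum_i|E(T_i)|$ parameters for the edge lengths, the source dimension equals $2k+n+2(l-k)-n=2l$, which matches $\dim\mathcal{Y}_{1,l}$. Smoothness of $\Theta_\lambda$ is clear from the construction, since every identification is an analytic biholomorphism of flat strips and every puncture filling is removable.

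Next I would prove injectivity of $d\Theta_\lambda$ by producing an approximate local inverse. Decompose $\Sigma_\lambda:=\Theta_\lambda(Y_{1,k,l-k,0})$ into a thick piece (a small retract of $Y_{1,k}$), trivalent hexagonal pieces at internal vertices, thin strip pieces (the interiors of $\Theta_\lambda(e)$), and diagonal-loop pieces. Each thin strip around an edge $e$ has a conformal modulus comparable to $\ell(e)/\lambda$; when $\ell(e)\geq C\lambda$ this modulus is large, and the flat-strip model recovers $\ell(e)$ up to an $O(e^{-C})$ error, exactly as in Fukaya–Oh. The heights $t_i$ are recovered from the positions on $\partial_2 Y_1$ at which the thin strips emerge, and the $k$ interior marked points of $Y_{1,k}$ are identified with the interior marked points of $\Sigma_\lambda$ that do not sit inside a diagonal loop. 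Composing these reconstruction maps gives an approximate inverse of $\Theta_\lambda$, showing that $d\Theta_\lambda$ is an isomorphism modulo $O(e^{-C})$ corrections. Choosing $C$ sufficiently large and $\lambda_0$ sufficiently small then makes $d\Theta_\lambda$ a genuine isomorphism, completing the proof.

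The main obstacle is to make quantitative the identification between edge lengths and conformal moduli of thin parts, uniformly in $\lambda\in(0,\lambda_0]$ and across combinatorial strata. This is precisely the role of the hypothesis $\ell(e)\geq C\lambda$ for edges adjacent to diagonal vertices: it ensures that the marked point produced at the tip of a diagonal loop remains at definite conformal distance from the hexagonal junction region, so the loop modulus genuinely reflects $\ell(e)$ rather than becoming entangled with the vertex geometry. For edges between two internal vertices the analogous estimate is already contained in \cite{fukaya1997zero}, so the novel input here is limited to a uniform gluing/modulus estimate at diagonal-vertex loops, which parallels the standard annular neck-stretching analysis.
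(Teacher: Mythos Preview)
The paper does not actually prove this lemma: it is explicitly ``stated without proof'' as a generalization of \cite[Lemma~14.5]{fukaya1997zero}. Your proposal follows precisely the Fukaya--Oh strategy that the authors point to --- dimension match via an Euler-characteristic count on trivalent trees, then an approximate inverse built from conformal moduli of thin strips --- so you are aligned with what the authors evidently have in mind. Your identification of the role of the hypothesis $\ell(e)\geq C\lambda$ (keeping the diagonal-loop marked point at definite modulus from the adjacent vertex region) is exactly the new wrinkle beyond \cite{fukaya1997zero}, and your outline handles it correctly.
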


\subsubsection{Additional requirements for a consistent choice of perturbation data for $\{\mathcal{Y}_{1,l}\}$} \label{subsubsection: additional requirements}

Given a consistent collection of perturbation data on $\{\mathcal{Y}_{1,k,r,0},\,k,r\geq0\}$ and $\lambda_0>0$ small, our goal is to impose additional conditions on a consistent choice of perturbation data for $\{\mathcal{Y}_{1,l}\}$, which depends on $\Sigma^{\epsilon}_\nu$ (i.e., on $\epsilon$ and $\nu$) and $\lambda\in (0,\lambda_0]$, so that it is compatible with the perturbation data on $\{\mathcal{Y}_{1,k,r,0},\,k,r\geq0\}$. For consistency, we want $J_{Y_{1,l}}$ for $\Sigma^\epsilon_\nu$ to limit to $J_{Y_1}$ as $\epsilon\to 0$, $\nu\to \infty$, and $\lambda\to 0$. We do not need to consider the Hamiltonian functions because they are assumed to be zero near the zero section of $T^*C$.

The main complication is that, in contrast to \cite{fukaya1997zero}, $\Theta_\lambda|_{s=0}$ is neither injective nor surjective. Given a sequence $({Y}^n_{1,k+r})_{n\in \Z_{>0}}$ in $\mathcal{Y}_{1,k+r}$ that limits to $\mathcal{Y}_{1,k,r,0}$, the key issue is to distinguish which branched points limit to diagonal vertices and which ones remain as branched points.  Since $\Theta_\lambda|_{s=0}$ is not injective, there is an ambiguity in distinguishing these two groups. 
The solution is to specify $J_{Y_{1,k+r},\lambda}$ only on a subset of $\mathcal{Y}_{1,k+r}$ where we can easily separate the two groups of branched points, and then extend it to all of $\mathcal{Y}_{1,k+r}$ in a smooth manner.

We first observe that all the marked points of $\Theta_\lambda(Y_{1,0,r,0})$ that arise from the diagonal vertices are close to $\partial_2Y_1$.  Let us use the model $(-\infty,0]\times [0,1]\subset \C$ for $Y_1$ from Equation~\eqref{eqn: model for Ym-1}. Then:

\begin{lemma}
\label{lemma-position}
    Fix a positive integer $l$ and $\alpha_0>0$ small. There exist $\lambda_0>0$ small and a smooth function $g:(0,\lambda_0)\to \R_{>0}$ such that:
    \be
    \item $\lim_{\lambda\to 0} g(\lambda)=0$;
    \item for any $\lambda\in (0,\lambda_0)$ and $Y_{1,0,r,0}\in\mathcal{Y}^\lambda_{1,0,r,0}$, $0\leq r\leq l$, all the marked points in $\Theta_\lambda(Y_{1,0,r,0})$ satisfy $\op{Re}z>-g(\lambda)$; and
    \item the restriction of $\Theta_\lambda$ to $\mathcal{Y}^{\lambda,C}_{1,0,r,0}$ is a bijection onto its image in $\mathcal{Y}_{1,r}$ for $\lambda\in (0,\lambda_0)$.
    \ee
\end{lemma}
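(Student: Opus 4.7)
The plan is to analyze how the thickening $\Theta_\lambda$ conformally squeezes the interior marked points toward $\bdry_2 Y_1$, using the fact that each tree $T_i$ is attached to $\bdry_2 Y_1$ through a collar strip of width $\lambda$.  When $k=0$, the underlying domain $Y_{1,0}=Y_1$ is clean of branch points, so $\Theta_\lambda(Y_{1,0,r,0})$ is a disk with three boundary punctures and exactly $r$ interior marked points (all arising from diagonal vertices); uniformization places it in the standard half-strip model $(-\infty,0]_s\times[0,1]_\tau$, against which we measure $\op{Re} z$.

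For (1) and (2) I would run an extremal-length argument.  For each interior marked point $p$ of $\Theta_\lambda(T_i)$, an annular neighborhood in $\Theta_\lambda(T_i)$ that separates $p$ from the root collar has conformal modulus bounded below by a universal constant times $1/\lambda$, since the collar has width $\lambda$ and the collar length is bounded below once we note that $r\leq l$ keeps the combinatorial type in a finite list and the spacing condition (Sp) keeps the collars from overlapping.  Standard conformal estimates (Koebe/Gr\"otzsch) then place the image of $p$ in the standard half-strip within Euclidean distance $g_0(\lambda)$ of $\phi_{Y_1}(t_i)\in\bdry_2 Y_1=\{s=0\}$, with $g_0(\lambda)\to 0$ as $\lambda\to 0$ uniformly in $0\leq r\leq l$ and in the (finitely many) combinatorial types of the $T_i$.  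Setting $g(\lambda)\coloneqq g_0(\lambda)$ yields (1) and (2); this step is close in spirit to the analogous estimate in \cite[Section 14]{fukaya1997zero}.

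For (3) I would upgrade the local diffeomorphism of Lemma~\ref{lemma-local diff} to injectivity onto the image as follows.  Suppose $Y,Y'\in \mathcal{Y}^{\lambda,C}_{1,0,r,0}$ have the same image under $\Theta_\lambda$.  Step (2) shows that the $r$ interior marked points of the image fall into $n$ clusters of conformal diameter at most $g(\lambda)$ located near the attachment points, while (Sp) separates distinct $\phi_{Y_1}(t_i)$ by at least $2\lambda\gg g(\lambda)$ once $\lambda$ is small.  Hence the integer $n$, the partition of the marked points into clusters, and the approximate values of the $t_i$ are intrinsic to the image.  Within each cluster, the lower bound $\ell(e)\geq C\lambda$ on edges adjacent to diagonal vertices (the defining condition of $\mathcal{Y}^{\lambda,C}$) keeps the relative conformal positions of the marked points nondegenerate, and by an inductive unfolding of the tree from its root outward --- entirely parallel to \cite[Section 14]{fukaya1997zero} --- the combinatorial type of each $T_i$ and its edge lengths are then uniquely determined from those positions, forcing $Y=Y'$.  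The main obstacle is this final reconstruction step: one must rule out accidental coincidences between distinct tree metrics under $\Theta_\lambda$, and this is precisely where the $C\lambda$ lower bound is essential, as it prevents the edge-collapse degenerations of Remark~\ref{rmk: when ell is zero} (and their higher-valence analogues) from creating ambiguities.
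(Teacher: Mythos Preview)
The paper does not prove this lemma --- it is stated directly after the one-sentence heuristic ``all the marked points of $\Theta_\lambda(Y_{1,0,r,0})$ that arise from the diagonal vertices are close to $\partial_2 Y_1$,'' with no argument given, and the text moves on immediately.  (The preceding Lemma~\ref{lemma-local diff} is likewise stated without proof as a generalization of \cite[Lemma~14.5]{fukaya1997zero}.)  So there is nothing on the paper's side to compare your sketch against beyond that heuristic and the implicit reference to Fukaya--Oh.

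Your outline is in the right spirit, but the modulus estimate behind (2) has a gap.  You assert that an annulus in $\Theta_\lambda(T_i)$ separating the marked point $p$ from the root has modulus $\gtrsim 1/\lambda$ because ``the collar length is bounded below once we note that $r\leq l$ keeps the combinatorial type in a finite list.''  This is not correct: bounding $r$ bounds the number of diagonal vertices and hence the combinatorics of the $T_i$, but it imposes no lower bound on any edge length --- the $\ell(e)$ are continuous parameters in $\mathcal{Y}^\lambda_{1,0,r,0}$ and may be arbitrarily small (cf.\ Remark~\ref{rmk: when ell is zero}).  When the tree-distance from $p$ to the root is short, the separating annulus you describe has small modulus, and the argument as written does not produce a uniform $g(\lambda)$.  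A cleaner route that sidesteps this: each $\Theta_\lambda(T_i)$ meets $Y_1$ only along the attachment interval $I_i$ of length $\lambda$, and $\op{Re}\psi=0$ on the outer tree boundary (which lies in $\partial_2 D$); the maximum principle then gives $\op{Re}\psi(p)\geq \min_{I_i}\op{Re}\psi$ for every $p\in\Theta_\lambda(T_i)$, reducing the question to a bound on $\op{Re}\psi$ along the slit $I_i\subset Y_1$ that is independent of the internal tree geometry and governed by $\lambda$ alone.  Your argument for (3), promoting the local diffeomorphism of Lemma~\ref{lemma-local diff} to global injectivity via the clustering from (2) together with the $C\lambda$ lower bound, is reasonable, though note it tacitly uses that distinct clusters stay separated after uniformization, which requires comparing $g(\lambda)$ to the spacing $2\lambda$ from (Sp) under $\psi$ rather than in the pre-gluing coordinates.
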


Letting $h:(0,\lambda_0)\to \R_{>0}$ be a smooth function such that $\lim_{\lambda\to 0} h(\lambda)=0$ and $\lim_{\lambda\to 0} \tfrac{h(\lambda)}{g(\lambda)}=+\infty$, we define
\begin{equation*}
    E_{k,r}^\lambda:=\{Y_{1,k+r}\in\mathcal{Y}_{1,k+r}\,|\,Y_{1,k+r}\in\Theta_\lambda(\mathcal{Y}^\lambda_{1,k,r,0}),\,\text{$k$ marked points have $\op{Re}z<-h(\lambda)$}\},
\end{equation*}
for all $k+r=l$ and $0<\lambda<\lambda_0$. In words, $Y_{1,k+r}\in E_{k,r}^\lambda$ has $k$ marked points with $\op{Re}z<-h(\lambda)$ and the remaining $r$ marked points coming from the diagonal vertices satisfy $\op{Re}z>-g(\lambda)$.  We also define $E_{k,r}^{\lambda,C}$ to be the subset of $E_{k,r}^\lambda$ such that $Y_{1,k+r}$ is in the $\Theta_\lambda$-image of $\mathcal{Y}^{\lambda,C}_{1,k,r,0}$.

\begin{figure}[ht]
    \centering
    \includegraphics[width=8cm]{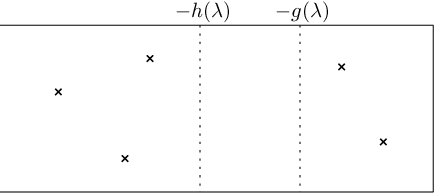}
    \caption{An example of $E^\lambda_{k,r}$.}
    \label{fig: branch lambda}
\end{figure}

We are now in a position to define the family $J_{Y_{1,k+r},\lambda}$ of almost complex structures. Given $Y_{1,k+r}=\Theta_\lambda(Y_{1,k,r,0})\in E^{\lambda}_{k,r}$, a tree $T_i$ of $Y_{1,k,r,0}$, and an edge $e\subset T_i$:
\be
\item if $Y_{1,k+r}\in E^{\lambda,C}_{k,r}$, then we set:
\begin{equation} \label{eqn: Jan}
    J_{Y_{1,k+r},\lambda}(w)\coloneqq J_{g_{T_i}(\op{Re}w)} ~~ \mbox{ on } ~~\Theta_\lambda(e)\subset\mathbb{C}_w,
\end{equation}
where we are identifying $e$ with $[0,\ell(e)]$ (the latter is oriented in the negative direction) and $g_{T_i}$ is as given in Definition~\ref{def-perturb data};
\item if $Y_{1,k+r}\in E^{\lambda}_{k,r}\setminus E^{\lambda,C}_{k,r}$, then for all edges $e$ adjacent to a diagonal vertex with $\ell< C\lambda$:
\begin{equation}
    J_{Y_{1,k+r},\lambda}(w)\coloneqq J_{g_{T_i}(0)} ~~ \mbox{ on } ~~\Theta_\lambda(e)\subset\mathbb{C}_w,
\end{equation}
and for all other edges we use \eqref{eqn: Jan}.
\ee 
We also set $J_{Y_{1,k+r},\lambda}=J_{Y_{1}}$ on $\op{Re}z<-h(\lambda)$. Since $\mathcal{J}(T^*C,\omega)$ is contractible, we can easily extend $J_{Y_{1,k+r},\lambda}$ to all $Y_{1,k+r}\in \mathcal{Y}_{1,k+r}$ and all $0<\lambda\leq \lambda_0$ in a smooth manner. We can also take $J_{Y_{1,k+r},\lambda}$ to be compatible with the compactification.

\subsection{The correspondence between two moduli spaces} \label{subsection: the correspondence}

The goal of Section~\ref{subsection: the correspondence} is to give a sketch of the proof of Conjecture \ref{conj-morse}, which is given in four steps corresponding to the following four subsections. We remark that all the steps of the proof have similar/analogous results that have already appeared in the literature.

\subsubsection{Step 1: The limit as $\epsilon\to 0$} \label{subsubsection: compactness}

In this step we take a subsequential limit of holomorphic maps $v\in \mathcal{T}^{l}_{\nu, \epsilon}(\mathbf{p}_\alpha,\mathbf{y},\mathbf{p}_\beta)$ as $\epsilon\to 0$ (for $\nu\gg 0$ fixed) and obtain a folded squid map. (Recall that the subscripts $\nu,\epsilon$ in $\mathcal{T}^{l}_{\nu, \epsilon}$ indicate that the target of the evaluation map is $\Sigma^{\epsilon}_\nu$ and $l$ indicates the number of marked/branch points.)

The precise statement is the following compactness result, which follows from Cant-Chen~\cite{CantChen2023adiabatic} (which in turn is a generalization of Floer \cite{floer1988morse}, Fukaya-Oh \cite{fukaya1996morse}, and Ekholm~\cite[Section 5]{ekholm2007morse}) and a more careful analysis of the limit gradient graph.

\begin{theorem}
    \label{thm: Morse-Floer-cpt}
    Let $l_0\geq 0$, $0\leq  l\leq l_0$, $\nu>0$ sufficiently large, and $\lambda>0$ sufficiently small.  Given a sequence
    \begin{equation*}
        (v_j: \dot F_j\to Y^j_{1,l}\times T^*C )\in\mathcal{T}^{l}_{\nu, \epsilon_j}(\mathbf{p}_\alpha,\mathbf{y},\mathbf{p}_\beta),
    \end{equation*}
    where $\epsilon_j\to 0$, after passing to a subsequence while keeping the same notation, $v_j$ limits to a folded squid map $(u,\gamma)\in\mathcal{P}^\nu_{k,l-k}(\mathbf{q},\mathbf{y},\mathbf{q})$ for some $0\leq k\leq l$, with domain 
    $$Y_{1,k,l-k,s}=(Y_{1,k}, (\{T_1,\dots,T_n\},\{t_1,\dots, t_n\})),$$ 
    such that, assuming the genericity of the perturbation data, (Sp) and (P2), (P4) from Lemma~\ref{lemma: finite in number}, as well as (P3') below (an upgrade of (P3)) hold.
    \be
    \item[(P3')] $Y_{1,k,l-k}\in E_{k,l-k}^\lambda$.
    \ee
\end{theorem}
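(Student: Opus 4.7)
The plan is to analyze the sequence $v_j$ in two regimes: the region where the boundary condition $\Sigma^{\epsilon_j}_\nu$ stays uniformly far from the zero section (which gives the ``squid body'') and the region where the boundary condition collapses onto the zero section as $\epsilon_j\to 0$ (which is where the adiabatic limit/Morse tree analysis applies and produces the ``squid legs''). Throughout, $\nu$ is fixed and large enough that the folding modification of $\Sigma$ is supported in an arbitrarily small neighborhood of $Z$, so for the purposes of the limit behavior we may pretend that $\Sigma^{\epsilon_j}_\nu$ is locally the graph of $\epsilon_j\, df_i$ away from $Z$ and treat the cusp/swallowtail region as a separately controlled local model.

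\textbf{Step 1: Gromov compactness and decomposition of the limit.} First I would apply standard Gromov compactness to the sequence $v_j:\dot F_j\to Y^j_{1,l}\times T^*C$. Energies are a priori bounded by the action difference between the asymptotic orbits at $\mathbf{p}_\alpha,\mathbf{y},\mathbf{p}_\beta$, which are uniform in $j$ because the asymptotic conditions on $\Sigma^{\epsilon_j}_\nu$ are obtained from those on $\Sigma$ by a fixed Hamiltonian isotopy. After extracting a subsequence, the base $Y^j_{1,l}$ degenerates in $\overline{\mathcal{Y}}_{1,l}$ and the total cover $\dot F_j$ degenerates to a nodal curve. Because $\epsilon_j\to 0$, the restriction of $\pi_{T^*C}\circ v_j$ to the boundary arcs mapping to $\Sigma^{\epsilon_j}_\nu$ becomes $C^0$-close to a map into the zero section $C\subset T^*C$, so any component containing such a boundary arc has its boundary pulled onto the zero section. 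This is the source of the dichotomy between squid body and squid legs.

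\textbf{Step 2: Squid body.} The components of the limit that retain a positive symplectic area after rescaling, and whose $\partial_2$-boundary is not forced to collapse, form an honest solution $(F,u)\in \mathcal{T}_0^{k}(\mathbf{q},\mathbf{y},\mathbf{q})$ whose $\bdry_2$-boundary is mapped to the zero section and which carries some number $k\leq l$ of the original branch points of $\pi_{Y_1}\circ v_j$. The remaining $l-k$ branch points escape towards $\partial_2 Y_1$ as $j\to\infty$; the rate at which they escape is controlled by $\epsilon_j$ and (after rescaling) determines the positions $t_i$ of the squid legs. To extract the spacing condition (Sp), I would use the genericity of the domain-dependent almost complex structure: after discarding a codimension-$1$ locus in parameter space, the escaping branch points separate with normalized distance $\geq 2\lambda$, giving both (Sp) and condition (P3') that $Y_{1,k,l-k}\in E^{\lambda}_{k,l-k}$. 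Condition (P3) of Lemma~\ref{lemma: finite in number} (marked points away from $\bdry_2 Y_1$) for the surviving $k$ points in the body is immediate because these branch points stay in a compact region of $Y_1$ disjoint from $\partial_2 Y_1$.

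\textbf{Step 3: Squid legs via the adiabatic limit theorem.} For each collapsing strip region near $\partial_2 Y_1$ (corresponding to an escaping branch point or to a tree component of the limit), I would rescale the normal direction by $1/\epsilon_j$ and apply the adiabatic limit theorem of Fukaya--Oh \cite{fukaya1997zero} and Ekholm \cite{ekholm2007morse} in the multi-strand form developed by Cant--Chen \cite{CantChen2023adiabatic}. Because the Lagrangian is locally the graph of $\epsilon_j df_i$, strips with boundary on different sheets $\Sigma^{\epsilon_j}_\nu\cap\pi^{-1}(U)$ are exponentially close to gradient flow lines of the differences $f_{i_B(e)}-f_{i_T(e)}$ with respect to $g_{T_i}$. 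The resulting limits glue along trivalent interior vertices into folded metric ribbon trees $T_1,\dots,T_n$ obeying the Kirchhoff law (FM4), attached to $Y_{1,k}$ at the points $t_1,\dots,t_n\in[0,1]$ in the order in which they appear along $\partial_2 Y_1$. Collapsed branch points of $\pi_{Y_1}\circ v_j$ give rise to diagonal vertices (the two sheets meeting there are identified by the branching), while $\partial_2$-boundary arcs that run into the folded region around $Z$ limit to cusp vertices; this last correspondence is precisely the input of \cite[Lemma 3.23]{casals2025spectralnetworksbettilagrangians}, and is the reason we need $\nu$ large. The matching at the root vertices (FS4)--(FS5) is automatic because the same boundary arcs of $\dot F_j$ provide both the boundary of the body and the root data of the legs.

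\textbf{Step 4: Verifying trivalence, (P2), and (P4).} By generic choice of the families $(H_{Y_1},J_{Y_{1,l}})$ and of the metrics $g_{T_i}$, the limit trees are rigid and have all interior vertices trivalent, and no edge adjacent to a root vertex has length zero; these are (P2). Condition (P4) follows because, after fixing $\nu$, any edge whose image enters $U_{R_\nu}$ either limits to a cusp edge (hence is adjacent to a cusp vertex) or would yield an obstruction to transversality that can be ruled out by perturbing $g_{T_i}$ in the interior of the corresponding edge; this is where I would again quote the inductive transversality argument from \cite[Proof of Theorem 1(i)]{casals2025spectralnetworksbettilagrangians}. Putting together Steps 1--4, the subsequential limit lies in $\mathcal{P}^\nu_{k,l-k}(\mathbf{q},\mathbf{y},\mathbf{q})$ with all the claimed properties.

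\textbf{Main obstacle.} The technical heart of the argument is Step~3: adapting the Cant--Chen adiabatic limit to holomorphic curves whose projection to the base $Y_1$ is a branched cover, in such a way that branch points that escape towards $\partial_2 Y_1$ convert correctly into diagonal vertices of a folded Morse tree. This requires a simultaneous rescaling of the cotangent direction by $1/\epsilon_j$ and a careful blow-up analysis near each escaping branch point, matching the local model of a square-root cover with boundary on two graph Lagrangians $\epsilon_j df_i,\epsilon_j df_j$ to a diagonal vertex where the sheets $i$ and $j$ are glued. An analogous local model already appears implicitly in the M2-brane picture of \cite{neitzke2020q} for $N=2$, and the general $N$ case can be reduced to the two-sheet case by localizing near each diagonal vertex.
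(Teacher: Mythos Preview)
Your overall architecture matches the paper's: invoke the adiabatic limit of Cant--Chen~\cite{CantChen2023adiabatic} and Ekholm~\cite{ekholm2007morse} to produce a pair $(u,\gamma)$ with $u$ a body curve and $\gamma$ a tuple of gradient graphs, then use index/genericity to pin down the claimed properties. However, there is a genuine gap at the point you yourself flag as the main obstacle, and your proposed resolution is not the one that actually works.

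The issue is not the local model at an individual escaping branch point; it is ruling out that several branch points cluster at an \emph{interior} point of a tree edge rather than migrating to a leaf. A priori the limit $\pi_0:G_i\to T_i$ is only a $\kappa$-fold cover away from (i) the diagonal leaves and (ii) finitely many interior non-vertex points $b_1,\dots,b_m$ where $\#\pi_0^{-1}(b_{i'})<\kappa$; type (ii) corresponds to branch points that stay together in the interior of a leg. Your blow-up/local-model sketch explains why a \emph{single} escaping branch point yields a diagonal vertex, but it does not exclude type (ii). The paper's argument is different and global: if a type (ii) vertex existed, two edges $e_1,e_2$ of $G_i$ would emanate from the same point over $b_{i'}$ with $\ell(e_1)=\ell(e_2)$, forcing the gradient trajectories $\gamma_i(e_1),\gamma_i(e_2)$ to meet \emph{synchronously} at their initial point. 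Generic domain-dependent metrics make this a codimension-$1$ constraint on the root parameter $t$, while the trajectories coming from cusp and diagonal vertices are already rigid; together these overdetermine $t$, so type (ii) vertices do not occur for generic data. This codimension count is the missing idea in your Step~3.

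Two smaller points: the paper also explicitly rules out interior and boundary ghost bubbles in the Gromov limit (citing \cite{DoanWalpuski2023} and \cite{EkholmShende2022Ghost}), which you do not mention; and the fact that the body $u$ has a single level (i.e.\ lies in $\mathcal{T}^k_{\nu,0}$ rather than its compactification) is deduced from $\operatorname{ind}=0$, which you should state rather than absorb into ``form an honest solution''. Conditions (Sp), (P2), (P3'), (P4) then follow from Lemma~\ref{lemma: finite in number} and genericity, essentially as you say.
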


Given locally defined real-valued functions $f_0, f_1$ on $C$ and a gradient trajectory $\gamma(s)$ of $\nabla_{g(\gamma(s))} (f_1-f_0)$, i.e., a map $\gamma:[A,B]_s\to C$ such that $\gamma'(s)=\nabla_{g(\gamma(s))} (f_1-f_0)(\gamma(s))$, an {\em approximate holomorphic strip corresponding to $\gamma$} is the map 
\begin{gather*}
    w_\gamma:[A,B]_s\times [0,1]_t\to T^*C,\\
    (s,t)\mapsto (\gamma(s), t df_1(\gamma(s))+ (1-t)df_0(\gamma(s)).
\end{gather*}
Note that $\op{Im}w_\gamma|_{t=0}\subset \op{Graph}(df_0)$ and $\op{Im} w_\gamma|_{t=1}\subset \op{Graph}(df_1)$, and we say $w_\gamma$ is {\em from $df_0$ to $df_1$.}

By ``$v_j$ limiting to $(u,\gamma)$'' we mean that, for $j\gg 0$, $v_j$ is close to breaking into $u$ and a collection of holomorphic strips that are close to approximate holomorphic strips corresponding to $\gamma|_e$, where $e$ ranges over all the edges of $G$ and the functions are defined by $\Sigma^{\epsilon_j}_\nu$ and $\gamma|_e$.

\begin{proof}
    By \cite{CantChen2023adiabatic} and \cite[Section 5]{ekholm2007morse}, after passing to a subsequence, the holomorphic map 
    $$v_j: \dot F_j\to Y^j_{1,l}\times T^*C$$ 
    limits to some pair $(u,\gamma)$ consisting of $u\in  \overline{\mathcal{T}_{\nu,0}^l({\bf q}, {\bf y}, {\bf q})}$ and an $n$-tuple 
    $$\gamma=(\gamma_1: G_1\to C,\dots, \gamma_n: G_n\to C)$$ 
    of Morse gradient graphs.  We first observe that ghost bubbles (both interior and boundary) can be eliminated using \cite{DoanWalpuski2023} and \cite{EkholmShende2022Ghost}; also see \cite[Section 3.7]{colin2020applications}.
    
    We claim that, for each $i=1,\dots, n$, the domain $G_i$ of $\gamma_i$ is a $\kappa$-folded lift $\pi_0: G_i\to T_i$ of a folded metric ribbon tree $T_i$.  Since each $\dot F_j$ is a $\kappa$-fold branched cover of $Y^j_{1,l}\in \op{Im}\Theta_\lambda$, there exists $\pi_0: G_i\to T_i$ which is a $\kappa$-fold cover except over (i) the diagonal vertices and (ii) finitely many interior non-vertex points $b_1,\dots, b_m$ of $T_i$, such that $\# \pi_0^{-1}(b_{i'})< \kappa$. Each vertex of type (i) corresponds to a branch point that is pushed to the ends and each vertex of type (ii) corresponds to a cluster of branch points are close together and remain in the interior.  Our goal is to eliminate the possibility of (ii). Since $\mathcal{E}(u)$ is generic and is a braid when viewed in $[0,1]_t\times C$, each constant $t$ slice of $\mathcal{E}(u)$ is a disjoint $\kappa$-tuple of points on $C$ and $\gamma_i$ is injective near its terminal points (i.e., $\pi^{-1}_0$ of the root vertex of $T_i$).  Suppose there are two edges $e_1$ and $e_2$ of $G_i$ that start at the same vertex over $b_{i'}$ and end over the root vertex. Since $\ell(e_1)=\ell(e_2)$, the gradient trajectories $\gamma_i(e_1)$ and $\gamma_i(e_2)$ must meet {\em synchronously} at their initial point. The perturbation data requirements from Section~\ref{subsubsection: additional requirements} make the gradient trajectories generic, and the synchronous meeting imposes a codimension $1$ condition on $t$.  On the other hand, the gradient trajectories emanating from cusp edges and the gradient trajectories $e_{glue}$ corresponding to diagonal vertices are rigid, and the existence of a type (ii) vertex overdetermines $t$.  Therefore, type (ii) vertices do not exist and the claim follows.
    
    Since the virtual dimension of $\mathcal{T}^{l}_{\nu, \epsilon_j}(\mathbf{p}_\alpha,\mathbf{y},\mathbf{p}_\beta)$ is $0$, there cannot be more than one level in $u\in \overline{\mathcal{T}_{\nu,0}^l({\bf q}, {\bf y}, {\bf q})}$ and hence $u\in \mathcal{T}_{\nu, 0}^l({\bf q}, {\bf y}, {\bf q})$. 
    The conditions (Sp), (P2), (P3'), and (P4) follow from the genericity of the perturbation data and Lemma~\ref{lemma: finite in number}.
\end{proof}

\subsubsection{Step 2: Adjusting the boundary of the squid body}

Let 
$$(u:\dot F\to Y_1\times T^*C,\gamma)\in \mathcal{P}^\nu_{k,l-k}({\bf q}, {\bf y}, {\bf q}),$$
which we assume satisfies (P1)--(P4) by Lemma~\ref{lemma: finite in number}. By definition, $\pi_{T^*C}\circ u$ maps the portion of $\bdry \dot F$ over $\bdry_2 Y_1$ (i.e., $(\pi_{Y_1}\circ u)^{-1}(\bdry_2 Y_1)$) to $C$. 

The goal of this step is to modify $u$ to $u^\circ$ (aka ``adjust the boundary of the squid body'') so that $(\pi_{Y_1}\circ u^\circ)^{-1}(\bdry_2 Y_1)$ is mapped to $\Sigma^\epsilon_\nu$, except for small neighborhoods of points where the squid legs are to be attached to $u^\circ$.

Let us view $\mathcal{E}(u)$ as a braid in $[0,1]_t\times C$.
Using (FS4) and (FS5) of Definition~\ref{defn: folded squid maps}, we can obtain a piecewise continuous lift $\mathcal{E}_{\epsilon,\nu}(u)$ of $\mathcal{E}(u)$, viewed as braid in $[0,1]_t\times\Sigma^\epsilon_\nu$ such that:
\be
\item[(B1)] $\mathcal{E}_{\epsilon,\nu}(u)$ starts at some ${\bf p}_\alpha$ and ends at some ${\bf p}_\beta$;
\item[(B2)] at each vertex $v\in \sqcup_i\pi^{-1}_0(v^i)$, where $v^i$ is the root vertex of $T_i$, as $t$ increases along $\bdry_2 Y_1$, the lift $\mathcal{E}_{\epsilon,\nu}(u)$ jumps from the sheet $\op{Graph} df_{i_T(v)}$ to the sheet $\op{Graph} df_{i_B(v)}$; and
\item[(B3)] these discontinuities at $t=t_i$ are the only discontinuities.
\ee

Suppose $\mu>0$ is small. Consider squares 
$$S_{v,\mu}=[-\mu/2,\mu/2]_x\times[-\mu/2,\mu/2]_y\subset C$$ 
of width $\mu$, centered $\sqcup_i \mathcal{E}(u)|_{t=t_i}$ and indexed by $v\in  \sqcup_i\pi^{-1}_0(v^i)$, so that $\mathcal{E}(u)$ intersects each square along $y=0$ and is oriented in the positive $x$-direction. If $\epsilon=\epsilon(\mu)>0$ is sufficiently small, say in $O(\mu^2)$, then we can modify $\Sigma^\epsilon_\nu$ to $\widetilde\Sigma^\epsilon_\nu$ over each $S_{v,\mu}$ as follows: 
\be
\item remove the two sheets of the preimage $\pi^{-1}(S_{v,\mu})$ corresponding to $df_{i_T(v)}$ and $df_{i_B(v)}$, where $\pi:\Sigma^\epsilon_\nu\to C$ is the projection; and
\item replace it with one sheet $d\tilde f$, where $\tilde f$ interpolates between $f_{i_T(v)}$ along $x=-\mu/2$ and $f_{i_B(v)}$ along $x=\mu/2$ and has small derivative.
\ee

\begin{figure}[ht]
    \centering
    \includegraphics[width=14cm]{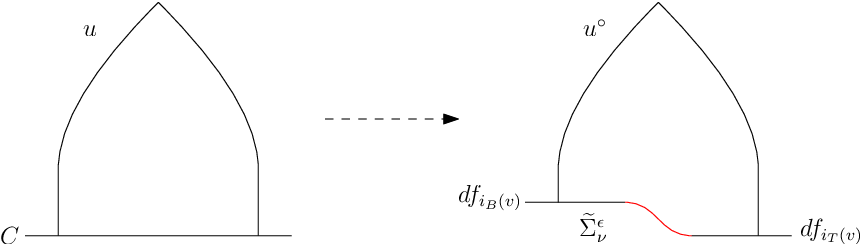}
    \caption{}
    \label{fig: picture 1}
\end{figure}

\begin{lemma} \label{lemma: existence of ucirc}
    For $\mu>0$ small and $\epsilon=\epsilon(\mu)>0$ small such that $\epsilon\in O(\mu^2)$, there exists a unique holomorphic map $u^\circ: \dot F\to Y_1\times T^*C$ in the moduli space 
    $\mathcal{T}^k_{\nu,\widetilde \Sigma ^\epsilon_\nu} ({\bf p}_\alpha,{\bf y}, {\bf q}_\beta)$, which is defined in the same way as $\mathcal{T}^k_{\nu,0}({\bf q}, {\bf y}, {\bf q})$, except that: 
    \be
    \item $({\bf q},{\bf y},{\bf q})$ is replaced by $({\bf p}_\alpha, {\bf y}, {\bf p}_\beta)$, where ${\bf p}_\alpha$ and ${\bf p}_\beta$ are as given in (B1); and 
    \item $C$ is replaced by $\widetilde \Sigma ^\epsilon_\nu$,
    \ee
    and which is close to $u$. Moreover, $u^\circ$ is transversely cut out.
\end{lemma}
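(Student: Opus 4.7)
The plan is to treat $u$ as an approximate solution of the Cauchy--Riemann equation with the new Lagrangian boundary condition $\widetilde\Sigma^\epsilon_\nu$ and correct it to an exact solution $u^\circ$ by a standard implicit function theorem argument, modeled on Floer--Hofer--Salamon type gluing. The four ingredients are: (i) an approximate solution $u_{\mathrm{app}}$ close to $u$ satisfying the new boundary and asymptotic conditions exactly; (ii) an estimate $\|\bar\partial u_{\mathrm{app}}\|=O(\mu)$ in a suitable $W^{1,p}$-setup; (iii) a right inverse $Q_{u_{\mathrm{app}}}$ of the linearized $\bar\partial$-operator bounded uniformly in $\epsilon,\mu,\nu$; and (iv) Newton--Picard iteration producing the unique nearby genuine solution.

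To build $u_{\mathrm{app}}$, observe that $\widetilde\Sigma^\epsilon_\nu$ is the graph of a piecewise-defined $1$-form $\alpha^\sharp$ on $C$: outside the surgery squares $S_{v,\mu}$ it agrees with $\epsilon\,df_j$ on the appropriate sheet, and over each $S_{v,\mu}$ it is $d\tilde f$ interpolating between $\epsilon\,df_{i_T(v)}$ and $\epsilon\,df_{i_B(v)}$. Since both endpoint forms have $C^1$-norm $O(\epsilon)$ and the interpolation occurs over length scale $\mu$, one can arrange $\|\alpha^\sharp\|_{C^1}=O(\epsilon/\mu)$, which becomes $O(\mu)$ under $\epsilon\in O(\mu^2)$. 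Using a cutoff $\chi$ supported in a thin collar of $(\pi_{Y_1}\circ u)^{-1}(\partial_2Y_1)$, define $u_{\mathrm{app}}$ by vertically translating the boundary values of $u$ so as to land on $\widetilde\Sigma^\epsilon_\nu$ and extending into the interior via $\chi$, with analogous corrections near the asymptotic ends so that the chord limits become $\mathbf{p}_\alpha,\mathbf{p}_\beta$ exactly (this is possible because the asymptotic Hamiltonian vanishes for $|p|<\epsilon$). The resulting defect $\bar\partial u_{\mathrm{app}}$ is supported in the collar and satisfies $\|\bar\partial u_{\mathrm{app}}\|_{L^p}=O(\mu)$ in the appropriate weighted norm.

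By Lemma~\ref{lemma-dim3}, $u$ is transversely cut out in $\mathcal{T}^k_{\nu,0}(\mathbf{q},\mathbf{y},\mathbf{q})$, so $D_u$ has trivial kernel and cokernel and a bounded right inverse $Q_u$. Since the boundary-condition perturbation from $TC$ to $T\widetilde\Sigma^\epsilon_\nu$ is $C^1$-small of order $O(\mu)$, the standard Fredholm perturbation principle for operators with Lagrangian boundary conditions gives a right inverse $Q_{u_{\mathrm{app}}}$ of $D_{u_{\mathrm{app}}}$ whose norm is bounded uniformly in $\epsilon,\mu,\nu$. Newton--Picard iteration on $\bar\partial(u_{\mathrm{app}}+\xi)=0$, with $\xi$ ranging over $W^{1,p}$ sections of $u_{\mathrm{app}}^*T(Y_1\times T^*C)$ satisfying linearized boundary values in $T\widetilde\Sigma^\epsilon_\nu$, then converges to a unique small $\xi$ provided $\|Q_{u_{\mathrm{app}}}\|\cdot\|\bar\partial u_{\mathrm{app}}\|\ll 1$, which is ensured by $\epsilon\in O(\mu^2)$ once $\mu$ is sufficiently small. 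Setting $u^\circ=u_{\mathrm{app}}+\xi$ gives the required map, and its transversality is automatic from the same Fredholm setup (perturbing $D_{u_{\mathrm{app}}}$ to $D_{u^\circ}$ by a small amount preserves the bounded right inverse).

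The main obstacle is the quantitative control over the interpolation region $S_{v,\mu}$: a priori the second fundamental form of $\widetilde\Sigma^\epsilon_\nu$ there could blow up, spoiling the elliptic estimates. The choice $\epsilon\in O(\mu^2)$ is designed precisely so that $d\tilde f$ and its first derivatives are $O(\mu)$, keeping all perturbation-theoretic constants under uniform control and allowing the linearized boundary condition along $\partial_2Y_1$ to be compared to that along $C$ via a $C^1$-small diffeomorphism. Verifying these estimates at the interface between the unmodified and interpolated portions of $\widetilde\Sigma^\epsilon_\nu$ --- so that both the defect bound and the right-inverse bound are independent of $\epsilon,\mu,\nu$ in the relevant range --- is the technical heart of the argument.
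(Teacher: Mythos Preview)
Your proposal is correct and amounts to a detailed unpacking of the paper's one-line proof, which simply invokes the regularity of $u$ (Lemma~\ref{lemma-dim3}) together with Gromov compactness to conclude that a unique nearby solution exists for the perturbed boundary condition. The paper additionally remarks that $\widetilde\Sigma^\epsilon_\nu$ has boundary and that choosing $\epsilon$ small relative to $\mu$ keeps $u^\circ$ away from it---a point your implicit function theorem bounds handle implicitly, since $u^\circ$ is $C^0$-close to $u$ whose $\partial_2 Y_1$-boundary lies along $y=0$ in each $S_{v,\mu}$.
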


\begin{proof}
    This is immediate from the regularity of $u\in \mathcal{T}^k_{\nu,0}({\bf q}, {\bf y}, {\bf q})$ and Gromov compactness.  Note that $\widetilde \Sigma^\epsilon_\nu$ has boundary, but if $\epsilon>0$ is small in response to $\mu$, the curve $u^\circ$ will stay away from the boundary.
\end{proof}

\subsubsection{Step 3: Gluing Morse gradient graphs}

In this step we construct the holomorphic curves ``corresponding'' to Morse gradient graphs. 

Let $M$ be a $n$-dimensional manifold, $g$ a Riemannian metric on $M$, and $f_0,f_1$ generic smooth functions on $M$ (e.g., such that $f_1-f_0$ is Morse). Let $\gamma:[A,B]_\sigma\to M$ be a trajectory of $\nabla_{g(\sigma)} (f_1-f_0)$ which avoids a critical point of $f_1-f_0$, where $\nabla_{g(\sigma)}$ is with respect to a family $g(\sigma)$ of metrics which is $C^1$-close to $g$. For $\epsilon>0$ small, we define the {\em rescaled trajectory} $\gamma_\epsilon:[A/\epsilon,B/\epsilon]_s\to C$ as the trajectory of $\nabla_{g(\epsilon s)} \epsilon(f_1-f_0)$ with the same image as $\gamma$.

\begin{definition}
    For $\epsilon>0$ small, a $J_{g(\epsilon s)}$-holomorphic strip 
    $$v:[A/\epsilon,B/\epsilon]_s\times [0,1]_t\to T^*M$$ 
    from $\epsilon df_0$ to $\epsilon df_1$ {\em approximates $\gamma$} if the following hold:
    \be
    \item $J_{g(\epsilon s)}$ is a domain-dependent canonical almost complex structure that is close to $J_g$; 
    \item $v$ is $C^0$-close to an approximate holomorphic strip corresponding to $\gamma_\epsilon$ from $\epsilon df_0$ to $\epsilon df_1$, where the error is of order $O(\epsilon)$.  
    \ee
\end{definition}

\begin{definition}
    For $\epsilon>0$ small, a holomorphic map $v: \Theta_\lambda(T)\to T^*M$ (with domain-dependent $J$) {\em approximates} a Morse gradient graph $\gamma:T\to M$ (with Morse functions depending on the edges), if $v|_{\Theta_\lambda(e)}$ approximates $\gamma(e)$ for each edge $e$ of $T$, 
\end{definition}

\n
{\em Virtual extensions of holomorphic strips.} Let $\epsilon>0$ small.  Let $f_0,f_1:M\to \R$ be smooth functions that are linear on an open neighborhood $U\subset \R^n\subset M$ of $0$, and let $\gamma:(-\infty,0]_s\to \R^n$ be a trajectory of $\nabla (f_1-f_0)$ emanating from a critical point of $f_1-f_0$, such that $\gamma(0)=0$.  Assume that $g$ and $J_g$ are standard on $U\subset \R^n$ and $T^*U\subset \C^n$. Observe that there exists $C> 0$ for which $\gamma|_{[-C,0]}$ has image in $U$ and that $df_0, df_1$ are constant on $U$.   Let 
$$v:(-\infty,0]_s\times[0,1]_t\to T^*M$$ 
be a holomorphic half-strip from $\epsilon df_0$ to $\epsilon df_1$ that approximates $\gamma$.  

\begin{definition} \label{defn: virtual extension} $\mbox{}$
\be
\item A {\em virtual extension $\widetilde{v}$} of $v$, if it exists, satisfies the following:
\begin{itemize}
\item $\widetilde{v}=v$ on $(-\infty,0]\times[0,1]$; and
\item $\widetilde{v}$ on $[-C,\infty)\times [0,1]$ is the extension of $v|_{[-C,0]\times[0,1]}$ to a holomorphic half-strip $[-C,\infty)\times[0,1]\to \C^n$ from $\epsilon d\widetilde f_0$ to $\epsilon d\widetilde f_1$ that approximates a gradient trajectory $\widetilde \gamma$ of $\widetilde  f_1-\widetilde f_0$, where $\widetilde f_i: \R^n\to \R$ is a smooth extension of $f_i|_U$ such that $\widetilde \gamma$ ends at a critical point of $\widetilde f_1-\widetilde f_0$.
\end{itemize}
\item  A holomorphic half strip $v$ is {\em virtually extendable} if a virtual extension $\widetilde{v}$ of $v$ exists.
\ee
\end{definition}

We can similarly define the notions of virtual extension and virtual extendability to holomorphic strips $v: [A,B]_s\times[0,1]_t\to T^*M$ and to holomorphic curves that approximate Morse gradient graphs.

\s\n
{\em Edge models.}
We first prepare {\em edge models} of the following type: holomorphic strips corresponding to (a) finite-length gradient trajectories, (b) gradient trajectories that end at cusp edges, and (c) gradient trajectories that end at diagonal vertices. 

Type (a) curves are provided by the following:

\begin{theorem}   \label{thm: holomorphic strip}
Given $\epsilon>0$ small, there exists a $J_{g(\epsilon s)}$-holomorphic strip $v$ from $\epsilon df_0$ to $\epsilon df_1$ that approximates $\gamma$. Moreover, $v$ is virtually extendable with regular virtual extension.
\end{theorem}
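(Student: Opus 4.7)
The plan is to follow the classical adiabatic limit strategy of Floer, Fukaya-Oh, and Ekholm: take the approximate holomorphic strip associated to the rescaled trajectory $\gamma_\epsilon$, verify that its failure to be holomorphic is $O(\epsilon)$ in a suitable weighted Sobolev norm, then perturb it to a true $J_{g(\epsilon s)}$-holomorphic strip via a quantitative implicit function theorem. Virtual extendability and regularity of the extension follow from the same analytic package applied to an extended data set.

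First, define $w_\epsilon:[A/\epsilon,B/\epsilon]\times[0,1]\to T^*M$ by
\[
w_\epsilon(s,t) = \bigl(\gamma_\epsilon(s),\ \epsilon t\,df_1(\gamma_\epsilon(s)) + \epsilon(1-t)\,df_0(\gamma_\epsilon(s))\bigr),
\]
the approximate holomorphic strip from $\epsilon df_0$ to $\epsilon df_1$ corresponding to $\gamma_\epsilon$. A direct computation in Darboux coordinates, using property (2) of the canonical almost complex structure (which carries vertical tangent vectors to horizontal ones via $\nabla_g$) together with the rescaled gradient equation $\gamma_\epsilon'=\epsilon\nabla_{g(\epsilon s)}(f_1-f_0)\circ\gamma_\epsilon$, gives $\bar\partial_{J_{g(\epsilon s)}}w_\epsilon = O(\epsilon)$ pointwise, hence $O(\epsilon)$ in a weighted $L^p_\delta$ norm on the strip whose exponential weight $\delta$ is tuned to the Morse decay rate at the endpoints.

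Next, analyze the linearized operator $Dw_\epsilon$ on this weighted space. After the Fukaya-Oh/Ekholm rescaling, $Dw_\epsilon$ splits up to $O(\epsilon)$ error into a horizontal Jacobi-type operator $\partial_s - \mathrm{Hess}_g(f_1-f_0)$ along $\gamma$, acting on the base factor, and a vertical fiberwise Cauchy-Riemann operator on the cotangent fiber with uniformly invertible Dirichlet-type data. Genericity of $f_0,f_1$ and $g$ makes $\gamma$ regular, so the Morse operator is surjective with a uniformly bounded right inverse, and the vertical piece is uniformly invertible as well. Patching produces a right inverse $Q_\epsilon$ of $Dw_\epsilon$ of operator norm bounded independently of $\epsilon$. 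A quantitative implicit function theorem (Newton iteration) then yields a unique true solution $v$ at distance $O(\epsilon)$ from $w_\epsilon$ in the $L^p_\delta$-norm, and in particular $C^0$-close to it with error $O(\epsilon)$; this is the first claim, and regularity of $v$ is built in via surjectivity of $Dv$.

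For virtual extendability, extend $f_0,f_1|_U$ smoothly to $\tilde f_0,\tilde f_1:\R^n\to\R$ so that $\tilde f_1-\tilde f_0$ is Morse with a nondegenerate critical point at a prescribed endpoint of a gradient trajectory $\tilde\gamma$ extending $\gamma|_{[-C,0]}$; linearity of $f_0,f_1$ on $U$ makes this extension unobstructed outside $U$. Apply the same adiabatic construction to $\tilde\gamma$ on $[-C,\infty)\times[0,1]$ to obtain an approximating holomorphic half-strip, and use uniqueness in the Newton argument on the overlap $[-C,0]\times[0,1]$ to glue it to $v$; regularity of the extension reduces once again to the uniform right-inverse estimate. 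The main analytic obstacle throughout is precisely this uniform right-inverse bound on $Dw_\epsilon$ as $\epsilon\to 0$: the unweighted linearization degenerates in the adiabatic limit, and the weighted bound requires a careful match between $\delta$ and the spectral data of $\mathrm{Hess}(f_1-f_0)$ at the endpoints of $\gamma$. This is the technical heart of \cite{fukaya1997zero} and \cite{ekholm2007morse}, and is the step that makes essential use of the genericity hypotheses on $f_0,f_1$ and $g$.
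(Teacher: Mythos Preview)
Your approach is correct in spirit but considerably more involved than the paper's, and it contains an imprecision worth flagging. The paper's proof is one sentence: the statement is a rephrasing of Floer's classical bijection between complete Morse gradient trajectories and Floer holomorphic strips. One first virtually extends $\gamma$ to a complete trajectory $\tilde\gamma$ between nondegenerate critical points of $\tilde f_1-\tilde f_0$, applies Floer's result to obtain the full holomorphic strip $\tilde v$ (regularity is part of that package), and then restricts $\tilde v$ to $[A/\epsilon,B/\epsilon]\times[0,1]$ to get $v$. Virtual extendability and regularity of the extension are then tautological, since $\tilde v$ \emph{is} the extension.

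You reverse the order: you attempt to construct $v$ directly on the finite strip, and only afterward build the extension. This introduces a slip. You invoke an exponential weight $\delta$ ``tuned to the Morse decay rate at the endpoints'', but by hypothesis $\gamma$ avoids critical points of $f_1-f_0$, so there is no Morse decay at $s=A,B$ and no such weight is available. On the finite (though long) domain $[A/\epsilon,B/\epsilon]\times[0,1]$ the linearized problem needs explicit boundary conditions at the vertical ends, which you do not specify; the Fukaya--Oh/Ekholm weighted analysis you cite is set up for strips asymptotic to intersection points, not for truncated strips ending at generic points of the Lagrangians. Your final paragraph, where you extend the data and rerun the adiabatic construction on $\tilde\gamma$, is essentially the paper's argument --- and once you have that, the separate construction on the finite domain is redundant. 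The clean route is: extend first, cite Floer, then restrict.
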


\begin{proof}
    This is just a rephrasing of the bijection, due to Floer~\cite{floer1988morse}, of complete gradient trajectories between Morse critical points and Floer trajectories, i.e., holomorphic strips with boundary on $\op{Graph}(\epsilon df_0)$ and $\op{Graph}(\epsilon df_1)$ between the corresponding intersection points, followed by stretching.
\end{proof}

Type (b) curves were constructed by Ekholm \cite{ekholm2007morse}, and they are also extendable with regular virtual extension. The cusp edges which the gradient trajectories on $\op{Sym}^\kappa C$ end on are of the form 
$$(c\times C^{\kappa-1})\setminus \Delta\subset \op{Sym}^\kappa C,$$ 
where $c$ is a cusp edge in $C$ (i.e., cusp edges in $\Sigma^\epsilon_\nu$ projected down to $C$) and $\Delta$ is the big diagonal.  Such cusp edges are of Morse-Bott type, but can be converted to Morse type. 

A Type (c) curve is simply a holomorphic strip $v:[A,B]_s\times[0,1]_t\to T^*C$ from $\epsilon df_0$ to $\epsilon df_1$ that approximates a trajectory $\gamma$ --- given by Theorem~\ref{thm: holomorphic strip} --- together with a branched double cover corresponding to the automorphism $(s,t)\mapsto (A+B-s,1-t)$ of the domain.  By Condition~\ref{condition: real exact}, $\gamma$ is not a subset of a closed trajectory and is hence somewhere injective.  Hence $v$, viewed as an orbifold holomorphic map to $\op{Sym}^2 C$ that passes through the diagonal, admits a virtual extension which is regular. (Let $\widetilde u: \widetilde \Sigma \to V$ be the lift of an orbifold holomorphic map $u:\Sigma\to V/G$ to a global quotient orbifold.  Then $u$ is regular if the linearized $\overline\bdry$-operator $L$ on $\widetilde u$ restricted to the $G$-invariant parts of the domain and codomain is surjective. This in turn is implied by the surjectivity of $L$ itself by an averaging procedure.)

\s\n
{\em $Y$-shaped connectors.} Next we describe the local models, called {\em $Y$-shaped connectors}, that correspond to neighborhoods of the trivalent vertices.  Let $w_{a,b,c,d}$ with $a<b<c$ be a holomorphic map $D_3\to \C_{z=x+iy}$ such that:
\be
\item $\bdry_0 D_3$, $\bdry_1 D_3$, $\bdry_2 D_3$ are mapped to $y=a$, $y=c$, $y=b$ and
\item $w_{a,b,c,d}$ restricts to a biholomorphism $\op{int}(D_3)\stackrel\sim\to \{ a< y< c\}\setminus \{y=b, x\leq d\}$.
\ee
Writing ${\bf a}=(a_1,\dots, a_n), {\bf b}= (b_1,\dots, b_n), {\bf c}=(c_1,\dots, c_n), {\bf d}=(d_1,\dots, d_n)$ with $a_i< b_i< c_i$ for all $i$, we define 
\begin{gather*}
    w_{{\bf a},{\bf b}, {\bf c},{\bf d}}=(w_{a_1,b_1,c_1,d_1},\dots, w_{a_n,b_n,c_n,d_n}): D_3\to \C^n,
\end{gather*}

\begin{figure}[ht]
    \centering
    \includegraphics[width=6cm]{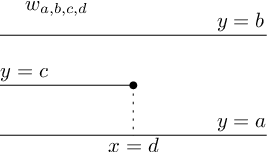}
    \caption{A component of a $Y$-shaped connector}
    \label{fig: picture 3}
\end{figure}

Let $f_1,f_2,f_3:\R^n\to \R$ be smooth functions such that $df_1$, $df_2$, and $df_3$ are constant and $\nabla(f_3-f_2)$ and $\nabla(f_2-f_1)$ are linearly independent. We may assume that:
\be
\item[(F)] All the coordinates of $\nabla(f_3-f_1)$, $\nabla(f_3-f_2)$, $\nabla(f_2-f_1)$ are positive, after a linear change of coordinates. 
\ee
We then set ${\bf a}=\nabla f_1$, ${\bf b}=\nabla f_2$, ${\bf c}=\nabla f_3$. This gives us a family of maps $w_{{\bf a},{\bf b}, {\bf c},{\bf d}}$, where ${\bf a}, {\bf b}, {\bf c}$ are fixed and ${\bf d}$ is variable. 

\begin{remark}
    There is a well-known (to the experts) error in the proof of gluing in Fukaya-Oh \cite{fukaya1996morse}.  There it is claimed that the $Y$-shaped connectors $w$ have surjective linearized $\overline\bdry$-operators 
    \begin{equation} \label{eqn: Dw}
        D_w: W^{1,p}_\delta(w^*T\C^n)\to L^p_\delta(\Lambda^{0,1}_{\C} w^*T\C^n),
    \end{equation}
    for $\delta>0$ small. The operators $D_w$, on the contrary, have $n$-dimensional cokernels, spanned by constant antiholomorphic differentials.  The Fukaya-Oh proof can be fixed in several more or less equivalent ways; see e.g., Ekholm \cite{ekholm2007morse} and Bao-Zhu~\cite{BaoZhu}.  The latter recasts the problem into an obstruction bundle gluing problem, together with a degree calculation. 
\end{remark}

The following gluing result is similar to that of \cite[Section 6]{ekholm2007morse}:

\begin{theorem} \label{thm: gluing along Y-shaped connectors}
   Given a folded Morse tree $(\pi_0:G\to T,\gamma:G\to C)$, one can glue the holomorphic strips of Types (a)--(c) into a holomorphic curve $v: \Theta_\lambda(T)\to \op{Sym}^\kappa (T^*C)$ corresponding to the Morse gradient graph $\gamma_\kappa: T\to \op{Sym}^\kappa C$, using $Y$-shaped connectors. The holomorphic curve $v$ is virtually extendable and regular.  There is a surjective degree $1$ map from the moduli space of virtually extendable holomorphic curves that approximate folded Morse trees and the moduli space of folded Morse trees. 
\end{theorem}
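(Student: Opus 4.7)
The approach is obstruction bundle gluing in the spirit of Ekholm~\cite{ekholm2007morse} and Bao-Zhu~\cite{BaoZhu}, the latter being designed precisely to repair the $n$-dimensional cokernel of the $Y$-shaped connector flagged in the remark above. I will build an approximate solution by splicing the Type (a)--(c) strips with the $Y$-connectors, then solve a finite-dimensional obstruction equation at each trivalent vertex to produce a genuine holomorphic curve.

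First I would construct a pre-glued map $v_{\mathrm{pre}}\colon \Theta_\lambda(T)\to\op{Sym}^\kappa(T^*C)$ as follows. On each $\Theta_\lambda(e)$ with $e$ away from the internal vertices I insert the Type (a) holomorphic strip of Theorem~\ref{thm: holomorphic strip} that approximates the rescaled trajectory $\gamma_\epsilon(e)$; on the thickening of a cusp edge I insert Ekholm's Type (b) half-strip; on the thickening of an edge adjacent to a diagonal vertex I insert the Type (c) orbifold strip, whose required somewhere-injectivity follows from Condition~\ref{condition: real exact}. At each interior trivalent vertex I splice in a $Y$-shaped connector $w_{\mathbf{a},\mathbf{b},\mathbf{c},\mathbf{d}}$, with $\mathbf{a},\mathbf{b},\mathbf{c}$ dictated by the three gradient values at the vertex and $\mathbf{d}$ a free parameter. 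The pieces are patched by standard cutoffs in exponentially weighted Sobolev spaces $W^{1,p}_\delta$. The Kirchhoff law (FM4) of Definition~\ref{def-folded Morse tree} ensures that the sheets around each vertex match correctly on the symmetric product, so $v_{\mathrm{pre}}$ descends consistently to $\op{Sym}^\kappa(T^*C)$. A standard estimate yields $\|\overline\partial v_{\mathrm{pre}}\|_{L^p_\delta}\lesssim e^{-c/\epsilon}$, with support concentrated in the splicing regions.

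The heart of the argument, and the expected main obstacle, is dealing with $\op{coker} D_w\cong \R^n$ at each $Y$-connector. Following Bao-Zhu, I would form an obstruction bundle $\mathcal{O}\to\mathcal{B}$ over the parameter space $\mathcal{B}$ of pre-gluing data, whose fibers model these cokernels (spanned by constant antiholomorphic one-forms), and project $\overline\partial v_{\mathrm{pre}}$ to obtain an obstruction section $\mathfrak{s}\colon\mathcal{B}\to\mathcal{O}$. A quadratic estimate plus Newton iteration on the $L^2$-orthogonal complement to the cokernel shows that genuine holomorphic curves close to $v_{\mathrm{pre}}$ are in bijection with zeros of $\mathfrak{s}$. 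The task then reduces to a finite-dimensional count: at each vertex, the leading-order obstruction equation is a linear map $\R^n_{\mathbf{d}}\to\R^n$ that is nondegenerate with positive determinant by assumption (F) on the signs of the coordinates of $\nabla(f_i-f_j)$, so each folded Morse tree contributes exactly one positively oriented zero. Multiplying vertex-by-vertex over all trivalent vertices of $T$ yields both existence and the surjective degree-$1$ count.

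Regularity of the resulting $v$ then follows because, once $\mathfrak{s}$ vanishes, the kernels of the linearized operators on Types (a)--(c) and on the $Y$-connectors (modulo the obstruction fibers already accounted for) assemble into a Mayer-Vietoris sequence with no remaining cokernel. Finally, $v$ is virtually extendable: each edge-strip admits a regular virtual extension by Theorem~\ref{thm: holomorphic strip} and Ekholm's construction, and these extensions are supported on the strip-like ends of $\Theta_\lambda(T)$ corresponding to the exterior vertices of $T$ --- regions untouched by the $Y$-connector splicing --- so they combine into a virtual extension of $v$ itself, whose regularity is inherited edge-by-edge from the component virtual extensions.
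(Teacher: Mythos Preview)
Your proposal is correct and follows essentially the same approach as the paper---obstruction bundle gluing in the Bao--Zhu style to handle the $n$-dimensional cokernel at each $Y$-connector, followed by a degree-$1$ count. The main organizational difference is that you preglue the entire tree at once and solve a global obstruction equation with the $\mathbf d$-parameters as the free variables, whereas the paper works \emph{inductively} from the leaves (cusp/diagonal vertices with their Type~(b)/(c) edge models) inward: at each trivalent vertex it enlarges the domain of $D_w$ by outgoing-end translation sections $\rho_1,\rho_2$, quotients out the component parallel to $\nabla(f_3-f_1)$, and absorbs the residual normal component $c(\rho_1-\rho_2)$ into the $1$-parameter family of the \emph{outgoing} trajectory $v_3$. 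The paper also explicitly notes that the trivalent gluing takes place in $(T^*C)^\kappa$ away from the big diagonal before passing to $\op{Sym}^\kappa$, which is how it handles the symmetric-product target you address via (FM4). One caution: your assertion that assumption (F) directly yields a positive-determinant obstruction map $\R^n_{\mathbf d}\to\R^n$ is not substantiated---(F) is merely a coordinate normalization making the gradient components positive, and the paper records the degree-$1$ conclusion as a separate calculation rather than as a consequence of (F).
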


Here we are viewing the folded Morse tree as a gradient graph $\gamma_\kappa: T\to \op{Sym}^\kappa C$.  Note that in the definition of a folded Morse tree the diagonal vertices correspond to intersections with topmost stratum of the big diagonal of $\op{Sym}^\kappa C$.

The sketch of the proof of Theorem~\ref{thm: gluing along Y-shaped connectors} below is similar in spirit to \cite{BaoZhu}, and we thank Erkao Bao for explaining the proof to us.

\begin{proof}[Sketch of proof] $\mbox{}$

\s\n
{\em Part A: Basic gluing at a trivalent vertex.} In this part we sketch the most basic gluing at a trivalent vertex, leaving out estimates and not treating the most general case. 

Let $\epsilon>0$ be small. We make the following simplifications:
    \be
    \item the ambient manifold is a $2$-manifold $M$ and $f_1,f_2,f_3:M\to \R$ are generic smooth functions;
    \item the incoming trajectories $\gamma_1$ and $\gamma_2$ of $\nabla(f_3-f_2)$ and $\nabla(f_2-f_1)$ are rigid (modulo domain $\R$-translation) and emanate from critical points $p_1, p_2\in M$; 
    \item the outgoing trajectories $\gamma_3$ of $\nabla(f_3-f_1)$ come in a $1$-dimensional family (modulo domain $\R$-translation), given by translations in the direction normal to $\gamma_3$, and go to a critical point $q\in M$;
    \item $\gamma_1$ and $\gamma_2$ meet at $v$ and one of the $\gamma_3$ leaves from $v$;
    \item there is a local chart $U$ about $v=0$ --- chosen independently of $\epsilon$ --- such that $df_1,df_2,df_3$ are constant on $U$ and satisfy (F) on $U$, after a small perturbation on $U$.
    \ee
    The general case will require keeping track of the intersections of the unstable manifolds emanating from $p_1,p_2$ and the stable manifold emanating from $q$.
    
    Let $v_1,v_2, v_3$ be holomorphic strips corresponding to $\gamma_1,\gamma_2,\gamma_3$, with virtual extensions, furnished by Theorem~\ref{thm: holomorphic strip}, and let $w_0$ be a $Y$-shaped connector that agrees with $v_i$, $i=1,2,3$, up to the affine linear term.  

    For $i=1,2$, the holomorphic map $v_i:(-\infty,0]_s\times [0,1]_t \to \C^2$ is a half-strip, and there are two options to make the linearized $\overline\bdry$-operator $D_{v_i}$ Fredholm: (i) impose a boundary condition along $s=0$ (as is done in \cite{fukaya1997zero} and \cite{BaoZhu}) or (ii) {\em virtually extend} $v_i$ to a holomorphic map $\widetilde{v}_i$ with domain $(-\infty,\infty)\times[0,1]$. The portion $s>0$ is understood to be ``virtual'' (i.e., does not actually exist in our gluing problem) and is only used to invert the error term using $D_{\widetilde{v}_i}$; after inverting the error the result is restricted to the portion $s\leq 0$. Let $\widetilde{v}_i^*\widetilde{TC}$ be the ``pullback bundle''. One can verify that the operator 
    $$D_{\widetilde{v}_i}: W^{1,p}_\delta(\widetilde{v}_i^*\widetilde{TC})\to L^p_\delta (\Lambda^{0,1}_{\C}\widetilde{v}_i^*\widetilde{TC})$$
    has Fredholm index $\op{ind}=0$ and is an isomorphism.
    The case of $\widetilde{v}_3$ is analogous. 

    For each $Y$-shaped connector $w$, we modify the operator $D_w$ to $\widetilde D_w$ so that the domain is 
    \begin{equation} \label{eqn: enlarged domain}
        W^{1,p}_\delta(w^*T\C^2)\oplus \R\langle \rho_1, \rho_2 \rangle,
    \end{equation}
    where $\rho_1,\rho_2$ are defined as follows: Let $[0,\infty)_r \times [0,1]_t$ be the strip-like end corresponding to the positive/outgoing end of $D_3$. Let $\rho(r):[0,\infty)\to \R_{\geq 0}$ be a smooth step function such that $\rho(r)=0$ for $r\in[0,1]$ and $\rho(r)=1$ for $r\in [2,\infty)$. Then define $\rho_i$, $i=1,2$, to be equal to zero on $D_3\setminus ( [0,\infty)\times [0,1])$ and equal to $\rho(r)e_i$ on $[0,\infty)\times[0,1]$, where $e_i$ is a standard unit vector in $\R^2$. One can verify that the Fredholm index $\op{ind}(\widetilde D_w)=0$ and $\widetilde D_w$ is an isomorphism.

    We now apply pregluing to $(v_1,v_2,v_3,w_0)$ to obtain $u_{(0)}$. We use the usual Newton iteration to construct a genuine holomorphic curve, and explain the initial steps of the iteration.  The errors contributing to $\overline\bdry u_{(0)}$ that come from interpolating between $v_i$ and $w_0$ for $i=1,2$ can be inverted using $(\widetilde D_{w_0})^{-1}$, which, after quotienting out a term parallel to $\nabla (f_3-f_1)$, gives a term of the form 
    $$\xi+ c(\rho_1 -\rho_2), \quad \xi\in W^{1,p}_\delta(w^*T\C^2), \quad c\in \R,$$
    where $c(\rho_1-\rho_2)$ is the component in the normal direction. Next, the error $\overline\bdry c(\rho_1-\rho_2)$ cannot be inverted when the domain of $D_{v_3}$ is of class $W^{1,p}_\delta$; hence we have an obstruction bundle problem, and the obstruction section vanishes roughly when $v_3$ has been translated in its $1$-dimensional family so that $w_0+\xi+\overline\bdry c(\rho_1-\rho_2)$ agrees with $v_3$ up to first order.  The actual calculation reduces to a degree calculation, which is $1$.

\s\n
{\em Part B.}   In this part we explain how to apply the trivalent vertex gluing to the case at hand. Let $(\pi_0:G\to T,\gamma:G\to C)$ be a folded Morse tree, viewed as a Morse gradient graph $\gamma_\kappa: T\to \op{Sym}^\kappa C$.  We start with gradient trajectories of $\gamma_\kappa$ in $\op{Sym}^\kappa C$ that emanate from diagonal or cusp vertices.  The corresponding edge models with virtual extensions exist and are of Type (b) or (c). The gradient trajectories intersect at a point in $\op{Sym}^\kappa C$ away from the big diagonal, and the gluing to a Type (a) edge model can be done in $(T^*C)^\kappa$.  We inductively attach edges to obtain a virtually extendable holomorphic curve $v: \Theta_\lambda(T)\to \op{Sym}^\kappa (T^*C)$ corresponding to $\gamma_\kappa: T\to \op{Sym}^\kappa C$.
\end{proof}

\subsubsection{Step 4: Folded squid leg gluing} 

In this step we attach squid legs to the squid body from Step 2. This yields a holomorphic map $\mathcal{T}^{l}_{\nu, \epsilon}(\mathbf{p}_\alpha,\mathbf{y},\mathbf{p}_\beta)$ that is close to breaking into $(u,\gamma)\in \mathcal{P}^\nu_{k,l-k}({\bf q}, {\bf y}, {\bf q})$ for $\epsilon>0$ small and $\nu\gg 0$. 

This squid leg gluing is analogous to the gluing problem treated in Oh-Zhu \cite[Theorem 1.8]{OhZhu2024adiabatic}. In that paper the authors gave a new direct proof that the moduli spaces of pearly trajectories of Cornea-Lalonde~\cite{CorneaLalonde2006cluster} and Biran-Cornea~\cite{BiranCornea2009} are equivalent to the usual moduli space of holomorphic curves with Lagrangian boundary via adiabatic gluing.  (In \cite{BiranCornea2009} the equivalence is proved using a type of continuation argument which glues pearls together but not with gradient trajectories.)

Let $\epsilon>0$ be small and $\nu\gg 0$. Let $u^\circ$ be the holomorphic map close to $u$ that is furnished by Lemma~\ref{lemma: existence of ucirc} and let $v$ be the  virtually extendable holomorphic curve corresponding to the folded Morse tree $\gamma$ from Step 3.

\s\n
{\em Connectors.} We describe the local models, called {\em connectors}, that correspond to neighborhoods of the junctures between $u$ and $\gamma$.  Let $w_0$ be a holomorphic map 
$$D_2=\R_s\times[0,1]_t\to \C_{z=x+iy}$$ 
such that:
\be
\item $\bdry_0 D_2=\{t=0\}$ and $\bdry_1 D_2=\{t=1\}$ are mapped to $y=0$ and $y=1$ and
\item $w_0$ restricts to a biholomorphism $\op{int}(D_2)\stackrel\sim\to \{y>0\}\setminus \{y=1, x\leq 0\}$.
\ee
Note that $w_0$ is unique up to domain $s$-translation.  

The map $w_0$ can be modified by domain translation by $s=s_0$ and target translation by $x=x_0$.  Let $w_{c,d}$ be such a map, which is uniquely characterized by the following property: as $s\to \infty$, $w_{c,d}(s,t)$ approaches $c e^{\pi(s+it)}+ d$, where $c>0$ and $d\in \R$.
Writing ${\bf c}=(c_1,\dots,c_{2\kappa}), {\bf d}=(d_1,\dots, d_{2\kappa})$, we define 
\begin{gather*}
    w_{\bf c, \bf d}=(w_{c_1,d_1},\dots, w_{c_{2\kappa}, d_{2\kappa}}): D_2\to \C^{2\kappa}.
\end{gather*}

\begin{figure}[ht]
    \centering
    \includegraphics[width=6cm]{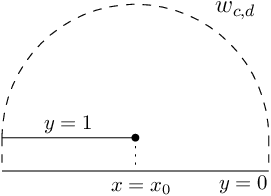}
    \caption{}
    \label{fig: picture 2}
\end{figure}

\s\n
{\em Gluing.} The gluing is very similar to the gluing from Step 3, and we explain the first steps of the Newton iteration. 

For simplicity we assume (SL1)--(SL4) below:
\be
\item[(SL1)] There is a single squid leg which is glued and the gluing occurs at $t=1/2$.
\ee
Viewing $u^\circ$ as a map $Y_1\to \op{Sym}^\kappa T^*C$, let $u^\circ_\tau$ be the restriction of $u^\circ$ to $\dot Y_{1,\tau}= Y_1\setminus \{\phi_{Y_1}(\tau)\}$, where $\tau\in (1/2-\lambda', 1/2+\lambda')$ and $\lambda'>0$ is small but independent of $\epsilon$. Choose a strip-like end
$$\mathcal{S}_\tau:=(-\infty,0]_s\times[0,1]_t\subset \dot Y_{1,\tau}$$ 
on a neighborhood of $\phi_{Y_1}(\tau)$, which depends on $\tau$ but not on $\epsilon$.
\be
\item[(SL2)] there exist local coordinates $U\subset \R^{2\kappa}$ for $\op{Sym}^\kappa C$ and $T^*U\subset T^*\R^{2\kappa}=\C^{2\kappa}$ for $\op{Sym}^\kappa T^*C$ such that the endpoint of $\gamma$ is $(0,0)\in T^*C$;

\item[(SL3)] $\gamma$ restricted to $U$ is a trajectory of $\nabla(f_1-f_0)$, where $\nabla f_0=0$ and $\nabla f_1=(1,\dots,1)$.
\ee
For $\epsilon>0$ small, rescaling $\C^{2\kappa}$ by $1/\epsilon$ sends $\op{Graph} \epsilon df_0$ and $\op{Graph} \epsilon df_1$ to $\op{Graph} df_0$ and $\op{Graph} df_1$.  
\be
\item[(SL4)] There exist $R_1(\epsilon), R_2(\epsilon)\gg 0$ with $R_2(\epsilon)-R_1(\epsilon)\gg 0$ such that the restriction of $\tfrac{1}{\epsilon} u^\circ_\tau$ to $\{-R_2(\epsilon)<s< -R_1(\epsilon)\}$ has boundary on $\op{Graph} df_0$ and $\op{Graph} df_1$ and limits to $w_{\bf c(\tau), \bf d(\tau)}$ for some ${\bf c}(\tau)$ and ${\bf d}(\tau)$, modulo domain $\R$-translation of $w_{\bf c(\tau), \bf d(\tau)}$. 
\ee
We note that $(\bf c(\tau), \bf d(\tau))$ is roughly linear in $\tau-1/2$ when $\epsilon>0$.

Now suppose $\kappa=2$ for simplicity. In this case we may take $v$ to be rigid. (Also note that $u^\circ$ is always rigid.) For each $u^\circ_\tau$ there is a single connector $w_{\bf c(\tau),\bf d(\tau)}$ as in (SL4) that agrees with the end of $\tfrac{1}{\epsilon} u^\circ_\tau$ ``up to first order'', and we may adjust $\tau$ so that $w_{\bf c(\tau),\bf d(\tau)}$ agrees with $v$ up to first order.  The signed count of such $\tau$ is $1$.

Choosing one such $\tau$, we preglue the triple $(v,w=w_{\bf c(\tau),\bf d(\tau)}, u^\circ_{\tau})$ to $u_{(0)}$ and invert the error $\overline\bdry u_{(0)}$ as before.

The linearized $\overline\bdry$-operator  
$$D_w: W^{1,p}_\delta(w^*T\C^2) \to L^p_\delta(\Lambda^{0,1}_{\C} w^*T\C^2)$$
can be analyzed in the same way as in \eqref{eqn: Dw}. It has $2$-dimensional cokernel and, following \eqref{eqn: enlarged domain} and its surrounding paragraph, we enlarge the domain and obtain
$$\widetilde D_w: W^{1,p}_\delta(w^*T\C^2)\oplus \R\langle \rho_1, \rho_2 \rangle \to L^p_\delta(\Lambda^{0,1}_{\C} w^*T\C^2),$$
which is an isomorphism.

As before, $(\widetilde D_{w})^{-1}(\overline\bdry u_{(0)})$, which, after quotienting out a term corresponding to $s$-translation along the strip-like end of $u^\circ_\tau$, gives a term of the form 
$$\xi+ c(\rho_1 -\rho_2), \quad \xi\in W^{1,p}_\delta(w^*T\C^2), \quad c\in \R,$$
where $c(\rho_1-\rho_2)$ is the component in the normal direction.  We then adjust $u^\circ_{\tau}$ so its negative end agrees with $w+c(\rho_1+\rho_2)$.

\s
Putting all the steps together, we have:

\begin{theorem}
    \label{thm: Morse-Floer-gluing}
    Let $l_0\geq 0$, $\nu>0$ sufficiently large, and $\lambda>0$ sufficiently small.  For $\epsilon>0$ sufficiently small, there is a surjective degree $1$ map from $\sqcup_{\alpha,\beta\in\mathfrak{J}}\mathcal{T}^{l}_{\nu,\epsilon}(\mathbf{p}_\alpha,\mathbf{y},\mathbf{p}_\beta)$ to $\mathcal{P}_{\nu}^{l}(\mathbf{q},\mathbf{y},\mathbf{q})$ for all $0\leq l\leq l_0$.
\end{theorem}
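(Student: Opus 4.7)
The plan is to assemble the four steps already outlined in Section~\ref{subsection: the correspondence} into a bijective correspondence (up to sign) between the two moduli spaces, establishing both surjectivity and the degree $1$ count. I would define the map in the direction $\mathcal{P}_\nu^l \to \sqcup_{\alpha,\beta} \mathcal{T}^l_{\nu,\epsilon}$ by gluing, and then use the compactness theorem in the reverse direction to see that this map exhausts $\sqcup_{\alpha,\beta}\mathcal{T}^l_{\nu,\epsilon}$ for $\epsilon$ small enough.

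\textbf{Construction of the gluing map.} Start with a folded squid map $(u,\gamma)\in \mathcal{P}_{\nu}^l(\mathbf{q},\mathbf{y},\mathbf{q})$ satisfying (P1)--(P4), with underlying squid base $Y_{1,k,l-k,s}$. First apply Step~2 (Lemma~\ref{lemma: existence of ucirc}) to produce a unique nearby holomorphic curve $u^\circ: \dot F \to Y_1 \times T^*C$ with boundary on $\widetilde\Sigma^\epsilon_\nu$; this depends on small parameters $\mu$ and $\epsilon\in O(\mu^2)$. Next apply Step~3 (Theorem~\ref{thm: gluing along Y-shaped connectors}) to each squid leg $T_i$: the folded Morse tree $\gamma|_{G_i}$ is assembled from Type~(a), (b), (c) edge models and $Y$-shaped connectors into a virtually extendable holomorphic curve $v_i: \Theta_\lambda(T_i)\to \op{Sym}^\kappa(T^*C)$, with the obstruction bundle gluing yielding regularity and a degree~$1$ count against $\gamma|_{G_i}$. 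Finally apply Step~4 to attach each $v_i$ to $u^\circ$ at the points $\phi_{Y_1}(t_i)\in \partial_2 Y_1$ using the connectors $w_{\bf c,\bf d}$; the Newton iteration produces a unique holomorphic curve $v\in \mathcal{T}^l_{\nu,\epsilon}(\mathbf{p}_\alpha,\mathbf{y},\mathbf{p}_\beta)$ close to the pregluing of $(v_1,\dots,v_n,u^\circ)$, where $\alpha,\beta$ are determined by (B1).

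\textbf{Surjectivity and degree.} Given any $v\in \mathcal{T}^l_{\nu,\epsilon}(\mathbf{p}_\alpha,\mathbf{y},\mathbf{p}_\beta)$ with $\epsilon>0$ small enough, Theorem~\ref{thm: Morse-Floer-cpt} together with a diagonal argument in $\epsilon$ shows $v$ sits close to a unique limit $(u,\gamma)\in \mathcal{P}_\nu^{l'}(\mathbf{q},\mathbf{y},\mathbf{q})$ for some $l'\leq l$; the index formula and transversality force $l'=l$. This gives surjectivity. The degree $1$ count follows because each of the three gluing problems (Steps~2, 3, 4) is degree $1$: Step~2 is uniqueness from regularity and Gromov compactness; Step~3 is the degree calculation in the obstruction bundle gluing at each trivalent vertex; Step~4 is the signed count of $\tau$ for which the end of $\tfrac1\epsilon u^\circ_\tau$ matches the end of $v_i$ ``up to first order'', which is also $1$. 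Since all three maps are degree~$1$ and the assembly is via disjoint local gluings (Lemma~\ref{lemma-local diff} guarantees the parameter spaces of squid bodies and Riemann surfaces match locally), composition yields a degree~$1$ map.

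\textbf{Sign and orientations.} Signs on both sides are determined by $\pi^*\mathfrak{s}$ on $\Sigma\setminus\widetilde Z$ (equivalently via pushforward to $\Sigma^\epsilon_\nu$). Since the gluing construction identifies orientation data continuously across the parameter region of validity (cf.\ \cite{BaoZhu} and \cite{ekholm2007morse}), the signs on corresponding curves agree; combined with Lemma~\ref{lemma-morse-compare}, this will give the desired equality $\mathcal{F}_{\op{hol}}^{\nu,\epsilon}=\mathcal{F}_{\op{Mor}}^{\nu,\epsilon}$ modulo $\hbar^{l_0+1}$ and, in the limit $\nu\to\infty$, $\epsilon\to 0$ combined with Lemmas~\ref{lemma-limit-F} and~\ref{lemma-morse-compare}, Conjecture~\ref{conj-morse}.

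\textbf{Main obstacle.} The hard part is Step~3: making the obstruction bundle gluing at the $Y$-shaped connectors genuinely rigorous in the presence of both Type~(b) cusp vertices and Type~(c) diagonal vertices, while carefully tracking the Sobolev weights $\delta$ and the virtual extensions $\widetilde v_i$. In particular, verifying that the enlarged operator $\widetilde D_w$ in \eqref{eqn: enlarged domain} is an isomorphism uniformly as the gluing parameters degenerate, and that the obstruction section's zero-locus has degree $1$ in the $1$-parameter family of outgoing trajectories, requires estimates analogous to those of \cite{ekholm2007morse} and \cite{BaoZhu} but adapted to the symmetric product setting with diagonal intersections. The propagation of these estimates through the squid-leg gluing of Step~4 --- which is uniform in $\tau$ only because the rescaling by $1/\epsilon$ separates the scales of the squid body and the Morse legs --- is the other delicate point.
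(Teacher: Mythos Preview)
Your proposal is correct and follows essentially the same approach as the paper: the paper's proof of Theorem~\ref{thm: Morse-Floer-gluing} is precisely the assembly of Steps~1--4 of Section~\ref{subsection: the correspondence}, with Step~1 (Theorem~\ref{thm: Morse-Floer-cpt}) supplying the forward map and surjectivity, and Steps~2--4 supplying the gluing and the degree~$1$ count, just as you outline. Your identification of the main technical obstacle (the obstruction bundle gluing at $Y$-shaped connectors and its uniformity under the rescaling in Step~4) also matches the paper's emphasis.
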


By Theorem~\ref{thm: Morse-Floer-gluing}, $\mathcal{F}^{\nu,\epsilon}_{\mathrm{Mor}}=\mathcal{F}^\nu_{\mathrm{hol}}$ mod $\hbar^{l+1}$ for $\nu\gg 0$.
Therefore, by Lemmas \ref{lemma-limit-F} and \ref{lemma-morse-compare} and taking the limit $l\to\infty$, we conclude the proof sketch of Conjecture~\ref{conj-morse}.

\printbibliography

@misc{BaoZhu,
      title={From Morse trees to pseudo-holomorphic disk -- Y-graphs}, 
      author={Bao, Erkao and Zhu, Ke},
      note={in preparation},
}

@misc{EkholmShende2022Ghost,
      title={Ghost bubble censorship}, 
      author={Ekholm, Tobias and Shende, Vivek},
      year={2022},
      eprint={2212.05835},
      archivePrefix={arXiv},
}

@article {DoanWalpuski2023,
    AUTHOR = {Doan, Aleksander and Walpuski, Thomas},
     TITLE = {Counting embedded curves in symplectic 6-manifolds},
   JOURNAL = {Comment. Math. Helv.},
  FJOURNAL = {Commentarii Mathematici Helvetici. A Journal of the Swiss
              Mathematical Society},
    VOLUME = {98},
      YEAR = {2023},
    NUMBER = {4},
     PAGES = {693--769},
      ISSN = {0010-2571,1420-8946},
   MRCLASS = {53D05 (14N35 53C15)},
  MRNUMBER = {4680501},
       DOI = {10.4171/cmh/556},
       URL = {https://doi.org/10.4171/cmh/556},
}

@article {CorneaLalonde2006cluster,
    AUTHOR = {Cornea, Octav and Lalonde, Fran\c{c}ois},
     TITLE = {Cluster homology: an overview of the construction and results},
   JOURNAL = {Electron. Res. Announc. Amer. Math. Soc.},
  FJOURNAL = {Electronic Research Announcements of the American Mathematical
              Society},
    VOLUME = {12},
      YEAR = {2006},
     PAGES = {1--12},
      ISSN = {1079-6762},
   MRCLASS = {53D40 (53D12)},
  MRNUMBER = {2200949},
MRREVIEWER = {Tobias\ Ekholm},
       DOI = {10.1090/S1079-6762-06-00154-5},
       URL = {https://doi.org/10.1090/S1079-6762-06-00154-5},
}

@article {BiranCornea2009,
    AUTHOR = {Biran, Paul and Cornea, Octav},
     TITLE = {Rigidity and uniruling for {L}agrangian submanifolds},
   JOURNAL = {Geom. Topol.},
  FJOURNAL = {Geometry \& Topology},
    VOLUME = {13},
      YEAR = {2009},
    NUMBER = {5},
     PAGES = {2881--2989},
      ISSN = {1465-3060,1364-0380},
   MRCLASS = {53D12 (53D35 53D40)},
  MRNUMBER = {2546618},
MRREVIEWER = {Tobias\ Ekholm},
       DOI = {10.2140/gt.2009.13.2881},
       URL = {https://doi.org/10.2140/gt.2009.13.2881},
}

@misc{CantChen2023adiabatic,
      title={Adiabatic compactness for holomorphic curves with boundary on nearby Lagrangians}, 
      author={Cant, Dylan and Chen, Daren},
      year={2023},
      eprint={2302.13391},
      archivePrefix={arXiv},
      primaryClass={math.SG}
}

@article {OhZhu2024adiabatic,
    AUTHOR = {Oh, Yong-Geun and Zhu, Ke},
     TITLE = {Partial collapsing degeneration of {F}loer trajectories and
              adiabatic gluing},
   JOURNAL = {Acta Math. Sin. (Engl. Ser.)},
  FJOURNAL = {Acta Mathematica Sinica (English Series)},
    VOLUME = {40},
      YEAR = {2024},
    NUMBER = {1},
     PAGES = {161--249},
      ISSN = {1439-8516,1439-7617},
   MRCLASS = {53D40 (53D12 53D45)},
  MRNUMBER = {4685228},
MRREVIEWER = {Johan\ Asplund},
       DOI = {10.1007/s10114-024-2234-y},
       URL = {https://doi.org/10.1007/s10114-024-2234-y},
}

@misc{Neitzke2021notes,
  author        = {Neitzke, Andrew},
  title         = {Notes on spectral networks},
  year          = {2021},
}

@article {colin2020applications,
    AUTHOR = {Colin, Vincent and Honda, Ko and Tian, Yin},
     TITLE = {Applications of higher-dimensional {H}eegaard {F}loer homology
              to contact topology},
   JOURNAL = {J. Topol.},
  FJOURNAL = {Journal of Topology},
    VOLUME = {17},
      YEAR = {2024},
    NUMBER = {3},
     PAGES = {Paper No. e12349, 77},
      ISSN = {1753-8416,1753-8424},
   MRCLASS = {57K43 (53D10 53D40)},
  MRNUMBER = {4775502},
       DOI = {10.1112/topo.12349},
       URL = {https://doi.org/10.1112/topo.12349},
}

@article {fukaya1997zero,
    AUTHOR = {Fukaya, Kenji and Oh, Yong-Geun},
     TITLE = {Zero-loop open strings in the cotangent bundle and {M}orse
              homotopy},
   JOURNAL = {Asian J. Math.},
  FJOURNAL = {Asian Journal of Mathematics},
    VOLUME = {1},
      YEAR = {1997},
    NUMBER = {1},
     PAGES = {96--180},
      ISSN = {1093-6106},
   MRCLASS = {58E05 (58D29 58F09)},
  MRNUMBER = {1480992},
MRREVIEWER = {Joa Weber},
       DOI = {10.4310/AJM.1997.v1.n1.a5},
       URL = {https://doi.org/10.4310/AJM.1997.v1.n1.a5},
}

@article {abouzaid2012wrapped,
    AUTHOR = {Abouzaid, Mohammed},
     TITLE = {On the wrapped {F}ukaya category and based loops},
   JOURNAL = {J. Symplectic Geom.},
  FJOURNAL = {The Journal of Symplectic Geometry},
    VOLUME = {10},
      YEAR = {2012},
    NUMBER = {1},
     PAGES = {27--79},
      ISSN = {1527-5256},
   MRCLASS = {53D37 (53D12)},
  MRNUMBER = {2904032},
MRREVIEWER = {Janko Latschev},
       URL = {http://projecteuclid.org/euclid.jsg/1332853049},
}

@misc{ekholm2021skeins,
      title={Skeins on branes}, 
      author={Ekholm, Tobias and Shende, Vivek},
      year={2019},
      eprint={1901.08027},
      archivePrefix={arXiv},
      primaryClass={math.SG}
}

@article {morton2021dahas,
    AUTHOR = {Morton, Hugh and Samuelson, Peter},
     TITLE = {D{AHA}s and skein theory},
   JOURNAL = {Comm. Math. Phys.},
  FJOURNAL = {Communications in Mathematical Physics},
    VOLUME = {385},
      YEAR = {2021},
    NUMBER = {3},
     PAGES = {1655--1693},
      ISSN = {0010-3616},
   MRCLASS = {57K14 (17B37)},
  MRNUMBER = {4283999},
       DOI = {10.1007/s00220-021-04052-8},
       URL = {https://doi.org/10.1007/s00220-021-04052-8},
}

@article{honda2024jems,
    title={Higher-dimensional Heegaard Floer homology and Hecke algebras},
    author={Honda, Ko and Tian, Yin and Yuan, Tianyu},
    journal={J. Eur. Math. Soc. (JEMS)},
    year={2024},
    DOI = {10.4171/JEMS/1525}
}

@article{fukaya1996morse,
  title={Morse homotopy and its quantization},
  author={Fukaya, Kenji},
  journal={Geometric topology (Athens, GA, 1993)},
  volume={2},
  pages={409--440},
  year={1996}
}

@article {floer1988morse,
    AUTHOR = {Floer, Andreas},
     TITLE = {Morse theory for {L}agrangian intersections},
   JOURNAL = {J. Differential Geom.},
  FJOURNAL = {Journal of Differential Geometry},
    VOLUME = {28},
      YEAR = {1988},
    NUMBER = {3},
     PAGES = {513--547},
      ISSN = {0022-040X},
   MRCLASS = {58F05 (35J65 58E05)},
  MRNUMBER = {965228},
MRREVIEWER = {Jean-Claude Sikorav},
       URL = {http://projecteuclid.org/euclid.jdg/1214442477},
}

@article {neitzke2020q,
    AUTHOR = {Neitzke, Andrew and Yan, Fei},
     TITLE = {{$q$}-nonabelianization for line defects},
   JOURNAL = {J. High Energy Phys.},
  FJOURNAL = {Journal of High Energy Physics},
      YEAR = {2020},
    NUMBER = {9},
     PAGES = {153, 65},
      ISSN = {1126-6708,1029-8479},
   MRCLASS = {81T40 (81R50 81T60)},
  MRNUMBER = {4190271},
       DOI = {10.1007/jhep09(2020)153},
       URL = {https://doi.org/10.1007/jhep09(2020)153},
}

@article {ekholm2007morse,
    AUTHOR = {Ekholm, Tobias},
     TITLE = {Morse flow trees and {L}egendrian contact homology in 1-jet
              spaces},
   JOURNAL = {Geom. Topol.},
  FJOURNAL = {Geometry \& Topology},
    VOLUME = {11},
      YEAR = {2007},
     PAGES = {1083--1224},
      ISSN = {1465-3060,1364-0380},
   MRCLASS = {53D40 (57R17)},
  MRNUMBER = {2326943},
MRREVIEWER = {Dragomir\ L.\ Dragnev},
       DOI = {10.2140/gt.2007.11.1083},
       URL = {https://doi.org/10.2140/gt.2007.11.1083},
}

@misc{nho2023family,
      title={Family Floer theory, non-abelianization, and spectral networks}, 
      author={Yoon Jae Nho},
      year={2023},
      eprint={2307.04213},
      archivePrefix={arXiv},
      primaryClass={math.SG}
}

@misc{yuan2023folded,
      title={Folded Morse flow trees}, 
      author={Tianyu Yuan},
      year={2023},
      eprint={2309.05976},
      archivePrefix={arXiv},
      primaryClass={math.SG}
}

@misc{casals2025spectralnetworksbettilagrangians,
      title={Spectral networks and Betti Lagrangians}, 
      author={Roger Casals and Yoon Jae Nho},
      year={2025},
      eprint={2504.08144},
      archivePrefix={arXiv},
      primaryClass={math.SG}
}

@article {gaiotto2013,
    AUTHOR = {Gaiotto, Davide and Moore, Gregory W. and Neitzke, Andrew},
     TITLE = {Spectral networks},
   JOURNAL = {Ann. Henri Poincar\'e},
  FJOURNAL = {Annales Henri Poincar\'e. A Journal of Theoretical and
              Mathematical Physics},
    VOLUME = {14},
      YEAR = {2013},
    NUMBER = {7},
     PAGES = {1643--1731},
      ISSN = {1424-0637,1424-0661},
   MRCLASS = {32G15 (58D27 81Q20 81T45)},
  MRNUMBER = {3115984},
MRREVIEWER = {Kotik\ K.\ Lee},
       DOI = {10.1007/s00023-013-0239-7},
       URL = {https://doi.org/10.1007/s00023-013-0239-7},
}

@article {casals22,
    AUTHOR = {Casals, Roger and Zaslow, Eric},
     TITLE = {Legendrian weaves: {$N$}-graph calculus, flag moduli and
              applications},
   JOURNAL = {Geom. Topol.},
  FJOURNAL = {Geometry \& Topology},
    VOLUME = {26},
      YEAR = {2022},
    NUMBER = {8},
     PAGES = {3589--3745},
      ISSN = {1465-3060,1364-0380},
   MRCLASS = {53D35 (57R17)},
  MRNUMBER = {4562568},
MRREVIEWER = {Quach thi C\^am V\^an},
       DOI = {10.2140/gt.2022.26.3589},
       URL = {https://doi.org/10.2140/gt.2022.26.3589},
}

@misc{ekholm2025skeintracescurvecounting,
      title={Skein traces from curve counting}, 
      author={Tobias Ekholm and Pietro Longhi and Sunghyuk Park and Vivek Shende},
      year={2025},
      eprint={2510.19041},
      archivePrefix={arXiv},
      primaryClass={math.SG},
      url={https://arxiv.org/abs/2510.19041}, 
}

\end{document}